\documentclass[reqno,11pt]{amsart}
\usepackage{amssymb,latexsym}
\usepackage{amssymb,amsmath,amsthm}
\usepackage{amsfonts}
\usepackage{latexsym}
\usepackage[CJKbookmarks=true]{hyperref} 
\usepackage{color}

\numberwithin{equation}{section}

\def\pa{\partial}

\newcommand{\R}{\mathbb{R}}

\newcommand{\norm}[1]{\left\| #1\right\|}

\newcommand{\Z}{\mathbb{Z}}

\newcommand{\les}{\lesssim}

\newcommand{\tc}{\tilde \chi}

\newcommand{\ppj}{(\phi_0^j,\phi^j_1)}

\newcommand{\BB}{\dot{B}^3_{1,1}\times\dot{B}^2_{1,1}}

\newcommand{\bb}{\dot{B}^{1/2}_{2,1}\times\dot{B}^{-1/2}_{2,1}}

\newcommand{\bt}{\dot{B}^{1/2}_{2,1}}

\newcommand{\uu}{(u,u_t)}

\newcommand{\RR}{(\tilde{R}_{0,n}^N, \tilde{R}_{1,n}^N )}

\newcommand{\uui}{(u_0,u_1)}
\newcommand{\tq}{\tilde{q}}
\newcommand{\tr}{\tilde{r}}

\newcommand{\tu}{\tilde{u}}

\newcommand{\tv}{\tilde{v}}

\newcommand{\tw}{\tilde{w}}

\newcommand{\bu}{\bar{u}}
\newcommand{\bv}{\bar{v}}
\newcommand{\bw}{\bar{w}}

\newcommand{\uuin}{(u_0^n,u_1^n)}

\newcommand{\HL}{\dot{H}^1\times L^2}
\newcommand{\vp}{\varphi}

\DeclareMathOperator*{\esssup}{ess\,sup}

\newcommand{\ra}{\rightarrow}

\newcommand{\ep}{\epsilon}

\newcommand{\vps}{\varepsilon}

\textheight 22.4cm \topmargin -0cm \leftmargin 0.1cm \marginparwidth 0mm
\textwidth 17cm \hsize \textwidth \advance \hsize by
-\marginparwidth \oddsidemargin -9mm \evensidemargin \oddsidemargin

\advance\hoffset by 5mm

\def\@abssec#1{\vspace{.05in}\footnotesize \parindent .2in
{\bf #1. }\ignorespaces}

\newcommand{\HH}{\dot{H}^{1/2}\times\dot{H}^{-1/2}}

\newtheorem{theorem}{Theorem}
\newtheorem{lemma}{Lemma}[section]
\newtheorem{proposition}[lemma]{Proposition}

\newtheorem{remark}[lemma]{Remark}

\theoremstyle{definition}%
\newtheorem{definition}{Definition}

\newcommand{\beq}{\begin{equation}}

\newcommand{\eeq}{\end{equation}}

\theoremstyle{remark}

\makeatletter
\newcommand{\Extend}[5]{\ext@arrow0099{\arrowfill@#1#2#3}{#4}{#5}}
\makeatother

\begin{document}
\title[$5$D NLW in Besov spaces]{The global well-posedness and scattering for the $5$D defocusing conformal invariant NLW  with radial initial data in a critical Besov space}

\author[Miao]{Changxing Miao}
\address{\hskip-1.15em Changxing Miao:
\hfill\newline Institute of Applied Physics and Computational
Mathematics, \hfill\newline P. O. Box 8009,\ Beijing,\ China,\
100088,} \email{miao\_changxing@iapcm.ac.cn}

\author[Yang]{Jianwei Yang}
\address{\hskip-1.15em Jianwei Yang:
 \hfill\newline Beijing International Center for Mathematical Research, Peking University, Beijing 100871, China }
\email{geewey\_{}young@pku.edu.cn}

\author[Zhao]{Tengfei Zhao}
\address{\hskip-1.15em Tengfei Zhao:
 \hfill\newline The Graduate School of China Academy of Engineering Physics, \hfill\newline P. O.
Box 2101,\ Beijing,\ China,\ 100088, } \email{zhao\_tengfei@yeah.net}

\subjclass[2000]{Primary: 35B40, 35Q40.}

\keywords{ Nonlinear wave equation, Strichartz estimates,  Scattering,   hyperbolic coordinates, Morawetz esimates}

\begin{abstract}

In this paper, we  obtain the global
well-posedness and scattering for the radial solution to the defocusing conformal invariant nonlinear
wave equation with initial data in the critical Besov space $\BB(\R^5)$.
This is the five dimensional analogue of \cite{dodson-2016}, which is
the first result on the global well-posedness and scattering of the
energy subcritical nonlinear wave equation without the uniform boundedness
assumption on the critical Sobolev norms employed as a substitute of the
missing conservation law with respect to the scaling invariance of the equation.
The proof is based on exploiting the structure of the radial solution,
developing the Strichartz-type estimates and incorporation of
the strategy in \cite{dodson-2016}, where we also avoid a logarithm-type loss
by employing the inhomogeneous Strichartz estimates.

\end{abstract}

 \maketitle

\section{Introduction}
\noindent We consider the solutions $u$ to 
\beq
\label{NLW-p}
\left\{
 \begin{aligned}
                 & \pa_{tt}u-\Delta u+\mu |u|^pu =\,0,\,\,\,\,\,\,\, (t,x)\in \R\times\R^d, \\
              & (u(0), \pa_t u(0)) =\, \uui,\;\;\;\;x\in \R^d,
  \end{aligned}\right.
\eeq
where $\mu=\pm1 $, $d\geq1$,  and $p>0$. 
If $\mu=1$ the equation \eqref{NLW-p} is described as defocusing,  otherwise focusing.
There is a natural scaling symmetry for \eqref{NLW-p}, i.e.,
if we let 
$
u_\lambda(t,x)=\lambda^{\frac{2}{p}}u(\lambda t,\lambda x)
$ for $\lambda>0$,
then $u_\lambda$ 
is also a solution to the equation \eqref{NLW-p} with initial
data $(\lambda^{\frac2p}u_0(\lambda x), \lambda^{\frac2p+1}u_1(\lambda x))$
preserving  the $\dot{H}^{s_p}\times \dot{H}^{s_p-1}(\R^d)$ norm of the initial data,
 %
where we define  the critial regulairity as $s_p=\frac d2-\frac{2}{p}$.
At least, the solutions to \eqref{NLW-p} formally  conserve the energy
\beq\label{Energy-p}
E(u(t), \pa_t u(t)):=\,  \frac12\int_{\R^5} |\nabla_x u(t) |^2 dx  +
 \frac12\int_{\R^5}|\pa_{t}u (t)|^2 dx +\frac{\mu}{p+2}\int_{\R^5}|u(t)|^{p+2} dx,
\eeq
which is also invariant under the scaling if $s_p=1$.
In view of this, we say the Cauchy problem  \eqref{NLW-p} is energy critical when $s_p=1$,
subcritical for $s_p<1$ and  supercritical when $s_p>1$.

Lindblad and Sogge \cite{lindblad1995on} proved the local theory of the Cauchy
problem \eqref{NLW-p} in the minimal regularity spaces. In fact, 
if $d\geq 2$ and $p\geq \frac{d+3}{d-1}$, the
Cauchy problem \eqref{NLW-p}
with initial data in the critical spaces
$\dot{H}^{s_p}\times \dot{H}^{s_p-1}(\R^d)$ is locally well-posed.
The global theory for the Cauchy problem \eqref{NLW-p} with $\mu=1$ and $s_p\leq 1$
has been studied extensively.
While for the focusing case, even the solution with smooth
initial data may  blow up at finite time. 
For more related results, we refer the reader
to \cite{sogge1995lectures} and the references therein.

In this paper, we will consider global existence and scattering
of the solutions to \eqref{NLW-p}.
In general, a solution $u$ is said to be scattering,
if it is a global solution and approaches a linear
solution 
as $t\ra\pm\infty$.
%
%
%
In the cases of  $d\geq 2$ and $p\geq \frac{d+3}{d-1}$, the solution to
\eqref{NLW-p}  with  small initial datum in the  critical Sobolev spaces
is globally well-posed and scattering, see \cite{lindblad1995on}. 

For the defocusing energy critical wave equation \eqref{NLW-p},
Grillakis \cite{grillakis1990regularity}  first  established
the global existence theory for classical solution  
when $d=3$.
 The results for  other dimensions are proved in  \cite{grillakis1992regularity, shatah1993regularity}.
scattering results for large energy data are proved in
\cite{BG-1999-AJM, BAHOURI1998DECAY, nakanishi1999unique} %
by establishing variants of the
Morawetz estimates
(see Morawetz \cite{morawetz1968time})
\beq\label{morawetz-sp=1}
\iint_{\R^{1+d} } \frac{|u|^{\frac{2d}{d-2}}}{|x|} dx dt \leq C_d E(u_0,u_1),
\eeq
where  $C_d$ is a  
constant depending on $d$.  
%
%
%
%
While in focusing energy critical cases, the Morawetz estimates
\eqref{morawetz-sp=1} fails.  And the scattering results do not
hold in general, since \eqref{NLW-p} has a ground state
$W(x)=\left(1+\frac{|x|^2}{d(d-2)} \right)^{-\frac{d-2}{2}}. $
In the cases of   $3\leq d \leq 5$, Kenig-Merle \cite{kenig2008global}
proved the scattering result for solution
with initial data  such that $E(u_0, u_1) < E(W,0)$ and $\norm{u_0}_{ \dot{H}^1 (\R^d)} < \norm{W}_{ \dot{H}^1 (\R^d)}$.
%
In their proofs, the main ingredient is the concentration compactness/rigidity
theorem method introduced by \cite{kenig2006global}.
This method is 
powerful and  plays an important role in study of
many other nonlinear dispersive equations.
We refer to \cite{killip2009nonlinear, Koch2014Dispersive, Kenig2015Lectures}
and the references therein.

%

For the defocusing subcritical equation \eqref{NLW-p},
the global existence has been proved for solution  with initial
data in the energy space $\HL(\R^d)$ by  Ginibre-Velo  \cite{Ginibre1985the, ginibre1989the}. 
However, there are no scattering results even for solutions with  initial datum in 
$(\dot H^1 \cap  \dot H^{s_p}) \times (L^2 \cap \dot H^{s_p-1})(\R^d)$.

Recently, Dodson  \cite{dodson-2016} proved  scattering results
for the defocusing cubic wave equation with
the initial datum belonging to the space
 $\dot B^{2}_{1,1} \times \dot B^{1}_{1,1}(\R^3)$,
which
is a subspace of $\HH(\R^3)$.
We remark that this is the first work
that gives scattering result for large data in the critical Sobolev space
without any a priori bound on the critical norm of the solution.  Dodson's strategy  
 consists of three steps: 
  \begin{enumerate}
    \item[(1)]  By establishing some new Strichartz-type estimates, 
    one can show that the solution  is in the energy space $\HL(\R^3)$ up to some 
    free evolutions.
       Then this decomposition enables one to prove the global well-posedness of the solution.

    \item[(2)]To obtain the scattering result, a conformal transformation is applied to
              show that the energy part of the solution has  finite energy in hyperbolic coordinates.
              Then from the conformal invariance of the equation and  a Morawetz-type inequality,  one can deduce that $\|u\|_{ L^4_{t,x}(\R\times \R^5)}\leq C(\norm{\uui}_{\BB(\R^5)},\delta_1),$
              where the parameter $\delta_1$ relies on the scaling and spatial profiles of the initial data.
    \item[(3)]   Finally, one can remove the dependence of $\delta_1$ by employing the profile
              decomposition, 
              which completes the proof. 
  \end{enumerate}


Let $S(t)(f,g)$ be the solution of Cauchy problem to the free wave equation
\beq
\label{FNLW}
\left\{
 \begin{aligned}
                 &\pa_{tt}v-\Delta v=0,\quad\,\qquad\qquad\;\, (t,x)\in \R\times \R^5, \\
               &(v, \pa_t v)\big|_{t=0} = (f(x), g(x)), \;\;\; x\in \R^5.
  \end{aligned}\right.
\eeq
For the sake of statement,  we introduce the  following notation as
$$\dot S(t)(f,g)\triangleq\pa_tS(t)(f,g), \;\;\text{and}\;\; \vec S(t)(f,g)\triangleq\big(S(t)(f,g),\dot S(t)(f,g)\big).$$
We consider the Cauchy problem of nonlinear wave equation
\beq
\label{NLW}
\left\{
 \begin{aligned}
                   & \pa_{tt}u-\Delta u+|u|u =\,0,\,\,\,\,\,\,\, (t,x)\in \R\times\R^d, \\
              & (u(0), \pa_t u(0)) =\, \uui,\;\;\;\;\;x\in \R^d,
  \end{aligned}\right.
\eeq
Our main result can be stated as:

\begin{theorem}\label{main-theorem}
For any radial initial  data $\uui\in\BB(\R^5)$, the solution $u$ to \eqref{NLW} is
globally well-posed and scattering, i.e.,
there exists $(u^\pm_{0},u_1^\pm)\in\HH(\R^5)$ such that
\beq\label{sct-def} 
\lim\limits_{t\ra \pm\infty}\norm{\left(u(t),\pa_t u(t)\right)-\vec S(t)(u^\pm_0,u^\pm_1)}_{\dot H^\frac12  \times \dot H^{-\frac12} (\R^5)}     \,\ra\, 0.
\eeq
Furthermore, there is a
function $A:[0,\infty)\ra [0,\infty)$, such that
\beq\label{u-L3}
\norm{u}_{L^3_{t,x}(\R\times\R^5)} \leq A(\norm{\uui}_{\BB(\R^5)}).
\eeq
\end{theorem}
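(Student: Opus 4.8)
The plan is to follow the three-step scheme of \cite{dodson-2016}, carried out in the five‑dimensional conformally invariant setting -- note that the defocusing power $|u|u$ in $\R^{1+5}$ is the exact analogue of the defocusing cubic power $|u|^2u$ in $\R^{1+3}$, both being conformally invariant -- while exploiting the radial hypothesis and refined Strichartz estimates to operate at the scaling-critical regularity $s_p=\tfrac12$, i.e.\ in $\HH(\R^5)$.

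\emph{Preliminaries: local and stability theory.} Using the dispersive bound $\norm{S(t)(f,g)}_{L^\infty_x(\R^5)}\les|t|^{-2}\big(\norm{f}_{\B}+\norm{g}_{\dot B^2_{1,1}}\big)$ for the free wave flow -- which is precisely why $\BB(\R^5)$ is the natural data space in dimension five -- together with the associated Strichartz estimates, and crucially the \emph{inhomogeneous} Strichartz estimates (which let one close the bootstrap without the logarithmic loss a purely homogeneous argument would incur), one obtains local well-posedness and a perturbation/stability theory for \eqref{NLW} in $\BB(\R^5)$. For radial data these estimates are sharpened by the radial Sobolev (Strauss-type) embedding, giving pointwise decay away from the spatial origin; this is what ultimately permits the argument to run at the critical regularity. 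Granting this, \eqref{sct-def} and global existence reduce by standard arguments to the a priori bound \eqref{u-L3}, to which the remainder is devoted.

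\emph{Step 1: energy-space decomposition and global well-posedness.} Decomposing the data and the solution by frequency, write $u=u_\ell+w$ where $u_\ell$ is a superposition of free waves carrying the rough part of the data and $w(t)\in\HL(\R^5)$ has finite, at most controlled, energy growth. The point is that the Duhamel forcing $|u|u$, evaluated on the linear evolution of data in $\BB(\R^5)$, lies in a space that the energy estimate can absorb, owing to the $|t|^{-2}$ dispersive decay and the radial gain. Combined with the local theory and the global theory for energy-space data, this yields global well-posedness.

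\emph{Step 2: finite energy in hyperbolic coordinates and a conditional Morawetz bound.} Apply the conformal (Kelvin-type) change of variables $u(t,x)\mapsto\tu(s,y)$ carrying the interior of the forward light cone to a region of $\R\times\mathbb{H}^5$; since $|u|u$ is conformally invariant, $\tu$ solves the corresponding defocusing equation there. Using the splitting of Step 1 one shows that $\tu$ has finite energy in these coordinates, with a bound depending on $\norm{\uui}_{\BB(\R^5)}$ and on a parameter $\delta_1$ measuring the scale and spatial profile at which the data concentrate. A Morawetz-type inequality for the transformed equation, together with the conformal invariance, then converts this into $\norm{u}_{L^3_{t,x}(\R\times\R^5)}\le C\big(\norm{\uui}_{\BB(\R^5)},\delta_1\big)$.

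\emph{Step 3: removing the $\delta_1$-dependence.} If \eqref{u-L3} failed there would be a sequence $\uuin$ with $\sup_n\norm{\uuin}_{\BB(\R^5)}<\infty$ and $\norm{u^n}_{L^3_{t,x}}\to\infty$. Applying a linear profile decomposition adapted to $\BB(\R^5)$, each profile carries a controlled $\delta_1$, so Step 2 bounds its nonlinear evolution, and the stability theory of Step 1 lets one superpose the profiles to bound $\norm{u^n}_{L^3_{t,x}}$ uniformly -- a contradiction. This establishes \eqref{u-L3} and completes the proof.

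The main obstacle is Step 2: one must track precisely how the conformal transformation acts on the decomposition $u=u_\ell+w$ -- in particular control the contribution of the \emph{free} part $u_\ell$ to the energy of $\tu$, which is not finite a priori -- and then run the Morawetz estimate for the transformed equation at the critical regularity, where the five-dimensional Strichartz/Sobolev bookkeeping and the need to avoid logarithmic losses (hence the inhomogeneous estimates) make the analysis delicate. A secondary difficulty is carrying out the profile decomposition in the Besov space $\BB(\R^5)$ rather than in the Hilbert space $\HH(\R^5)$.
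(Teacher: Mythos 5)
Your proposal follows the paper's three-step strategy in broad outline, but it contains two concrete confusions in Steps 2 and 3 that would have to be corrected before the argument could actually close.

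In Step 2 you write that ``one shows that $\tu$ has finite energy in these coordinates,'' and then in your final paragraph you note (correctly) that the contribution of the free part to the hyperbolic energy of $\tu$ is not finite a priori. These two sentences cannot both stand: the decaying part $v$ lives only in $\dot B^{1/2}_{2,1}\times\dot B^{-1/2}_{2,1}$, so $\tv$ genuinely has infinite hyperbolic energy and there is nothing to ``control.'' The resolution is to run the finite-energy and Morawetz arguments entirely for $\tw$, not $\tu$: one shows $E_h(\tw)(\tau_0)<\infty$ at some slice $\tau_0$, propagates a uniform bound on $E_h(\tw)$ in $\tau$ by a Gronwall argument, and studies the Morawetz action $M(\tau)=\int_0^\infty(s^2\tw)_\tau(s^2\tw)_s\,ds$. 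Since $\tw$ solves a wave equation whose forcing is $|\tu|\tu$ rather than $|\tw|\tw$, the decay estimate on $\tv$ is used to absorb the difference $|\tw|\tw-|\tu|\tu$; that is the mechanism by which the free part is handled, not by estimating its hyperbolic energy.

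In Step 3 you propose a ``linear profile decomposition adapted to $\BB(\R^5)$'' and list this as a secondary difficulty. This misidentifies where the Besov hypothesis enters. One uses the standard $\HH(\R^5)$ profile decomposition (Ramos) on the sequence $\uuin$, which is bounded in $\HH$ via the embedding $\BB(\R^5)\hookrightarrow\HH(\R^5)$. The Besov bound is used at a single, specific point: via the dispersive estimate it forces $|t_j^n\lambda_j^n|$ to remain bounded for any nontrivial profile (since $\|\vec S(t_0)\uui\|_{\BB(\R^5)}\to\infty$ as $|t_0|\to\infty$ whenever $\uui\neq 0$), so the time translations can be absorbed into the profiles. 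After that, the weak-limit characterization of the profiles and Fatou's lemma carry the uniform $\BB(\R^5)$ bound to each $(\vp_0^j,\vp_1^j)$, so the $\delta_1$-dependent conclusion of Step 2 applies to each of the finitely many large profiles, and perturbation theory in the usual $\HH(\R^5)$ framework finishes the argument. No $\BB$-adapted profile decomposition is needed or used.
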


\begin{remark}

 1.   This theorem extends the results of  \cite{dodson-2016} to the $5$-dimension case.
  Though the proof will utilize the strategy given in \cite{dodson-2016}, but it is highly nontrivial.
  \vskip 0.2cm
2. Unlike the $3$D case, the dispersive estimate(see \eqref{Dsp-est})
gives a decay in time of order $-2$, which may cause a logarithmic failure
when one estimates
$$\norm{u}_{L^{\frac43}_t L^4_x(J\times\R^5)}\,+\,\sup\limits_{t\in J}
\left( t^{\frac34}\norm{u}_{L^4_x(\R^5)} \right),$$
where $0\in J$ is a local time interval. We circumvent this difficulty by using the inhomogenous Strichartz
estimates in \cite{taggart2008inhomogeneous} and  prove
the global well-posedness of $u$.
  \vskip 0.2cm

3.    For the scattering result, by reductions, we need to bound the $L^3_{t,x}$ of $w$
on the light cone $\{|x| \leq t+\frac12\}.$
We will define the hyperbolic energy by rewriting the equation \eqref{NLW} as the form
\begin{equation*}
\pa_{tt}(r^2w) -\pa_{rr}(r^2w)=-2w-r^2 |w|w.
\end{equation*}
Observing that the additional term $2w$ and
the nonlinear term  $r^2 |w|w$ enjoy same sign,
we can bound the $L^3_{t,x}$ norm of $w$ by applying a Morawetz-type inequality,
if we assume the hyperbolic energy of $w$ is bounded.
  \vskip 0.2cm

4. To certifying  the above assumption,  we will make full use of the
formula \eqref{SL-formula-radial} for radial solution and the sharp Hardy inequality.
In contrast to the $3D$ case, 
some terms in formula \eqref{SL-formula-radial} seem more difficult to dealt with.
However, the integration domains of these terms  
are symmetric about the radius $r$, 
which also consists with the Huygens
principle.
This fact allows us  
apply the Hardy-Littlewood maximal functions to verifying the assumption. 
\end{remark}

\vskip0.4cm
Now, we give the outline of the proof.
By the Strichartz estimates and a standard
fix point argument, for initial data $\uui$,
there exists a maximal time interval $I\subset \R$ such that there exists a unique solution $u$ (see Definition \ref{solution} in Section 2)
to the equation \eqref{NLW}
on $I\times \R^5$.
We consider the global well-posedness by
developing some Strichartz-type estimates \eqref{2r-5/r,r}.
Utilizing the standard blowup criterion, 
we can show the global well-posedness of $u$.


Next, we  claim  the following proposition. 

\begin{proposition}\label{u-3-bounded}
For every radial initial data $\uui\in\BB(\R^5)$,  let $u$ be the corresponding solution to \eqref{NLW}.
Then there exists a parameter $\delta_1$ depending the initial data $\uui$ and  a function $A:[0,\infty)^2\ra [0,\infty)$
such that
 \beq\label{u-scatter}
 \norm{u}_{L^3_{t,x}(\R\times\R^5)} \leq A(\norm{\uui}_{\BB(\R^5)},\delta_1).
 \eeq
\end{proposition}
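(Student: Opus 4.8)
The plan is to follow the three-step scheme of \cite{dodson-2016}, adapted to dimension five. The global well-posedness of $u$ being already in hand (from the Strichartz-type estimates \eqref{2r-5/r,r} and the standard blow-up criterion), what remains is to produce the a priori spacetime bound \eqref{u-scatter}. The first reduction is geometric: by finite speed of propagation and the strong Huygens principle, together with the fact that the region exterior to a forward light cone can be controlled purely by the small-data/Strichartz theory attached to \eqref{2r-5/r,r}, it is enough to bound $\|w\|_{L^3_{t,x}}$ on the interior of a light cone $\mathcal{C}=\{(t,x):|x|\le t+\tfrac12,\ t\ge 0\}$ (and symmetrically for $t\le 0$), where $w$ denotes the solution restricted to $\mathcal{C}$. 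It is in carrying out this step --- decomposing $u$ on $\mathcal{C}$ as a free wave $S(t)(\psi_0,\psi_1)$ of small critical norm plus a remainder of finite energy, the smallness being quantified by a parameter $\delta_1$ that records the scaling and spatial concentration of $\uui$ relative to the fixed cone --- that the dependence on $\delta_1$ in \eqref{u-scatter} enters; the finite-energy remainder will have energy $\le A_0(\|\uui\|_{\BB},\delta_1)$ for a suitable function $A_0$.

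Next I would pass to hyperbolic coordinates inside $\mathcal{C}$. Since $|u|u$ is the conformal nonlinearity in $d=5$ (the conformal power is $p=\tfrac{4}{d-1}=1$), the conformal change of variables recasts \eqref{NLW} as a nonlinear wave equation on a rescaled copy of hyperbolic space; equivalently, for radial $w$ one rewrites \eqref{NLW} as $\partial_{tt}(r^2w)-\partial_{rr}(r^2w)=-2w-r^2|w|w$. One then introduces the hyperbolic energy $E_{\mathrm{hyp}}(\tau)$ of $w$ on the hyperboloids $\{(t+\tfrac12)^2-|x|^2=\tau^2\}$. Because the linear term $-2w$ and the nonlinear term $-r^2|w|w$ have the same, defocusing, sign, the energy identity contains no dangerous contribution and $E_{\mathrm{hyp}}(\tau)$ remains bounded by its value on the initial hyperboloid.

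The heart of the matter is then to show that this initial hyperbolic energy is finite and quantitatively controlled, $E_{\mathrm{hyp}}\le A_1(\|\uui\|_{\BB},\delta_1)$. For this I would use the explicit representation formula \eqref{SL-formula-radial} for radial solutions of \eqref{NLW}, which expresses $w$ and its relevant first derivatives on the initial hyperboloid through the data and a spacetime integral of the nonlinearity, together with the sharp Hardy inequality. This is the step I expect to be the main obstacle, and the place where five dimensions genuinely departs from three: several of the terms in \eqref{SL-formula-radial} are more singular than their three-dimensional counterparts. The redeeming feature is that the domains of integration in those terms are symmetric about the radius $r$ --- a manifestation of the strong Huygens principle --- which allows one to dominate them by Hardy-Littlewood maximal functions of the data (and of the finite-energy remainder), hence by $\|\uui\|_{\BB}$ and $A_0(\|\uui\|_{\BB},\delta_1)$.

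Finally, once $E_{\mathrm{hyp}}$ is bounded, a conformal Morawetz-type inequality --- the hyperbolic analogue of \eqref{morawetz-sp=1}, in which the potential term $-2w$ only contributes with a helpful sign --- yields $\|w\|_{L^3_{t,x}(\mathcal{C})}\le A_2(E_{\mathrm{hyp}})$. Combining this with the exterior estimate and the decomposition of the first paragraph, and invoking the inhomogeneous Strichartz estimates of \cite{taggart2008inhomogeneous} wherever the order $-2$ time decay of the dispersive estimate would otherwise cost a logarithm, produces \eqref{u-scatter} with the claimed form $A=A(\|\uui\|_{\BB},\delta_1)$, which would complete the proof of Proposition \ref{u-3-bounded}.
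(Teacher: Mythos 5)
Your proposal correctly identifies the skeleton of the paper's argument (Section~\ref{hyper-scatter}): reduce to the interior of a light cone via the exterior estimate \eqref{u-out-cone}, the local bound from Theorem~\ref{global-thm}, and the $L^3$ bound on $v$ from \eqref{v-stri}; pass to hyperbolic coordinates; show the hyperbolic energy of $\tw$ is finite, using the explicit radial formula \eqref{SL-formula-radial}, the sharp Hardy inequality, and maximal function bounds; then close with a Morawetz-type inequality.

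However, there is a genuine gap at the centre of the argument. You proceed as though $w$ satisfies the closed equation $\pa_{tt}(r^2w)-\pa_{rr}(r^2w)=-2w-r^2|w|w$, and you infer that, since both terms on the right have the defocusing sign, ``the energy identity contains no dangerous contribution and $E_{\mathrm{hyp}}(\tau)$ remains bounded by its value on the initial hyperboloid.'' That would be true for $u$ itself, but $w$ does \emph{not} satisfy a closed equation: it satisfies $w_{tt}-\Delta w=-|u|u$ with $u=v+w$, cf.\ \eqref{w-equation} and \eqref{tw-wave}. Consequently $\frac{d}{d\tau}E_h(\tw)$ is \emph{not} zero; it equals $\int\frac{|s^2\tw|s^2\tw-|s^2\tu|s^2\tu}{\sinh^2 s}\, s^2\tw_\tau\, ds$, and one must control the cross-terms coming from $\tv$. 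The paper does this using the pointwise decay of $v$ (Lemma~\ref{v-property}, leading to \eqref{tv-1}, \eqref{tv-2}) together with a Gronwall argument (inequalities \eqref{E(w)'-1}--\eqref{E(w)'-3} and \eqref{tv-L3}). The same issue reappears in the Morawetz step: $\frac{d}{d\tau}M(\tau)$ in \eqref{M'(tau)} carries the extra term $\int\frac{s^4}{\sinh^2 s}(|\tw|\tw-|\tu|\tu)s^2\tw_s\,ds$, which again must be bounded via the decay of $v$ before the sign structure of the two remaining integrals can be exploited. Without these two controls, neither the uniform boundedness of $E_h(\tw)(\tau)$ nor the conclusion $\iint \frac{|s^2\tw|^3}{\sinh^2 s}\,ds\,d\tau\lesssim 1$ follows.

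A smaller imprecision: the finite-hyperbolic-energy bound is not established directly at $\tau=0$. The paper first proves it for $s\ge s_0$ uniformly in $\tau\in[0,1]$ using the representation formula and maximal functions, and then for $s\le s_0$ it integrates over $\tau\in[0,1]$, invokes the polynomial energy growth \eqref{w-energy-2} from the global theory, and applies the mean value theorem to produce \emph{some} $\tau_0\in[0,1]$ at which the full hyperbolic energy is finite. You should make this pigeonhole step explicit, since the energy of $w$ (in the ordinary sense) is only known to grow at most polynomially, not to be bounded.
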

We  prove Theorem \ref{main-theorem} by employing  this and   
establishing  Proposition \ref{tnjlambda}, where the proof provides an alternate proof of
Lemma $6.2$ in \cite{dodson-2016}.

Finally, we need to prove Proposition \ref{u-3-bounded}.
From the partition  $u=v+w$,
it suffices to show the boundedness of $L^3_{t,x}$ norm of $w$.
We  prove the hyperbolic energy of $w$ is uniformly bounded.
Then, a Morawetz-type inequality 
yields that the $L^3_{t,x}$ norm of $w$
is bounded in the cone, which finishes the proof.

This paper is organized as follows: Section \ref{basics} gives
some tools from harmonic analysis and basic properties for the wave equation.
And, in Section \ref{local-theory} we give the decomposition of  $u$
and prove its global well-posedness. 
The existence of the function $A$ in \eqref{u-L3} is shown
in Section \ref{scattering} based on the Proposition \ref{u-3-bounded}.
%
Finally,  in Section \ref{hyper-scatter}, we complete the proof 
by showing Proposition
\ref{u-3-bounded}.

We end the introduction with some notations used throughout this paper.
We  use $\mathcal {S}( \R^d)$ to denote the space of Schwartz functions on $\R^d$.
For $1\leq p\leq \infty$, we define $L^p(\R^d)$ by the spaces of Lebesgue measurable functions with finite $L^p(\R^d)$-norm, which is defined by
$$\norm{f}_{L^p(\R^d)}=\Big(\int_{\R^d} |f(x)|^p dx\Big)^{1/p}, \;\;\text{for}\;\;1\leq p< \infty, $$
and
$ \norm{f}_{L^\infty(\R^d)} =\esssup_{x\in\R^d} |f(x)|.$
We let $\ell^p$ be  the  spaces of complex number sequences
$\{a_n\}_{n\in \mathbb{Z}}$ such that $\{a_n\}_{n\in \mathbb{Z} }\in l^p$
if and only if
 $$\norm{\{a_n\}}_{\ell^p_n(\mathbb{Z})}\triangleq \Big( \sum_n |a_n|^p \Big)^{1/p}<\infty,\;\;\text{for}\;\;1\leq p<\infty$$
  and
$ \norm{\{a_n\}}_{\ell^\infty_n(\mathbb{Z}) }:= \sup_n |a_n| <\infty $.
We use $X\lesssim Y$ to mean that there exists a constant $C>1$ such that
$X\leq C Y$, where the dependence of $C$ on the parameters will be clear from the context.
We use $X\thicksim Y$ to denote  $X\les Y$ and  $Y \les X$. $A\ll B$ denotes
there is a sufficiently large number $C$ such that $A\leq C^{-1} B $.

\section{Basic tools and some elementary properties for the wave equation}\label{basics}

In this section, we recall some tools from harmonic analysis and useful results for  the wave equation.
\subsection{Some tools from harmonic analysis}
Recall the Fourier transform of $f\in \mathcal {S}( \R^d)$ is defined by
$$\widehat{f}(\xi)=(2\pi)^{-d/2}\int_{\R^d } f(x)e^{-ix\xi} dx ,$$
which can be extended to Schwartz distributions naturally.
We will make frequent use of the Littlewood--Paley projection
operators. Specifically, we let $\varphi$ be a radial smooth function
supported on the ball $|\xi|\leq 2$ and equal to 1 on the ball
$|\xi|\leq 1$.  For $j\in \mathbb{Z} $, we define the
Littlewood--Paley projection operators by
\begin{align*}
\widehat{P_{\leq j}f}(\xi)&\,:=\,
\varphi(\xi/2^{j})\widehat{f}(\xi),
 \\ \widehat{P_{> j } f}(\xi)& \,:=\,
(1-\varphi(\xi/2^j))\widehat{f}(\xi),
\\ \widehat{P_{j}f}(\xi) &\,:=\,
(\varphi(\xi/2^j)-\varphi({\xi}/2^{j-1}))\widehat{f}(\xi).
\end{align*}


The Littlewood--Paley operators commute with derivative operators
and are bounded on the general Sobolev spaces. These operators also obey the
following standard Bernstein estimates:
\begin{lemma}[Bernstein estimates]
For $1\leq r\leq q\leq\infty$ and $s\geq 0$,
\begin{align*}
\big\||\nabla|^{\pm s} P_{j} f \big\|_{L_x^r(\R^d)} &\, \thicksim\,
2^{\pm js} \big\|
P_{j} f \big\|_{L_x^r(\R^d)},  \\
\big\||\nabla|^s P_{\leq j} f \big\|_{L_x^r(\R^d)} &\, \lesssim\, 2^{js}
\big\|
P_{\leq j} f \big\|_{L_x^r(\R^d)},  \\
 \big\| P_{> j} f \big\|_{L_x^r(\R^d)} &\, \lesssim\,  2^{-js} \big\|
|\nabla|^{s}P_{> j} f \big\|_{L_x^r(\R^d)}, \\
\big\| P_{\leq j} f \big\|_{L^q(\R^d)} &\, \lesssim\,
2^{(\frac{d}{r}-\frac{d}{q})j} \big\| P_{\leq j} f
\big\|_{L_x^r(\R^d)},
\end{align*}
where the fractional derivative operator $|\nabla|^\sigma$ is defined by $\widehat{|\nabla|^\sigma f}(\xi) =|\xi|^\sigma \widehat f(\xi)$, for $\sigma\in\R$.

 \end{lemma}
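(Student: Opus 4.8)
The plan is to realise every operator in the statement as convolution against an $L^1$-normalised rescaling of a fixed Schwartz kernel, and to extract the four inequalities from Young's convolution inequality together with the elementary scaling of such kernels. The two derivative estimates for $P_{\le j}$ and $P_{>j}$ I would reduce to the single‑block estimate for $P_j$ and then recover by summing over dyadic blocks, so that the multiplier $|\xi|^{\pm s}$ is only ever evaluated where it is smooth.

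First I would record the building blocks. Let $\psi\in\mathcal S(\R^d)$ with $\wh\psi(\xi)=\vp(\xi)-\vp(2\xi)$, so $\wh\psi$ is supported in $\{\tfrac12\le|\xi|\le2\}$ and $P_jf=\psi_j\ast f$ with $\psi_j=2^{jd}\psi(2^j\cdot)$; likewise $P_{\le j}$ and $P_{>j}$ are convolutions against $L^1$‑normalised rescalings of fixed Schwartz functions. Since $\norm{\psi_j}_{L^1}=\norm{\psi}_{L^1}$ is independent of $j$, each of $P_j,P_{\le j},P_{>j}$ is bounded on $L^r(\R^d)$, $1\le r\le\infty$, uniformly in $j$, and $|\nabla|^\sigma$ commutes with every $P_k$. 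For the last inequality of the Lemma I would pick a Schwartz $\tc$ equal to $1$ on $\{|\xi|\le2\}\supset\operatorname{supp}\vp$, so that $P_{\le j}=\tld P_{\le j}P_{\le j}$ where $\tld P_{\le j}$ is convolution against $2^{jd}k(2^j\cdot)$ with $k:=\mathcal F^{-1}\tc\in\mathcal S(\R^d)$; since $q\ge r$, Young's inequality with $1+\tfrac1q=\tfrac1p+\tfrac1r$ gives
\[
\norm{P_{\le j}f}_{L^q}\le\norm{2^{jd}k(2^j\cdot)}_{L^p}\norm{P_{\le j}f}_{L^r}=2^{(\frac dr-\frac dq)j}\norm{k}_{L^p}\norm{P_{\le j}f}_{L^r}.
\]

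Next I would prove the first assertion, the only step in which a derivative multiplier meets a kernel. Fix a Schwartz $\tc$ supported in $\{\tfrac14\le|\xi|\le4\}$ and equal to $1$ on $\{\tfrac12\le|\xi|\le2\}=\operatorname{supp}\wh\psi$, and let $\tld P_j$ be the operator with symbol $\tc(\xi/2^j)$; then $\tld P_j$ acts as the identity on any function whose frequencies lie in $\operatorname{supp}\wh\psi(\cdot/2^j)$, in particular on $P_jf$ and on $|\nabla|^\sigma P_jf$ for any $\sigma\in\R$. Writing $|\xi|^\sigma\tc(\xi/2^j)=2^{\sigma j}\Phi_\sigma(\xi/2^j)$ with $\Phi_\sigma(\eta)=|\eta|^\sigma\tc(\eta)$ — smooth, compactly supported, with support avoiding the origin — one sees that $\mathcal F^{-1}\Phi_\sigma\in\mathcal S(\R^d)$ and that $|\nabla|^\sigma\tld P_j$ is convolution against a rescaling of $\mathcal F^{-1}\Phi_\sigma$ of $L^1$ norm $\les2^{\sigma j}$. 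Hence, from $|\nabla|^{\pm s}P_jf=(|\nabla|^{\pm s}\tld P_j)(P_jf)$ and $P_jf=(\tld P_j|\nabla|^{\mp s})(|\nabla|^{\pm s}P_jf)$, Young's inequality yields $\norm{|\nabla|^{\pm s}P_jf}_{L^r}\les2^{\pm js}\norm{P_jf}_{L^r}$ and $\norm{P_jf}_{L^r}\les2^{\mp js}\norm{|\nabla|^{\pm s}P_jf}_{L^r}$, so the two quantities are comparable, which is the first assertion.

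Finally I would deduce the remaining two assertions from the first by dyadic summation, never evaluating $|\xi|^{\pm s}$ near the origin. Set $g=P_{>j}f$: since $\wh g$ vanishes on $\{|\xi|\le2^j\}$ while $\operatorname{supp}\wh\psi(\cdot/2^k)\subset\{|\xi|\le2^j\}$ for $k\le j-1$, we have $g=\sum_{k\ge j}P_kg$, and by the first assertion, the commutation $|\nabla|^sP_k=P_k|\nabla|^s$, and the uniform $L^r$‑boundedness of the $P_k$,
\[
\norm{P_{>j}f}_{L^r}\le\sum_{k\ge j}\norm{P_kg}_{L^r}\les\sum_{k\ge j}2^{-ks}\norm{P_k|\nabla|^sg}_{L^r}\les\Big(\sum_{k\ge j}2^{-ks}\Big)\norm{|\nabla|^sP_{>j}f}_{L^r}\les2^{-js}\norm{|\nabla|^sP_{>j}f}_{L^r},
\]
the series converging because $s>0$ (the case $s=0$ being trivial). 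The mirror‑image argument — for $g=P_{\le j}f$ one has $g=\sum_{k\le j+1}P_kg$ since $\wh g$ vanishes on $\{|\xi|\ge2^{j+1}\}$, and $\sum_{k\le j+1}2^{ks}\les2^{js}$ — gives $\norm{|\nabla|^sP_{\le j}f}_{L^r}\les2^{js}\norm{P_{\le j}f}_{L^r}$, the second assertion. The whole argument is entirely routine; the one point worth a second thought is that $|\xi|^{\pm s}$ is not smooth at the origin for non‑even $s$, which is exactly why I keep it localised to dyadic annuli throughout and recover the $P_{\le j}$ and $P_{>j}$ estimates by summation rather than by a direct kernel computation — so there is no genuine obstacle here, only bookkeeping.
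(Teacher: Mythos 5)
The paper does not prove this lemma; it records the Bernstein estimates as standard facts and moves on. Your argument is correct and is the usual textbook proof: realise each Littlewood--Paley projector as convolution against an $L^1$-normalised rescaling of a fixed Schwartz kernel, handle the multiplier $|\xi|^{\pm s}$ only on dyadic annuli (where it is smooth, so the corresponding kernel is Schwartz with $L^1$ norm $\les 2^{\pm js}$), invoke Young's inequality, and recover the $P_{\leq j}$ and $P_{>j}$ derivative bounds by geometric summation over single blocks, the sums converging precisely because $s>0$. Every step checks out, including the reduction $P_jf=(\tilde P_j|\nabla|^{\mp s})(|\nabla|^{\pm s}P_jf)$ that gives the reverse inequality in the first assertion; the only cosmetic quibble is that you use the same symbol for two different fattened cutoffs (one an annular bump, one equal to $1$ on the ball), which is harmless but would be cleaner to name separately.
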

\begin{definition}[Homogeneous Besov spaces]
Let $s$ be a real number and $1\leq p,r \leq \infty.$
We denote the homogeneous Besov norm by
\beq
\norm{f}_{\dot B^s_{p,r} (\R^d)}\,:= \,\big\|  \{2^{js} \norm{P_j f}_{L^p(\R^d)}\} \big\| _{\ell^r_j(\mathbb{Z})},
\eeq
for $f\in\mathcal{S}(\R^d)$. Then the Besov space $\dot B^s_{p,r}(\R^d)$ is the completion
of the Schwartz function under this norm.
\end{definition}
%
%
%
%

We shall give the following radial Sobolev-type inequalities, which are analogous  to $3D$ cases established  in  \cite{dodson-2016}.
We denote radial derivative  by $\pa_r f(x)=(\frac{x}{|x|}\cdot \nabla) f(x)$ for any function $f$ defined on $\R^5$.

\begin{lemma}[Radial Sobolev-type inequalities in Besov spaces]\label{rad-sobolev}
 For any radial function $f\in\mathcal{S}(\R^5)$, we have


\beq\label{ra-sblv-infty-2}
\norm{|x|^2  f}_{L^\infty(\R^5)} \les   \norm{f}_{\dot{B}^{\frac12}_{2,1}(\R^5)}.
\eeq
Let $\uui\in\BB(\R^5)$ be a radial function, then we have

\begin{align}
\norm{\frac{1}{|x|^2} \pa_r u_0(x) }_{L^1_x(\R^5)} + \norm{\frac{1}{|x|}\pa_{rr} u_0(x) }_{L^1_x(\R^5)} + \norm{\frac{1}{|x|^3} u_0(x) }_{L^1_x(\R^5)}+\norm{|x|^3\pa_r u_0}_{L^\infty_x(\R^5)} \,&\les\, \norm{u_0}_{\dot B^{3}_{1,1}(\R^5)}, \label{rad-sobolev-u_0}\\
\norm{\frac{1}{|x|} \pa_r u_1(x) }_{L^1_x(\R^5)}  + \norm{\frac{1}{|x|^2} u_1(x) }_{L^1_x(\R^5)}+ \norm{|x|^3 u_1(x)}_{L^\infty_x(\R^5)}
\,&\les\,\norm{u_1}_{\dot B^{2}_{1,1}(\R^5)}.\label{rad-sobolev-u_1}
\end{align}
\end{lemma}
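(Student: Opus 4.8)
The plan is to prove the radial Sobolev-type inequalities by reducing everything to estimates on single Littlewood--Paley pieces and then summing in $\ell^1$. Since $f$ is radial, writing $f(x) = f(r)$ with $r = |x|$, the key observation is that each $P_j f$ is a nice function whose size and decay are controlled by Bernstein-type estimates. For the basic pointwise bound \eqref{ra-sblv-infty-2}, I would start from $|x|^2 f(x) = \sum_{j} |x|^2 P_j f(x)$ and estimate $\||x|^2 P_j f\|_{L^\infty}$ in two regimes: for $|x| \lesssim 2^{-j}$ (near the origin) use $\||x|^2 P_j f\|_{L^\infty(|x| \le 2^{-j})} \lesssim 2^{-2j}\|P_j f\|_{L^\infty} \lesssim 2^{-2j} 2^{5j/2}\|P_j f\|_{L^2} = 2^{j/2}\|P_j f\|_{L^2}$ by Bernstein; for $|x| \gtrsim 2^{-j}$ one exploits the decay of a frequency-localized radial function. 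The cleanest route for the large-$|x|$ region is the radial representation: for a radial function on $\R^5$ with Fourier support in an annulus of size $2^j$, one has $|P_j f(r)| \lesssim 2^{2j} r^{-3/2} \cdot 2^{-j/2}\| \widehat{P_j f}\|_{L^1_\xi(\text{in the angular sense})}$ — more simply, use the known asymptotics of the kernel, or integrate by parts in the oscillatory integral $P_jf(r) = c \int e^{ir\omega\cdot\xi}\varphi_j(\xi)\widehat f(\xi)\,d\xi$. In either regime the bound is $\lesssim 2^{j/2}\|P_j f\|_{L^2}$, so summing in $j$ gives exactly $\|f\|_{\dot B^{1/2}_{2,1}}$.

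For the weighted $L^1$ and $L^\infty$ estimates \eqref{rad-sobolev-u_0}--\eqref{rad-sobolev-u_1}, I would handle each term the same way: decompose $u_0 = \sum_j P_j u_0$, note that $\pa_r$ and $\pa_{rr}$ on a radial function are controlled by $|\nabla|$ and $|\nabla|^2$ respectively (so Bernstein gives $\|\pa_r P_j u_0\|_{L^1} \lesssim 2^j \|P_j u_0\|_{L^1}$, etc.), and then estimate each weighted norm of a single dyadic piece. For an $L^1$ bound like $\|\,|x|^{-2}\pa_r P_j u_0\|_{L^1(\R^5)}$, I would split the integral at $|x| \sim 2^{-j}$. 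On $|x| \le 2^{-j}$ the weight $|x|^{-2}$ is locally integrable in $\R^5$ (since $2 < 5$), and combined with $\|\pa_r P_j u_0\|_{L^\infty} \lesssim 2^{3j/2}\cdot 2^j\|P_j u_0\|_{L^2}$... actually more robustly I would use the radial decay of $P_j u_0$ directly: for $|x| \gtrsim 2^{-j}$ one has pointwise $|P_j u_0(x)| \lesssim 2^j (2^j|x|)^{-N}\|P_j u_0\|_{\text{localized}}$ for large $N$ by rapid decay of the radial kernel away from the light-cone-type scale, while the $L^1$-mass near $|x|\lesssim 2^{-j}$ is small. Tracking the powers carefully, each term should come out to $\lesssim 2^{3j}\|P_j u_0\|_{L^1}$ (matching the $\dot B^3_{1,1}$ weight) and $\lesssim 2^{2j}\|P_j u_1\|_{L^1}$ (matching $\dot B^2_{1,1}$), after which summation in $\ell^1_j$ closes the estimate. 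The $L^\infty$ terms $\||x|^3\pa_r u_0\|_{L^\infty}$ and $\||x|^3 u_1\|_{L^\infty}$ are handled by the same two-regime pointwise analysis as in \eqref{ra-sblv-infty-2}, now with a $|x|^3$ weight forcing one to use the decay of the radial kernel on $|x| \gg 2^{-j}$.

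The main obstacle I expect is establishing the correct pointwise decay estimate for a frequency-localized radial function in $\R^5$ — namely a bound of the shape $|P_j f(r)| \lesssim 2^{(5/2)j}(1+2^j r)^{-2}\|P_j f\|_{L^2}$ or its $L^1$-normalized analogue $|P_j f(r)| \lesssim 2^{5j}(1+2^j r)^{-2}\|P_j f\|_{L^1}$ (the power $2$ in the decay is exactly the "$-2$ dispersive decay" the authors flag as the source of the logarithmic difficulty in $5$D). The exponent of the weight that can actually be absorbed is limited by this decay rate together with the dimensional integrability threshold, and it is precisely the matching $|x|^{\pm 3}$, $|x|^{\pm 2}$ powers appearing in the statement that make the sums converge. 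I would prove this decay by writing $P_j f$ via its radial kernel $K_j(r) = c\,2^{5j}\int_0^\infty \varphi_j(\rho)\,\frac{J_{3/2}(r\rho)}{(r\rho)^{3/2}}\rho^4\,d\rho$ (or directly via the $\R^5$ wave/Bessel kernel), rescaling to $j=0$, and integrating by parts twice in $\rho$ using $|J_{3/2}(s)|\lesssim s^{-1/2}$ for large $s$; the boundary terms vanish by the cutoff and each integration by parts gains a factor $r^{-1}$, yielding the $r^{-2}$ decay for $r\gtrsim 1$, while for $r\lesssim 1$ one just bounds the kernel by its $L^1_\rho$-size. Once this kernel estimate is in hand, all six weighted norms follow from the same split-and-sum argument, and the radial structure (the symmetry of the integration regions about $r$, as noted in Remark 4) is exactly what keeps the constants uniform.
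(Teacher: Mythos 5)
Your approach for \eqref{ra-sblv-infty-2} and for the weighted $L^1$ terms in \eqref{rad-sobolev-u_0}--\eqref{rad-sobolev-u_1} is essentially the paper's: project to dyadic pieces, split the radial integral at $r\sim 2^{-j}$, use Bernstein on the inner region and the $r^{-2}$ decay (coming from $|\widehat{d\sigma_{\mathbb{S}^4}}(\xi)|\lesssim(1+|\xi|)^{-2}$) on the outer region, then sum in $\ell^1_j$. (The paper actually proves \eqref{ra-sblv-infty-2} with the $r^{-2}$ bound alone, no split, but the content is the same; and for the $L^1$ terms the outer region is handled just by $\int_{r\ge 2^{-j}}$ with the weight crudely bounded, not even needing pointwise decay.) The paper also first replaces $\partial_{rr}u_0$ by $\Delta u_0$ using $\Delta f=\partial_{rr}f+\tfrac{4}{r}\partial_r f$ before running the $L^1$ estimates, which you don't mention but is a minor bookkeeping point.

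The genuine gap is in your treatment of the two $L^\infty$ terms, $\|\,|x|^3\partial_r u_0\|_{L^\infty}$ and $\|\,|x|^3 u_1\|_{L^\infty}$. You propose to handle them ``by the same two-regime pointwise analysis as in \eqref{ra-sblv-infty-2},'' but this does not close. The best pointwise decay you can extract from a single Littlewood--Paley piece measured against an $L^p$ norm of that piece is $|P_j f(r)|\lesssim r^{-2}2^{j/2}\|P_j f\|_{L^2}\lesssim r^{-2}2^{3j}\|P_j f\|_{L^1}$; there is no room to integrate by parts further, since that would require derivatives of $\widehat{P_j f}$, not controlled by $\|P_j f\|_{L^1}$. (The $(1+2^j r)^{-N}$ rapid decay you mention is a property of the convolution kernel $K_j$ of the projection $P_j$, not of $P_j f$ itself.) Consequently, for $|x|\gg 2^{-j}$ one gets
$|x|^3|\partial_r P_j u_0(|x|)|\lesssim |x|\cdot 2^{4j}\|P_j u_0\|_{L^1}$,
which grows linearly in $|x|$ and cannot be summed into $2^{3j}\|P_j u_0\|_{L^1}$. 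The $|x|^2$ weight in \eqref{ra-sblv-infty-2} is exactly matched by the $r^{-2}$ decay; the $|x|^3$ weight overshoots it by one power. The paper avoids this entirely by not running a dyadic argument for the $L^\infty$ terms: it uses the fundamental theorem of calculus on the already-proven $L^1$ estimates, writing
$|y|^3|\partial_r u_0(y)|\le\int_{|y|}^\infty |y|^3|\partial_{rr}u_0(r)|\,dr\le\int_{|y|}^\infty r^3|\partial_{rr}u_0(r)|\,dr\lesssim\big\|\tfrac{1}{|x|}\partial_{rr}u_0\big\|_{L^1(\R^5)}$,
and similarly $|y|^3|u_1(y)|\lesssim\|\tfrac{1}{|x|}\partial_r u_1\|_{L^1}$. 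You need this FTC reduction, or some equivalent device converting the $L^\infty$ bound into an $L^1$ one, rather than a pointwise dyadic bound.
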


\begin{proof}
We first consider \eqref{ra-sblv-infty-2}. Since $f$ is radial, using polar coordinates, we have
\begin{equation}\begin{split}
 P_jf(|x|) = ~~  & ~~  P_j f(x) ~=~\int_{\R^5}  \widehat{P_j f}(\xi) e^{ix\xi} d\xi\\
                   =~~ &~~  \int_0^\infty  \int_{\mathbb{S}^{4} }  \widehat{P_j f}(r ) r^{4} e^{irx\omega} d\sigma(\omega) dr .
\end{split}\end{equation}
Recall the decay estimates of Fourier transform of the surface measure on the sphere
  $$\widehat{ d\sigma_{\mathbb{S}^{4}}}(\xi) \leq C( 1+|\xi|)^{-2},$$
which and  H\"older's inequality yield
\beq\label{pj-L-infty}
   |P_j f(|x|)| \,\les\,   \int_0^\infty |\widehat{P_j f}(r)| r^{2} |x|^{-2 }  dr\\
                \,\les  \,|x|^{-2 } 2^{\frac12j}  \norm{P_j f}_{L^2} .
  \eeq 
Then the  inequality \eqref{ra-sblv-infty-2} follows  from \eqref{pj-L-infty} and the definition of the Besov space.

Next, we consider \eqref{rad-sobolev-u_0} and \eqref{rad-sobolev-u_1}. By the density of Schwartz functions in $\BB(\R^5)$, we may assume that $u_0,u_1\in \mathcal{S}  (\R^5). $
We claim that it suffices to show

\begin{align}
 \norm{\frac{1}{|x|^2} \pa_r u_0(x) }_{L^1_x(\R^5)} + \norm{\frac{1}{|x|}\left| \Delta u_0(x) \right| }_{L^1_x(\R^5)} + \norm{\frac{1}{|x|^3} u_0(x) }_{L^1_x(\R^5)}\, \les& \,  \norm{u_0}_{\dot B^{3}_{1,1}(\R^5)},\label{u_0--1}\\
  \norm{\frac{1}{|x|} \pa_r u_1(x) }_{L^1_x(\R^5)}  + \norm{\frac{1}{|x|^2} u_1(x) }_{L^1_x(\R^5)} \, \les& \,
   \norm{u_1}_{\dot B^{2}_{1,1}(\R^5)}\label{u_1--1}.
\end{align}
To see this,  by using the fact $\Delta f=\pa_{rr}f+\frac4r \pa_r f$ for radial function $f(x)$ on $\R^5$,  we have
\beq
 \norm{\frac{1}{|x|}\pa_{rr} u_0(x) }_{L^1_x(\R^5)} \,\les\,  \norm{\frac{1}{|x|^2} \pa_r u_0(x) }_{L^1_x(\R^5)} + \norm{\frac{1}{|x|}\left| \Delta u_0(x) \right| }_{L^1_x(\R^5)} \,\les\, \norm{u_0}_{\dot B^{3}_{1,1}(\R^5)}
\eeq
From the fundamental theorem of calculus and polar coordinates, for $y\in\R^5\backslash\{0\} $, we have
\beq
|y|^3|\pa_r u_0(y)|
\,\les\, \int_{|y|}^\infty \int_{\mathbb{S}^4} r^3 |\pa_{rr} u_0(r)| d\sigma(\omega)dr
\,\les\,  \norm{\frac{1}{|x|}\pa_{rr} u_0(x) }_{L^1_x(\R^5)}\,\les \,
   \norm{u_0}_{\dot B^{3}_{1,1}(\R^5)}
\eeq
and
\beq
|y|^3 | u_1(y)| \,\les\, \int_{|y|}^\infty \int_{\mathbb{S}^4} r^3 |\pa_{rr} u_1(r)| d\sigma(\omega)dr
\,\les\,     \norm{\frac{1}{|x|} \pa_r u_1(x) }_{L^1_x(\R^5)}\,\les\, \norm{u_1}_{\dot B^{2}_{1,1}(\R^5)}.
\eeq

Hence, we are reduced to proving \eqref{u_0--1} and \eqref{u_1--1}.
We just give the estimate for the first term on the
 left hand side of \eqref{u_0--1}, since others can be handled  similarly.
For $j\in\Z$, utilizing Bernstein's estimates and polar coordinates, we obtain

\beq\begin{split}
         \norm{\frac{1}{|x|^2}\pa_r P_j u_0(x)}_{L^1_x(\R^5)}
     \les\,  &\, \int_0^\infty \int_{\mathbb{S}^4}     r^2 \pa_r P_j u_0(r) d\sigma(\omega) dr              \\
     \les\,  & \,  \int^{2^{-j}}_0  \frac{1}{r^2} |\pa_r (P_j u_0)| r^4 dr
               + \int_{2^{-j}}^{\infty}  \frac{1}{r^2} |\pa_r (P_j u_0)| r^4 dr \\
    \les \, &\, 2^{-3j} \norm{\pa_r(P_j u_0)}_{L^\infty_r(\R_+)} + 2^{2j}\norm{\pa_r(P_j u_0)}_{L^1_x(\R^5)} \\
 \les  \,&  \,  2^{3j} \norm{P_j u_0}_{L^1_x(\R^5)}.
    \end{split}
\eeq
Thus, we have $ \norm{\frac{1}{|x|^2}\pa_r  u_0(x)}_{L^1_x(\R^5)} \, \les\, \norm{u_0}_{\dot{B}^3_{1,1}(\R^5)}$.
\end{proof}
As a direct consequence of Lemma \ref{rad-sobolev}, we have
\beq\label{u_01-L^2}
\norm{|x|^\frac12 \pa_r u_0(x)}_{L^2_x(\R^5)}+\norm{|x|^{-\frac12} u_0(x)}_{L^2_x{(\R^5)}} +\norm{|x|^\frac12 u_1(x)}_{L^2_x(\R^5)} \les \norm{\uui}_{\BB(\R^5)}.
\eeq

\begin{lemma}\label{besov-a}
Suppose $\chi(x)\in C_c^\infty(\R^5).$ Let $R=2^k$ be a dyadic number for $k\in \mathbb{Z}$ and
denote $\chi_R(x)=\chi(\frac{x}R)$. Then  we have
\beq\label{multi-besov-1}
\norm{\chi_R(x) f}_{\dot{B}^{\frac12}_{2,1} (\R^5)}\, \les\, \norm{f}_{\dot{B}^{\frac12}_{2,1}(\R^5)},
\eeq
\beq\label{multi-besov-2}
\norm{\chi_R(x) g}_{\dot{B}^{-\frac12}_{2,1}(\R^5) }\, \les\, \norm{g}_{ \dot{B}^{-\frac12}_{2,1} (\R^5)},
\eeq
where the bound is independent of $R.$
Furthermore, if $\chi(x)=1$ on $|x|\leq 1$, then for
$(f,g)\in\dot{B}^{\frac12}_{2,1}\times \dot{B}^{-\frac12}_{2,1}(\R^5)$, we have

\beq\label{besov-lim}
\lim_{R\ra \infty} \norm{(1-\chi_R(x))f}_{\dot{B}^{\frac12}_{2,1}(\R^5)}
  +\norm{(1-\chi_R(x))g}_{\dot{B}^{-\frac12}_{2,1} (\R^5)} \, =\,0.
\eeq
\end{lemma}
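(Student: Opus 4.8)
The plan is to prove the three assertions by a standard Littlewood--Paley frequency decomposition, splitting each interaction into the "low-frequency-times-high-frequency" and "high-frequency-times-high-frequency" (plus diagonal) regimes, and exploiting the scaling $\chi_R(x)=\chi(x/R)$ together with Bernstein's estimates. First I would record the elementary fact that $\widehat{\chi_R}(\xi)=R^5\widehat{\chi}(R\xi)$, so that $\|P_j\chi_R\|_{L^\infty}\lesssim_N \min(1, (2^jR)^{-N})$ for every $N$, and $\|\nabla^k\chi_R\|_{L^\infty}\lesssim R^{-k}$; these are the only properties of $\chi$ we shall use. For \eqref{multi-besov-1}, write $P_j(\chi_R f)$ using the paraproduct decomposition $\chi_R f = \sum_j P_{<j-3}\chi_R\, P_j f + \sum_j P_{<j-3}f\, P_j\chi_R + \sum_{|j-j'|\le 3} P_j\chi_R\, P_{j'}f$. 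The first (high-high on $f$) term is essentially frequency-localized to $\sim 2^j$ and its $L^2$ norm is bounded by $\|P_{<j-3}\chi_R\|_{L^\infty}\|P_j f\|_{L^2}\lesssim \|P_j f\|_{L^2}$, which sums in $\ell^1_j$ with the weight $2^{j/2}$ after the harmless shift; the second and third terms require summing $\|P_k\chi_R\|_{L^\infty}$ against $\|P_{j'}f\|_{L^2}$ for $k\ge j-O(1)$, where we gain from $\sum_{k} 2^{k/2}\|P_k\chi_R\|_{L^\infty}\lesssim \sum_k 2^{k/2}\min(1,(2^kR)^{-N})\lesssim R^{-1/2}$, which combined with $\|P_{j'}f\|_{L^2}$ summed against the $R^{1/2}$ coming from the Bernstein factor $2^{j/2}\lesssim 2^{j'/2}$ on the low-frequency piece recovers $\|f\|_{\dot B^{1/2}_{2,1}}$ uniformly in $R$. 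One checks each piece lands its output frequency in a fixed dyadic annulus (or a tail one controls), so the $\ell^1$ Besov sum closes.

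The bound \eqref{multi-besov-2} follows by the same paraproduct bookkeeping with the weight $2^{-j/2}$; the only point of care is the low-$f$-high-$\chi_R$ term, where one must not lose when $f$ is supported at very low frequencies — but there $\|P_{<j-3}f\|_{L^\infty}$ is not available, so instead I would keep $f$ in $L^2$ and put $P_j\chi_R\in L^\infty$, using $\|P_{<j-3}f\|_{L^2}2^{(\cdots)}$... more precisely one uses Bernstein to move the output to $L^2$ at frequency $2^j$ and bounds $\|P_j\chi_R\|_{L^\infty}\|P_{<j-3}f\|_{L^2}$, then sums $2^{-j/2}\|P_j\chi_R\|_{L^\infty}\lesssim R^{1/2}$ against $\sup_{k<j}2^{k/2}\cdot 2^{-k/2}\|P_k f\|_{L^2}$ — here the negative regularity works in our favor since $2^{-j/2}$ is summable against the growing $2^{k/2}$ only up to $k\approx j$, giving again a bound $\lesssim \|g\|_{\dot B^{-1/2}_{2,1}}$ with an $R$-independent constant. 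I would alternatively invoke duality: $\dot B^{-1/2}_{2,1}$ is not quite the dual of $\dot B^{1/2}_{2,1}$ (that would be $\dot B^{-1/2}_{2,\infty}$), so the cleanest route really is the direct paraproduct estimate, and I would present it that way.

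For \eqref{besov-lim}, fix $(f,g)\in\bb$ and $\eps>0$. By definition of the Besov norm choose $J$ with $\sum_{|j|>J}2^{j/2}\|P_jf\|_{L^2}+\sum_{|j|>J}2^{-j/2}\|P_jg\|_{L^2}<\eps$. Write $1-\chi_R = (1-\chi_R)$ and split $f=P_{\le J}f+P_{>J}f$ (similarly $g$). For the tail $P_{>J}f$ (and $P_{>J}g$), apply \eqref{multi-besov-1} (resp. \eqref{multi-besov-2}) to $1-\chi_R$ — note $1-\chi(x/R)$ is not compactly supported, but $\chi_R$ is, and $\|(1-\chi_R)h\|_{\dot B^{1/2}_{2,1}}\le \|h\|_{\dot B^{1/2}_{2,1}}+\|\chi_R h\|_{\dot B^{1/2}_{2,1}}\lesssim \|h\|_{\dot B^{1/2}_{2,1}}$ uniformly in $R$, so this part is $\lesssim\eps$. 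For the low-frequency main part $P_{\le J}f$, since $\|(1-\chi_R)P_{\le J}f\|_{\dot B^{1/2}_{2,1}}$ only involves finitely many dyadic blocks $j\le J+O(1)$ (the function $(1-\chi_R)P_{\le J}f$ has spatial support moving to infinity and is smooth), one estimates each block by $\|P_j((1-\chi_R)P_{\le J}f)\|_{L^2}$ and shows it $\to0$ as $R\to\infty$: indeed $(1-\chi_R)P_{\le J}f\to0$ in $L^2$ by dominated convergence (it is bounded by $|P_{\le J}f|\in L^2$ and goes to $0$ pointwise), and $P_j$ is bounded on $L^2$, so each of the finitely many blocks tends to $0$; their weighted sum does too. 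Combining, $\limsup_{R\to\infty}(\cdots)\lesssim\eps$, and letting $\eps\to0$ gives \eqref{besov-lim}. The main obstacle is purely bookkeeping: making the paraproduct sums in \eqref{multi-besov-1}--\eqref{multi-besov-2} close with an $R$-independent constant, for which the key input is the rapid decay $\|P_k\chi_R\|_{L^\infty}\lesssim_N(1+2^kR)^{-N}$ balanced against the $2^{\pm k/2}$ Besov weights.
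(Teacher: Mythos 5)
Your proof takes a genuinely different route from the paper's: the paper first reduces to $R=1$ by the scaling invariance of the operator norm (if $g(y)=f(Ry)$, then $\chi_R f=(\chi g)(\cdot/R)$, and the dilation factor $R^{2}$ in the $\dot B^{1/2}_{2,1}$ norm exactly cancels against $\|g\|_{\dot B^{1/2}_{2,1}}=R^{-2}\|f\|_{\dot B^{1/2}_{2,1}}$), then cites a lemma of Dodson for the $R=1$ multiplier bound, and finishes \eqref{besov-lim} by density of $C_c^\infty\times C_c^\infty$. You instead try to run the paraproduct for general $R$ with $R$-dependent constants, and you prove \eqref{besov-lim} by frequency truncation plus dominated convergence. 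The second half is fine in principle (though the split should be $f=P_{|j|\le J}f+P_{|j|>J}f$, not $P_{\le J}f+P_{>J}f$, since your smallness hypothesis is on the two-sided tail $\sum_{|j|>J}$, and the ``finitely many blocks'' claim should be replaced by a rapid-decay tail estimate).

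However, there is a genuine gap in your treatment of the high-$\chi_R$/low-$f$ paraproduct term in \eqref{multi-besov-1}. You propose to bound $\|P_j\chi_R\cdot P_{<j-3}f\|_{L^2}$ by $\|P_j\chi_R\|_{L^\infty}\|P_{<j-3}f\|_{L^2}$ and sum. This fails on two counts. First, $P_{<j-3}f$ need not lie in $L^2$ at all when $f\in\dot B^{1/2}_{2,1}$: taking $\|P_kf\|_{L^2}=2^{-k/2}k^{-2}$ for $k<0$ gives $\|f\|_{\dot B^{1/2}_{2,1}}<\infty$ while $\sum_{k<0}\|P_kf\|_{L^2}^2=\infty$. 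Second, even if one writes $\|P_{<j-3}f\|_{L^2}\le\sum_{k<j-3}2^{-k/2}\bigl(2^{k/2}\|P_kf\|_{L^2}\bigr)$ and interchanges sums, the resulting kernel $\sup_k 2^{-k/2}\sum_{j>k+3}2^{j/2}\|P_j\chi_R\|_{L^\infty}$ diverges as $k\to-\infty$ (the inner sum is $O(1)$ uniformly while $2^{-k/2}\to\infty$). The ``$R^{1/2}$ coming from the Bernstein factor $2^{j/2}\lesssim 2^{j'/2}$'' that you invoke to compensate the $R^{-1/2}$ does not exist: in the high-low term $j>j'$ (so the inequality goes the wrong way), and $f$ is arbitrary, so no $R$-dependence is available from $f$. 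The correct estimate for this term requires Bernstein on the \emph{low-frequency factor of $f$}, e.g.\ $\|P_j\chi_R\cdot P_{<j-3}f\|_{L^2}\le\|P_j\chi_R\|_{L^2}\|P_{<j-3}f\|_{L^\infty}\lesssim\|P_j\chi_R\|_{L^2}\sum_{k<j-3}2^{5k/2}\|P_kf\|_{L^2}$; this is precisely the point that the paper sidesteps by first scaling to $R=1$. (Your pairing does happen to work for the negative-regularity estimate \eqref{multi-besov-2} because the weight $2^{-j/2}$ makes the analogous kernel $\sup_k 2^{k/2}\sum_{j>k+3}2^{-j/2}\|P_j\chi\|_{L^\infty}$ finite — but that is a coincidence of the sign of the regularity and does not carry over to \eqref{multi-besov-1}.) Either adopt the paper's scaling reduction, or redo the $hl$ block with the Bernstein step on $f$.
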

\begin{proof}
By scaling, to prove the inequalities \eqref{multi-besov-1} and \eqref{multi-besov-2}, it suffices to prove the cases for $R=1,$
which follows from  a similar proof of Lemma 2.2 in  \cite{dodson-2016}.
 On the other hand, \eqref{besov-lim} follows from \eqref{multi-besov-1}, \eqref{multi-besov-2}, and the fact that  $C_c^\infty\times C_c^\infty(\R^5) $ is dense in $ \dot{B}^{\frac12}_{2,1}\times \dot{B}^{-\frac12}_{2,1}(\R^5).$
\end{proof}

Finally, we  need the following chain rule estimates for later use.

\begin{lemma}[$C^1$-fractional chain rule,\cite{christ1991dispersion}]\label{C1-chain}
 Suppose $G\in C^1(\mathbb{C}), s\in(0,1],  $ and $1<q,q_1,q_2<\infty$ satisfying
$\frac1q=\frac1{q_1}+\frac1{q_2}$. Then
\beq
\norm{|\nabla|^s G(u)}_{L^q(\R^d)}\, \les\,
 \norm{G'(u)}_{L^{q_1}(\R^d)} \norm{|\nabla|^s u}_{L^{q_2}(\R^d)}.
\eeq
\end{lemma}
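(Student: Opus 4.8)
I would prove this by adapting the classical fractional chain rule of Christ--Weinstein \cite{christ1991dispersion}, as follows. First dispose of the endpoint $s=1$: since the Riesz transforms are bounded on $L^q(\R^d)$ for $1<q<\infty$ one has $\norm{|\nabla|h}_{L^q}\thicksim\norm{\nabla h}_{L^q}$, while the ordinary chain rule gives the pointwise identity $\nabla(G(u))=G'(u)\,\nabla u$, so Hölder with $\tfrac1q=\tfrac1{q_1}+\tfrac1{q_2}$ yields $\norm{|\nabla|G(u)}_{L^q}\les\norm{G'(u)}_{L^{q_1}}\norm{|\nabla|u}_{L^{q_2}}$. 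From now on assume $0<s<1$.

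For $0<s<1$ the plan is to replace $|\nabla|^s$ by a difference/square-function description valid on every $L^q(\R^d)$ with $1<q<\infty$ --- for instance the Gagliardo-type square function
\[
\mathcal D_sh(x)\,:=\,\Big(\int_{\R^d}\frac{|h(x)-h(x-y)|^2}{|y|^{d+2s}}\,dy\Big)^{1/2},\qquad\text{for which }\ \norm{|\nabla|^sh}_{L^q}\thicksim\norm{\mathcal D_sh}_{L^q}
\]
(Plancherel for $q=2$, a Stein-type square-function argument for general $q$). The goal then becomes the pointwise estimate $\mathcal D_s(G(u))(x)\les M(|G'(u)|)(x)\,\mathcal D_su(x)$, after which Hölder in $x$, the $L^{q_1}$-boundedness of the Hardy--Littlewood maximal operator $M$, and the square-function equivalence finish the proof. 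The point of departure for the pointwise estimate is the fundamental theorem of calculus,
\[
\big|G(u(x))-G(u(x-y))\big|\ \le\ \Big(\int_0^1\big|G'\big((1-\theta)u(x-y)+\theta u(x)\big)\big|\,d\theta\Big)\big|u(x)-u(x-y)\big|.
\]

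The genuinely delicate point --- which I expect to be the main obstacle --- is that one cannot simply pull the $\theta$-averaged derivative out of the $y$-integral as a supremum: the argument $(1-\theta)u(x-y)+\theta u(x)$ is only a convex combination of two values of $u$, need not belong to the range of $u$, and $G'$ is merely continuous (no modulus of continuity or growth bound), so such a supremum ``sees'' $G'$ far from the values taken by $u$ and is much too lossy. The remedy is to argue scale by scale: decompose the $y$-integral dyadically (equivalently, use a Littlewood--Paley or Strichartz-oscillation version of the square function), and at scale $|y|\sim 2^{-k}$ replace $u(x-y)$ by its mean over $B(x,2^{-k})$, the resulting error being reabsorbed into $\mathcal D_su(x)$ via a Fefferman--Stein vector-valued maximal inequality; one is then left to check that the $\theta$-averaged derivative at scale $2^{-k}$ is controlled by the average of $|G'(u)|$ over $B(x,2^{-k})$, hence by $M(|G'(u)|)(x)$, which is exactly where the $C^1$ hypothesis must be exploited with care, and summation over $k$ closes the estimate. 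I note finally that for the nonlinearity actually needed in this paper, $G(u)=|u|u$ with $|G'(u)|\thicksim|u|$, one can bypass the general machinery altogether: the elementary inequality $\big||a|a-|b|b\big|\le(|a|+|b|)\,|a-b|$ recasts $\mathcal D_s(|u|u)$ in a form handled directly by a Bony paraproduct decomposition of $|u|u$ into low--high, high--low and high--high frequency interactions summed with Bernstein's inequalities, a route that also makes the precise Hölder pairing and the case $s=1$ transparent.
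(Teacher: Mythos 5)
The paper does not give its own proof of this lemma; it is cited directly from Christ and Weinstein, so there is no in-paper argument for your proposal to be compared against. Your $s=1$ case, via Riesz transforms and the pointwise chain rule, is correct, and your passage to a Gagliardo-type square function for $0<s<1$ is a reasonable alternative to the Littlewood--Paley telescoping used in the original reference. You also correctly flag the delicate point: after the fundamental theorem of calculus one must bound $G'$ at the convex combination $(1-\theta)u(x-y)+\theta u(x)$, which need not be a value attained by $u$.

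What you do not supply is a proof that the proposed remedy closes this gap. Replacing $u(x-y)$ by its average over $B(x,2^{-k})$ still leaves $G'$ evaluated at a spatial average of $u$, and for a merely continuous $G'$ there is no Jensen-type, subadditivity, or growth structure that lets one exchange $G'$ with the averaging operation; the sentence ``which is exactly where the $C^1$ hypothesis must be exploited with care'' is a placeholder for the technical core of the argument rather than a resolution of it. That core is precisely what the Christ--Weinstein paradifferential telescoping $G(u)=\sum_k\bigl[G(u_{\le k})-G(u_{\le k-1})\bigr]$, together with careful commutator and maximal-function estimates on each block, is designed to handle, and your sketch does not reproduce it. Your closing observation, on the other hand, is exactly right and is what actually matters for this paper: here $G(u)=|u|u$, so $|G'(\zeta)|\sim|\zeta|$ is monotone, $|G'((1-\theta)a+\theta b)|\les |G'(a)|+|G'(b)|$ holds pointwise, and with the elementary inequality $\bigl||a|a-|b|b\bigr|\le(|a|+|b|)|a-b|$ either your square-function route or a direct Bony paraproduct argument becomes routine --- in practice one would verify the estimate for this specific $G$ rather than prove the general $C^1$ statement.
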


\subsection{ Fundamental  properties of the wave equations}
Throughout the paper, by abuse of notations,  we often write
$u(t)=u(t,x)$ for simplicity and
$u(t,r)=u(t,x)$ when $u(t,\cdot)$ is radially symmetric.

Recall the explicit formula for solution to the linear wave equation in 5 dimension,
\beq\label{SL-formula}
\begin{split}
S(t)(f,g)(x)\,=\,&\cos(t|\nabla|)f(x)+\frac{\sin (t|\nabla|)}{|\nabla|}g(x) \\
        =\,& \frac{1}{3\omega_5}\pa_t\Big[\frac{1}{t}\pa_t\Big]\Big( t^3\int_{|y|=1} f(x+ty)d\sigma(y)\Big) + \frac{1}{3\omega_5}\frac{1}{t}\pa_t\Big(t^3 \int_{|y|=1} g(x+ty)d\sigma(y)\Big),
\end{split}
\eeq
where $\omega_5$ is the surface area of the unit sphere in $\R^5$.
When $(f,g)$ is radially symmetric, 
for $t> 0$, \eqref{SL-formula} can be rewritten as
\beq\label{SL-formula-radial}
\begin{split}
S(t)(f,g)(r)\,=\,&\frac1{2r^2}[(r-t)^2f(r-t)+(r+t)^2f(r+t)]
         -\frac{t}{2 r^{3}}\int^{r+t}_{|r-t|} s f(s) ds\\
        & +\frac1{4r^3} \int^{r+t}_{|r-t|} s  (s^2+r^2-t^2) g(s) ds.\\
\end{split}
\eeq
See also
\cite{rammaha1987finite-time, lindblad1996long, Colzani2002Radial}
for the radial solutions to general dimensions linear wave equation.
From the explicit formula \eqref{SL-formula}, we can obtain the following dispersive estimate.

\begin{proposition}[Dispersive estimate]
\beq\label{Dsp-est}
  \norm{S(t)(u_0,u_1)}_{L^\infty(\R^5)}\, \les\, \frac{1}{t^2}\big[\norm{ \nabla^3 u_0}_{L^1(\R^5)} + \norm{\nabla^2 u_1}_{L^1(\R^5)} \big].
\eeq
\end{proposition}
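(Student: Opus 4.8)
The plan is to derive the dispersive estimate \eqref{Dsp-est} directly from the explicit representation formula \eqref{SL-formula}, treating the contributions of $u_0$ and $u_1$ separately. Writing $u(t,x) = S(t)(u_0,u_1)(x) = I_0(t,x) + I_1(t,x)$, where
\begin{equation*}
I_0(t,x) = \frac{1}{3\omega_5}\,\pa_t\!\Big[\frac1t\pa_t\Big]\Big(t^3\!\int_{|y|=1} u_0(x+ty)\,d\sigma(y)\Big),\qquad
I_1(t,x) = \frac{1}{3\omega_5}\,\frac1t\pa_t\Big(t^3\!\int_{|y|=1} u_1(x+ty)\,d\sigma(y)\Big),
\end{equation*}
the first step is to carry out the differentiations in $t$. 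Each $\pa_t$ falling on the spherical average $M(t,x) := \int_{|y|=1} u_0(x+ty)\,d\sigma(y)$ produces a factor carrying one derivative of $u_0$, since $\pa_t M(t,x) = \int_{|y|=1} (y\cdot\nabla u_0)(x+ty)\,d\sigma(y)$; meanwhile the explicit powers of $t$ in front combine so that, after expanding $\pa_t[\tfrac1t\pa_t](t^3 M)$, every surviving term is a product of a power of $t$ (ranging from $t^0$ up to $t^3$, never negative) with a spherical average of $u_0$ or of $\nabla u_0$ or $\nabla^2 u_0$. The analogous computation for $I_1$ yields terms that are powers of $t$ (from $t$ up to $t^3$) times spherical averages of $u_1$ or $\nabla u_1$.

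The second step is to bound each such term. For a term of the shape $t^k \int_{|y|=1}(\nabla^m u_0)(x+ty)\,d\sigma(y)$, I would rewrite the spherical integral as an integral over the sphere of radius $t$ centered at $x$,
\begin{equation*}
t^k\int_{|y|=1}(\nabla^m u_0)(x+ty)\,d\sigma(y) = t^{k-4}\int_{|x-z|=t}(\nabla^m u_0)(z)\,d\sigma(z),
\end{equation*}
and then estimate the surface integral crudely by $\norm{\nabla^m u_0}_{L^1(\R^5)}$ after noting the surface has measure comparable to $t^4$; more precisely one uses that $\int_{|x-z|=t}|h(z)|\,d\sigma(z)$ is not pointwise controlled by $\norm{h}_{L^1}$, so instead one keeps the co-area viewpoint and integrates: the cleanest route is to observe that in \eqref{Dsp-est} it suffices to gain the single power $t^{-2}$, so I will only use those representations in which the net power of $t$ is $t^{-2}$ after accounting for the $t^4$ surface scaling — i.e. $k=2$ for the worst term with two derivatives on $u_0$ — and for the lower-order terms ($k<2$, fewer derivatives) I will interpolate using the extra decay available from the surface-measure scaling together with Bernstein's inequality to convert $\norm{\nabla^j u_0}$ into $\norm{\nabla^3 u_0}_{L^1}$ via Littlewood--Paley truncation (splitting into low and high frequencies and optimizing, since $u_0$ is only assumed in a homogeneous space). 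The $I_1$ terms are handled identically, landing on $\norm{\nabla^2 u_1}_{L^1}$.

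The main obstacle is the bookkeeping of the $\pa_t$-expansion: one must check that no term with a negative power of $t$ survives (negative powers would destroy the estimate near $t=0$, and indeed the estimate as stated is only claimed for the large-$t$ regime where $t^{-2}$ is the relevant decay), and that the top-order terms carry exactly three derivatives on $u_0$ and two on $u_1$ — matching the Besov regularity that feeds into \eqref{SL-formula-radial} and the later analysis. A secondary subtlety is that $u_0, u_1$ live in \emph{homogeneous} spaces, so the pointwise bound on the spherical average cannot be obtained by a single crude application of Hölder; one genuinely needs the Littlewood--Paley decomposition, bounding $P_j u_0$ contributions by $\min(2^{3j}t^{-2}\norm{\nabla^3 u_0}_{L^1}, \dots)$ and summing the geometric series, exactly as in the proof of the Bernstein-type estimates recalled above. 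Once these two points are settled the estimate \eqref{Dsp-est} follows by summing the finitely many terms.
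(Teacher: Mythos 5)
Your first step matches the paper's: expanding the $t$-derivatives in \eqref{SL-formula} produces exactly the five-term sum \eqref{explit}, with each term of the form $t^m\int_{|y|=1}(\nabla^m u_0)(x+ty)\,d\sigma(y)$ ($m=0,1,2$) or $t^{m+1}\int_{|y|=1}(\nabla^m u_1)(x+ty)\,d\sigma(y)$ ($m=0,1$), and no negative powers of $t$. The gap is in your second step. You correctly observe that a spherical integral $\int_{|x-z|=t}|h(z)|\,d\sigma(z)$ is \emph{not} controlled by $\norm{h}_{L^1(\R^5)}$ --- a thin shell of mass can make the surface integral arbitrarily large while the $L^1$ norm stays small --- but then your proposed fix does not resolve this. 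Declaring that the top-order term ($m=2$ for $u_0$, prefactor $t^2$) "already gains $t^{-2}$ after accounting for $t^4$ surface scaling" is precisely the fallacious pointwise surface bound you just ruled out; no power of $t$ in front makes $\int_{|x-z|=t}|\nabla^2 u_0|\,d\sigma(z)\lesssim\norm{\nabla^2 u_0}_{L^1}$ true. And the Littlewood--Paley/Bernstein scheme you sketch for the lower-order terms runs into the fact that $\sum_j\norm{P_j f}_{L^1}$ dominates $\norm{f}_{\dot B^0_{1,1}}$, not $\norm{f}_{L^1}$; you cannot convert $\norm{\nabla^j u_0}_{L^1}$ into $\norm{\nabla^3 u_0}_{L^1}$ this way, and the uniform summability in $j$ that you need is exactly what is missing.

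The mechanism the paper uses, and which your proposal lacks, is the iterated radial fundamental theorem of calculus. For each term, one applies FTC along rays exactly $3-m$ more times (for the $u_0$ pieces) or $2-m$ more times (for $u_1$), writing e.g.
\begin{equation*}
\int_{|y|=1}u_0(x+ty)\,d\sigma(y)\;=\;-\int_t^\infty\!\!\int_s^\infty\!\!\int_\tau^\infty\!\!\int_{|y|=1}\frac{d^3}{d\rho^3}\big[u_0(x+\rho y)\big]\,d\sigma(y)\,d\rho\,d\tau\,ds.
\end{equation*}
In the innermost integral one has $\rho\ge\tau$, so $1\le(\rho/\tau)^4$, and the $\rho$-and-$\sigma$ integral absorbs the Jacobian to give $\tau^{-4}\norm{\nabla^3 u_0}_{L^1(\R^5)}$; this is the step that turns a codimension-one surface integral into a genuine $L^1(\R^5)$ bound, which surface scaling alone cannot do. The two remaining integrations $\int_t^\infty\!\int_s^\infty\tau^{-4}\,d\tau\,ds$ produce $t^{-2}$. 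The same computation, with fewer nested integrals, handles the $m=1$ term (one extra $t^{-1}$ against the prefactor $t$) and, crucially, also the top-order $m=2$ term: one still needs one FTC step there, yielding $t^2\cdot t^{-4}\norm{\nabla^3 u_0}_{L^1}=t^{-2}\norm{\nabla^3 u_0}_{L^1}$. The $u_1$ terms go the same way, ending on $\norm{\nabla^2 u_1}_{L^1}$. Without this FTC ingredient your argument cannot close.
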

\begin{proof}

We give the proof  for completeness.
Similar proof for $3D$ case can be found in \cite{killip2011the}. 
By \eqref{SL-formula}, the free solution $S(t)(u_0,u_1)$ can be rewritten as 
\beq\label{explit}
\begin{split}
  & \frac{1}{\omega_5}\int_{|y|=1} u_0(x+ty) d\sigma(y)
+ \frac{5t}{3\omega_5}\int_{|y|=1} y(\nabla  u_0)(x+ty) d\sigma(y)
+\frac{t^2}{3\omega_5}\int_{|y|=1} y(\nabla^2  u_0)(x+ty) y d\sigma(y)\\
 & + \frac{t}{\omega_5} \int_{|y|=1}  u_1(x+ty) d\sigma(y)
 +  \frac{t^2}{3\omega_5} \int_{|y|=1} y (\nabla  u_1)(x+ty) d\sigma(y),
\end{split}
 \eeq
 which and the fundamental theorem of calculus yield \eqref{Dsp-est}.  For instance,  using polar coordinates, we can estimate the first term of \eqref{explit} as
\beq
\begin{split}
  \left|\frac{1}{\omega_5}\int_{|y|=1} u_0(x+ty) d\sigma(y)\right| \, &=\,  \left|\frac{1}{\omega_5} \int_{t}^\infty \int_{s}^\infty \int_{\tau}^\infty
   \int_{|y|=1}   \frac{d^3}{d\rho^3 }\big[u_0(x+\rho y)\big] d\sigma(y)  d\rho d\tau ds\right|\\
  \,& \les \,  \int_{t}^\infty \int_{s}^\infty    \int_{\tau}^\infty \int_{|y|=1} |\nabla^3 u_0|(x+ \rho y)  d\sigma(y) d\rho  d\tau ds\\
  & \les\, \int_{t}^\infty \int_{s}^\infty \frac{1}{\tau^4}  d\tau ds  \norm{\nabla^3 u_0}_{L^1(\R^5)}\\
  &\les\, \frac{1}{t^2}  \norm{\nabla^3 u_0}_{L^1(\R^5)}.
\end{split}
\eeq
And other terms can be dealt with similarly.

\end{proof}

We recall the Strichartz estimates of wave equation in $\R^5$.
Let $I\subset\R$ be an interval. We denote the spacetime norm $L^q_tW^{s,r}_x(I\times \R^5)$ of a function $u(t,x)$ on $I\times \R^5$ by
 $$ \norm{u}_{L^q_tW^{s,r}_x(I\times \R^5)}\,:=\,\big\|  \norm{u(t,x)}_{W^{s,r}_x(\R^5)}  \big\|_{L^q_t(I)}, $$
 for $s\in\R$, $1\leq q,\,  r\leq \infty.$  We denote that a pair $(q,r)$ of exponents is  admissible, if

%

 \beq
 2\leq q \leq \infty,\, 2\leq r<\infty,\,   \text{ and } \frac1q+\frac{2}{r}\leq 1 .
 \eeq
Moreover, we say $(q,r)$ is wave acceptable, provided
\beq
1\leq q<\infty,\,  2\leq r\leq \infty, \, \frac1q<4 (\frac12-\frac1r),
\eeq or $(q,r)=(\infty,2)$.

\begin{proposition}[Strichartz estimates \cite{lindblad1995on,ginibre1995generalized,keel1998endpoint}]\label{stri-prop}
Let  $\uui\in \HH(\R^5)$ and $(q,r)$, $(\tq,\tr)$ be two admissible pairs.
If $u$ is a weak solution to the wave equation $\pa_{tt} u-\Delta u =F(t,x) $ with initial data $\uui$, then we have
\beq\label{stri}
              \norm{|\nabla|^\rho u}_{L^q_tL^r_x(I\times\R^5)}+\sup_{t\in I}\norm{\uu}_{\HH(\R^5)}
  \lesssim\,\,  \norm{\uui}_{\HH(\R^5)} +\norm{|\nabla|^{-\mu}F}_{L^{\tq'}_tL^{\tr'} (I\times\R^5)},
\eeq
%
provided that
\beq
\rho\,=\, \frac1q+\frac{5}{r}-2  \text{\,\, and\,\, } \mu\,=\,\frac{1}{\tq}+\frac{5}{\tr}-{2}.
\eeq
\end{proposition}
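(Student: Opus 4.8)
The plan is to reduce \eqref{stri} to the source-free Strichartz inequality for the half-wave propagators $e^{\pm it|\nabla|}$ and then recover the full statement by duality and the Christ--Kiselev lemma. Since a weak solution with the prescribed data must coincide with $S(t)\uui+\int_0^t\frac{\sin((t-s)|\nabla|)}{|\nabla|}F(s)\,ds$, and $S(t)\uui=\cos(t|\nabla|)u_0+\frac{\sin(t|\nabla|)}{|\nabla|}u_1$ is a linear combination of $e^{\pm it|\nabla|}$ applied to $u_0$ and to $|\nabla|^{-1}u_1$, it suffices to prove
\beq\label{homog-stri-sketch}
\big\| |\nabla|^{\rho}e^{it|\nabla|}f\big\|_{L^q_tL^r_x(\R\times\R^5)}\,\les\,\norm{f}_{\dot H^{1/2}(\R^5)},\qquad \rho=\frac1q+\frac5r-2,
\eeq
for every admissible $(q,r)$, together with its analogue for the pair $(\tq,\tr)$ with exponent $\mu$. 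As $e^{it|\nabla|}$ commutes with $|\nabla|^{\rho}$, \eqref{homog-stri-sketch} is equivalent to the familiar wave bound $\|e^{it|\nabla|}f\|_{L^q_tL^r_x}\les\|f\|_{\dot H^{s}}$ with $s=\frac52-\frac5r-\frac1q$.

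To prove \eqref{homog-stri-sketch} I would run the $TT^\ast$ argument. The two inputs are the conservation law $\|e^{it|\nabla|}f\|_{L^2_x}=\|f\|_{L^2_x}$ and the frequency-localized dispersive estimate $\|e^{it|\nabla|}P_0f\|_{L^\infty_x(\R^5)}\les |t|^{-2}\|f\|_{L^1_x(\R^5)}$, i.e.\ the same $t^{-2}$ decay as in \eqref{Dsp-est}, read off from \eqref{SL-formula} or by stationary phase; these interpolate to $\|e^{it|\nabla|}P_0f\|_{L^r_x}\les|t|^{-2(1-2/r)}\|f\|_{L^{r'}_x}$. Localizing each propagator to frequency $2^j$, the operator $P_je^{i(t-s)|\nabla|}P_j$ maps $L^{r'}_x\to L^r_x$ with norm comparable to a fixed power of $2^j$ times $(1+2^j|t-s|)^{-2(1-2/r)}$, so the time integral is controlled by Young's and Hardy--Littlewood--Sobolev inequalities for every admissible pair \emph{except} the endpoint $(q,r)=(2,4)$, where the decay exponent equals $1$ and these estimates fail; one then removes the frequency localization by the Littlewood--Paley square-function inequality together with Minkowski's inequality (legitimate since $q,r\ge 2$). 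At the genuine endpoint $(2,4)$ one must instead run the Keel--Tao bilinear argument, decomposing into dyadic time separations and summing with the abstract interpolation lemma. This endpoint is the main obstacle; the rest is routine.

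Finally I would deduce the inhomogeneous bound. Dualizing \eqref{homog-stri-sketch} for $(\tq,\tr)$ gives $\big\|\int_{\R}e^{-is|\nabla|}F(s)\,ds\big\|_{\dot H^{-1/2}}\les\norm{|\nabla|^{-\mu}F}_{L^{\tq'}_tL^{\tr'}_x}$; composing with \eqref{homog-stri-sketch} for $(q,r)$ and matching the powers of $|\nabla|$ --- which close up precisely because $\rho=\frac1q+\frac5r-2$ and $\mu=\frac1{\tq}+\frac5{\tr}-2$ are the exponents for which the two homogeneous estimates hold --- bounds the \emph{untruncated} operator $F\mapsto\int_{\R}|\nabla|^{\rho-1}e^{i(t-s)|\nabla|}F(s)\,ds$ as a map into $L^q_tL^r_x$, with control by $\norm{|\nabla|^{-\mu}F}_{L^{\tq'}_tL^{\tr'}_x}$, and likewise with $e^{-i(t-s)|\nabla|}$, hence for $\frac{\sin((t-s)|\nabla|)}{|\nabla|}$. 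The Christ--Kiselev lemma upgrades this to the retarded integral $\int_0^t$ as long as $(q,r)\ne(2,4)$ or $(\tq,\tr)\ne(2,4)$; at the double endpoint $q=\tq=2$ one substitutes the direct Keel--Tao inhomogeneous endpoint estimate. The term $\sup_{t\in I}\norm{\uu}_{\HH(\R^5)}$ is the admissible instance $(q,r)=(\infty,2)$ (for which $\rho=\frac12$), already contained in the inhomogeneous bound just proven; its homogeneous part reflects the unitarity of $\cos(t|\nabla|)$ and $|\nabla|^{-1}\sin(t|\nabla|)$ on $\HH(\R^5)$, and $\pa_t u$ is treated analogously by differentiating in $t$. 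Combining the three pieces yields \eqref{stri}.
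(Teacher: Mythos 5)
The paper does not prove Proposition 3 --- it is quoted as a known result from \cite{lindblad1995on,ginibre1995generalized,keel1998endpoint} --- and your proposal reproduces exactly the proof strategy used in those references: reduce to the half-wave propagator $e^{\pm it|\nabla|}$, run the $TT^*$ argument from energy conservation together with the frequency-localized $|t|^{-2}$ dispersive bound (correctly identified for $d=5$), handle the Keel--Tao endpoint $(q,r)=(2,4)$ via the bilinear decomposition, dualize for the inhomogeneous term, and upgrade the untruncated operator to the retarded one by Christ--Kiselev away from the double endpoint. This is correct and is the standard route; the only slight imprecision is that the Christ--Kiselev obstruction arises whenever $q=\tq=2$ (i.e.\ $\frac1q+\frac1{\tq}=1$), not only at $(q,r)=(\tq,\tr)=(2,4)$, but you already supply the right fix (the direct Keel--Tao inhomogeneous endpoint) immediately afterward, so the argument closes.
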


\begin{proposition}[Inhomogeneous Strichartz estimates \cite{taggart2008inhomogeneous}]
Suppose the exponent pairs $(q_1,r_1)$ and $(\tq_1,\tr_1)$ are wave
acceptable, and  satisfy the scaling condition

$$\frac{1}{q} +\frac1{\tq}\,=\,2 -2(\frac1{r_1}+\frac1{\tr_1}) $$
and the conditions
$$\frac{1}{q} +\frac1{\tq}<1,\ \   \frac12\leq  \frac{\tilde r_1}{r_1} \leq 2. $$
Let $r\geq r_1,\ \tr\geq\tr_1,\ \rho\in\R$, and such that
$$\rho+5(\frac12-\frac1r)-\frac1q=1-(\tilde{\rho}+  5(\frac12-\frac1{\tr})-\frac1{\tq}).  $$
If $F(t,x)\in L_t^{\tq'}(\R; \dot{B}^{-\tilde{\rho}}_{\tr',2}(\R^5)) $
and  $u$ is a weak solution to the inhomogeneous wave equation
\beq
-\pa_t^2 u+\Delta u=F(t,x),\ \ u(0)=u_t(0)=0,
\eeq
then

\beq\label{inhom-stri}
\norm{u}_{L_t^q(\R; \dot{B}^\rho_{r,2} (\R^5))}  \lesssim \|F(t,x)\|_{ L_t^{\tq'}(\R; \dot{B}^{-\tilde{\rho}}_{\tr',2}(\R^5))}.
\eeq

\end{proposition}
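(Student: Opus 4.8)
This proposition is the wave case of the abstract inhomogeneous Strichartz estimates of \cite{taggart2008inhomogeneous}, so I will only outline the argument I would use. The plan is the usual two-stage scheme: prove a frequency-localized version, then reassemble using the $\ell^2$-valued structure of the Besov spaces. First I would observe that, by the Littlewood--Paley characterization of $\dot B^\rho_{r,2}(\R^5)$ and Minkowski's inequality, it suffices to establish, uniformly in $j\in\Z$, the single-block estimate
\[
2^{j\rho}\norm{P_j u}_{L^q_t L^r_x(\R\times\R^5)}\,\les\,2^{-j\tilde\rho}\norm{P_j F}_{L^{\tilde q'}_t L^{\tilde r'}_x(\R\times\R^5)},
\]
and then take $\ell^2_j$ norms of both sides. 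The rescaling $(t,x)\mapsto(2^{-j}t,2^{-j}x)$ turns the block-$j$ estimate into the block-$0$ estimate precisely because the six exponents obey the stated scaling identity
\[
\rho+5\big(\tfrac12-\tfrac1r\big)-\tfrac1q\,=\,1-\Big(\tilde\rho+5\big(\tfrac12-\tfrac1{\tilde r}\big)-\tfrac1{\tilde q}\Big),
\]
so one is reduced to data with Fourier support in the annulus $|\xi|\sim1$, for which the inner Besov index plays no role and the two Besov norms degenerate to $L^q_tL^r_x$ and $L^{\tilde q'}_tL^{\tilde r'}_x$; the slack $r\ge r_1$, $\tilde r\ge\tilde r_1$ in the hypotheses is then recovered at the end by Bernstein's inequality on these frequency-localized functions.

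Next I would fix the time structure. By Duhamel's formula the solution is $u(t)=-\int_0^t\frac{\sin((t-s)|\nabla|)}{|\nabla|}F(s)\,ds$, so after the reduction above the operator to control is the retarded integral against $W(\tau):=\frac{\sin(\tau|\nabla|)}{|\nabla|}P_0$, which enjoys the energy bound $\norm{W(\tau)}_{L^2\to L^2}\les1$ and, by the dispersive estimate \eqref{Dsp-est} applied to $P_0$-localized data, the decay $\norm{W(\tau)}_{L^1\to L^\infty}\les(1+|\tau|)^{-2}$. In the range where the fixed-time dispersive bound is lossless, interpolating these two bounds and adjusting Lebesgue exponents by Bernstein gives $\norm{W(t-s)P_0F(s)}_{L^r_x}\les|t-s|^{-\gamma}\norm{F(s)}_{L^{\tilde r'}_x}$ with $\gamma=\tfrac1q+\tfrac1{\tilde q}$; inserting this into the $L^q_t$ norm and invoking the one-dimensional Hardy--Littlewood--Sobolev inequality yields the block-$0$ estimate, the order $\gamma$ being exactly the one relating $\tilde q'$ to $q$ because of the scaling condition $\tfrac1q+\tfrac1{\tilde q}=2-2(\tfrac1{r_1}+\tfrac1{\tilde r_1})$, and the hypothesis $\tfrac1q+\tfrac1{\tilde q}<1$ ensuring $\gamma<1$ so that Hardy--Littlewood--Sobolev applies. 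For the remaining, genuinely off-diagonal pairs I would instead run the Foschi--Taggart bilinear argument: pair the retarded operator with $G\in L^{q'}_tL^{r'}_x$, split $\{s<t\}$ into the dyadic shells $\{|t-s|\sim2^k\}$, estimate the $k$-th shell by interpolation of the two fixed-time bounds plus H\"older in time, and sum the geometric series in $k$ — which again converges exactly when $\tfrac1q+\tfrac1{\tilde q}<1$. Either way, undoing the rescaling and taking $\ell^2_j$ norms produces \eqref{inhom-stri}.

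The main obstacle is carrying the fixed-time/shell-wise estimates of the second step over the \emph{full} class of wave-acceptable pairs (rather than the classical admissible ones), and in particular through the off-diagonal case $\tilde r\ne r$: there the Keel--Tao $TT^*$ scheme is unavailable, one cannot pass through a single intermediate Lebesgue exponent, and the correct device is a two-parameter interpolation between the $L^2$-energy bound and the $L^1\!\to\!L^\infty$ dispersive bound together with Bernstein corrections, whose range of validity is governed precisely by the conditions $\tfrac12\le\tilde r_1/r_1\le2$ and $\tfrac1q+\tfrac1{\tilde q}<1$. A secondary technical point is the reassembly in $j$: Minkowski's inequality exchanges $\ell^2_j$ and $L^q_t$ in the favorable direction only when the relevant exponent is $\ge2$, so on the side where $q$ or $\tilde q$ drops below $2$ (only one can, again by $\tfrac1q+\tfrac1{\tilde q}<1$) one needs a further Littlewood--Paley refinement, which is part of why the statement is phrased with $\dot B^\rho_{r,2}$ rather than with $L^r_x$. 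Both points are settled in \cite{taggart2008inhomogeneous}, to which I would refer for the complete verification.
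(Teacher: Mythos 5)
The paper does not prove this proposition at all; it is stated as an external result and cited directly to \cite{taggart2008inhomogeneous}, with no argument given in the text. So there is no in-paper proof to compare your proposal against. Your outline --- Littlewood--Paley reduction plus scaling to a single dyadic block, Duhamel and the truncated half-wave operator $W(\tau)=\sin(\tau|\nabla|)|\nabla|^{-1}P_0$, the $L^2\to L^2$ and dispersive $L^1\to L^\infty$ endpoint bounds with Bernstein corrections, Hardy--Littlewood--Sobolev on the diagonal, and a dyadic-shell bilinear argument in the off-diagonal regime --- is a reasonable summary of the Foschi--Vilela--Taggart scheme that the cited reference implements, and you correctly flag the two genuine pressure points (non-self-dual exponent pairs where $TT^*$ fails, and the $\ell^2_j$/$L^q_t$ exchange that forces the Besov formulation). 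But since you explicitly defer to \cite{taggart2008inhomogeneous} for the actual verification, your proposal is, like the paper, a citation rather than a proof; it is a sound roadmap, not a self-contained argument, and it would need the full bookkeeping of Taggart's Lemma on the shell-wise interpolation to become one.
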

Next, we recall the well-posedness theory and the  perturbation theory of the Cauchy problem  \eqref{NLW}.
\begin{definition}[Solution]\label{solution}  Let $I $ be a time interval such that  $0\in I$.
We call function $u:I\times \R^5\ra \R$ is a (strong) solution to
the Cauchy problem \eqref{NLW} in $I$ if it satisfies
$\uu(0)=\uui$, $$ \uu\in C(I; \HH(\R^5))\,\cap\,L^{3}_{t,x}(I\times \R^5),$$ and
 the integral equation
\beq
u(t) =S(t)\uui -\, \int_0^t S(t-\tau)(0,|u|u(\tau)) d\tau
\eeq
for all $t\in I$.

\end{definition}

\begin{theorem}[Local well-posedness \cite{lindblad1995on, casey-2016}]  \label{lwp}
Let $\uui\in\HH(\R^5)$ with $$\norm{\uui}_{\HH(\R^5)}\leq A . $$
There exists $\delta=\delta(A)>0 $ such that, if
\beq
\norm{S(t)\uui}_{L^{3}_{t,x}([0,T]\times\R^5)} \leq \delta, \text{\,\, for some\,\, } T>0,
\eeq
then there exists a unique solution $u$ to \eqref{NLW} in $[0,T]\times\R^5$, such that
\beq
\sup_{0\leq t\leq T}\norm{(u,u_t)}_{ \HH(\R^5)}+\norm{u}_{L^{3}_{t,x}([0,T]\times\R^5)} \leq C(A).
\eeq
In addition, if $A>0$ is small enough, we can take $T=\infty.$

\end{theorem}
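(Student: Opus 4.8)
The plan is to solve the Duhamel equation
\[
u(t) \,=\, S(t)\uui - \int_0^t S(t-\tau)\big(0,|u|u(\tau)\big)\,d\tau \,=:\, \Phi(u)(t)
\]
by a contraction mapping, working throughout with the scaling‑critical Strichartz pair $(3,3)$. Since the nonlinearity is the power nonlinearity $|u|^pu$ with $p=1$, one has $s_p=\frac52-2=\frac12$, so $\HH(\R^5)$ is exactly the critical data space and $L^3_{t,x}$ is the matching critical Strichartz space: the pair $(3,3)$ is admissible, the exponent $\rho=\frac1q+\frac5r-2$ vanishes at $(q,r)=(3,3)$, and the dual exponent $\mu=\frac1{\tq}+\frac5{\tr}-2$ vanishes at $(\tq,\tr)=(3,3)$. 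Hence Proposition \ref{stri-prop} applied with $(q,r)=(\tq,\tr)=(3,3)$ yields, for any $u$ solving $\pa_{tt}u-\Delta u=F$ on $[0,T]\times\R^5$ with data $\uui$,
\[
\norm{u}_{L^3_{t,x}([0,T]\times\R^5)}+\sup_{0\le t\le T}\norm{(u,u_t)}_{\HH(\R^5)}\,\les\,\norm{\uui}_{\HH(\R^5)}+\norm{F}_{L^{3/2}_{t,x}([0,T]\times\R^5)}.
\]

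Combining this with the elementary pointwise bounds $\bigl|\,|u|u\,\bigr|=|u|^2$ and $\bigl|\,|u|u-|v|v\,\bigr|\les(|u|+|v|)\,|u-v|$ together with H\"older in $(t,x)$ — which give $\norm{|u|u}_{L^{3/2}_{t,x}}\les\norm{u}_{L^3_{t,x}}^2$ and $\norm{|u|u-|v|v}_{L^{3/2}_{t,x}}\les\big(\norm{u}_{L^3_{t,x}}+\norm{v}_{L^3_{t,x}}\big)\norm{u-v}_{L^3_{t,x}}$ — I would run the fixed point in
\[
B\,=\,\Big\{u:\ \norm{u}_{L^3_{t,x}([0,T]\times\R^5)}\le 2\delta,\quad \sup_{0\le t\le T}\norm{(u,u_t)}_{\HH(\R^5)}\le 2CA\Big\}
\]
with metric $d(u,v)=\norm{u-v}_{L^3_{t,x}([0,T]\times\R^5)}$, where $C$ is the absolute constant from the displayed Strichartz inequality. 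This $B$ is a complete metric space: the $L^3_{t,x}$‑ball is closed, and the $\HH$‑bound survives passage to $L^3_{t,x}$‑limits by lower semicontinuity of $\norm{\cdot}_{\HH}$. The hypothesis $\norm{S(t)\uui}_{L^3_{t,x}([0,T]\times\R^5)}\le\delta$ makes $\Phi$ map $B$ into itself once $\delta=\delta(A)$ is chosen so that $4C\delta\le1$ and $4C\delta^2\le CA$, and $\Phi$ is a contraction on $(B,d)$ because $C\big(\norm{u}_{L^3_{t,x}}+\norm{v}_{L^3_{t,x}}\big)\le4C\delta<\frac12$. The unique fixed point $u$ is the desired solution; the bound $\sup_t\norm{(u,u_t)}_{\HH}+\norm{u}_{L^3_{t,x}}\le C(A)$ is read off from $u\in B$, and $u\in C([0,T];\HH(\R^5))$ follows from the Duhamel representation and the Strichartz estimate in the usual way.

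Unconditional uniqueness in $C([0,T];\HH)\cap L^3_{t,x}$ follows by applying the contraction estimate on subintervals: two solutions agree on a short interval about any point on which both have $L^3_{t,x}$‑norm below the contraction threshold, so the set of times where they coincide is nonempty, open and closed, hence all of $[0,T]$. For the last assertion, if $A$ is small enough then Proposition \ref{stri-prop} on all of $\R$ gives $\norm{S(t)\uui}_{L^3_{t,x}(\R\times\R^5)}\les\norm{\uui}_{\HH}\le A\le\delta(A)$, and the argument above runs verbatim with $[0,T]$ replaced by $\R$, giving $T=\infty$.

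I do not expect a substantive obstacle here: the real content is the identification of the critical pair $(3,3)$ together with the trivial nonlinear estimate $\norm{u^2}_{L^{3/2}_{t,x}}\les\norm{u}_{L^3_{t,x}}^2$, which lets the whole fixed point close in a single norm. The only points requiring any care are the completeness of $B$ in the $L^3_{t,x}$‑metric (handled via lower semicontinuity of $\norm{\cdot}_{\HH}$) and the open–closed argument for unconditional uniqueness, both of which are routine.
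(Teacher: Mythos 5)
The paper does not prove Theorem~\ref{lwp}; it imports it from \cite{lindblad1995on,casey-2016} without proof, so there is no in-paper argument to compare against. Your contraction-mapping argument in the critical Strichartz space is the standard Lindblad--Sogge local theory and it is correct: the pair $(3,3)$ is admissible for $d=5$ with $\rho=\mu=0$, so Proposition~\ref{stri-prop} closes in a single norm, and the nonlinear bounds $\norm{|u|u}_{L^{3/2}_{t,x}}=\norm{u}_{L^3_{t,x}}^2$ and $\norm{|u|u-|v|v}_{L^{3/2}_{t,x}}\les(\norm{u}_{L^3_{t,x}}+\norm{v}_{L^3_{t,x}})\norm{u-v}_{L^3_{t,x}}$ give both stability of the ball and contraction.

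Two small points worth tightening. First, the completeness of $B$ under the $L^3_{t,x}$ metric deserves a word more than ``lower semicontinuity of $\norm{\cdot}_{\HH}$'': a Cauchy sequence in $L^3_{t,x}$ carries no direct information about $\pa_t u_n$. The cleanest fix is to run the fixed point on the $L^3_{t,x}$-ball alone, which is trivially complete, and recover $\sup_t\norm{(u,u_t)}_{\HH}\le 2CA$ a posteriori by inserting the fixed point into the Duhamel formula and reapplying Strichartz; alternatively, one invokes weak-$*$ compactness of the bounded set in $L^\infty_t(\dot H^{1/2}\times\dot H^{-1/2})$ and identifies the weak-$*$ limit with the $L^3_{t,x}$-limit via distributions. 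Second, the global statement requires $C_1 A\le\delta(A)$, where $C_1$ is the implicit constant in $\norm{S(t)\uui}_{L^3_{t,x}(\R\times\R^5)}\les\norm{\uui}_{\HH}$, not $A\le\delta(A)$ literally; since in your construction $\delta(A)\sim\min(1,\sqrt{A})$, this does hold once $A$ is small, but the inequality as written should be adjusted.
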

We define $T_+\uui:=\sup\, I$, where  $I$ is the
maximal interval of existence of the solution $u$.

\begin{lemma}[Standard blowup criterion]\label{blow-criterion}
Suppose $u$ is the solution to the Cauchy problem \eqref{NLW}
with initial data $\uui\in\HH(\R^5)$ and $T_+\uui<\infty$. Then we have

\beq
\norm{u}_{L^{3}_{t,x}([0,T_+\uui)\times\R^5)}=\infty.
\eeq
\end{lemma}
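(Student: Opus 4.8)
The plan is to argue by contradiction in the standard way, using the local well-posedness theorem (Theorem \ref{lwp}) together with the Strichartz estimates (Proposition \ref{stri-prop}) to show that a finite $L^3_{t,x}$ norm on $[0,T_+\uui)$ would allow one to extend the solution past $T_+\uui$, contradicting maximality. So suppose for contradiction that $T_+\uui<\infty$ but $\norm{u}_{L^3_{t,x}([0,T_+\uui)\times\R^5)}=M<\infty$. First I would observe that, since $u$ is a solution on $I=[0,T_+\uui)$ in the sense of Definition \ref{solution}, it satisfies the Duhamel formula there, and an application of the Strichartz estimate \eqref{stri} on a subinterval $[t_0,t_1]\subset I$ (with the nonlinearity $|u|u$ placed in a dual admissible norm, estimated via H\"older by $\norm{u}_{L^3_{t,x}}^2\norm{u}_{L^\infty_tL^2_x}$ or a comparable splitting) shows that the full Strichartz norm of $u$ on $[t_0,t_1]$, including $\sup_t\norm{\uu}_{\HH}$, is controlled by $\norm{\uu(t_0)}_{\HH}$ plus a quantity that is small whenever $\norm{u}_{L^3_{t,x}([t_0,t_1]\times\R^5)}$ is small.

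Next, using the finiteness of $M$, I would partition $[0,T_+\uui)$ into finitely many subintervals $J_1,\dots,J_k$ on each of which $\norm{u}_{L^3_{t,x}(J_\ell\times\R^5)}\le\eta$, with $\eta$ chosen small relative to the relevant constants; iterating the above estimate across these subintervals yields a uniform bound $\sup_{t\in[0,T_+\uui)}\norm{\uu(t)}_{\HH(\R^5)}\le A'$ for some finite $A'$ depending only on $\norm{\uui}_{\HH}$ and $M$. In particular $\uu(t)$ has a limit in $\HH(\R^5)$ as $t\uparrow T_+\uui$ (the Duhamel integral converges in $\HH$ because its tail is controlled by the $L^3_{t,x}$ tail, which vanishes), so the solution extends continuously to $t=T_+\uui$ with $\norm{\uu(T_+\uui)}_{\HH}\le A'$. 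Then I would apply Theorem \ref{lwp} with initial time $T_+\uui$ and initial data $\uu(T_+\uui)$: since $\norm{S(t-T_+\uui)\uu(T_+\uui)}_{L^3_{t,x}([T_+\uui,T_+\uui+\tau]\times\R^5)}\to 0$ as $\tau\to 0$ by dominated convergence (the free evolution of an $\HH$ datum lies in $L^3_{t,x}$ globally), this gives a solution on $[T_+\uui,T_+\uui+\tau]$ for some $\tau>0$, which glues with $u$ to produce a solution on $[0,T_+\uui+\tau)$ — contradicting the maximality of $T_+\uui$. Hence $\norm{u}_{L^3_{t,x}([0,T_+\uui)\times\R^5)}=\infty$.

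The main obstacle, and the only point requiring genuine care, is the first step: closing the Strichartz estimate for the nonlinearity $|u|u$ purely in terms of the scattering norm $L^3_{t,x}$ and the energy-type norm $\sup_t\norm{\uu}_{\HH}$, since $(3,3)$ is not itself an admissible pair in the sharp sense and one must choose the auxiliary exponents so that the scaling/derivative conditions $\rho=\frac1q+\frac5r-2$, $\mu=\frac1{\tq}+\frac5{\tr}-2$ in Proposition \ref{stri-prop} are met while H\"older in spacetime distributes $|u|u$ as (two factors in $L^3_{t,x}$) times (one factor in an energy-controlled norm). Once the correct exponent bookkeeping is fixed — this is routine but must be done — the iteration and the continuation argument are entirely standard.
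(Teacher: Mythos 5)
Your proposal is the standard continuation argument, and it matches what the paper intends: the paper gives no proof at all, referring only to the energy-critical case in Kenig's lectures, so your blow-by-blow reconstruction is appropriate and structurally sound. The one place where your ``exponent bookkeeping'' has actually gone astray is the passage you flag as requiring care: you write that $(3,3)$ is not admissible and that $|u|u$ must be estimated by $\norm{u}_{L^3_{t,x}}^2\norm{u}_{L^\infty_tL^2_x}$. Both remarks are artifacts of thinking of the $3$D cubic equation rather than the $5$D quadratic one. In $\R^5$ the pair $(q,r)=(3,3)$ satisfies $\frac1q+\frac2r=1\le 1$, so it \emph{is} admissible with $\rho=\frac13+\frac53-2=0$; likewise $(\tq,\tr)=(3,3)$ is admissible with $\mu=0$, so $L^{3/2}_{t,x}$ is a legitimate dual Strichartz space. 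Since the nonlinearity $F(u)=|u|u$ is quadratic, H\"older gives directly
\begin{equation*}
\norm{F(u)}_{L^{3/2}_{t,x}(J\times\R^5)} \le \norm{u}_{L^3_{t,x}(J\times\R^5)}^2 ,
\end{equation*}
with no energy factor needed; this is exactly the exponent in the perturbation hypothesis of Theorem~\ref{perturbation} and in the solution class of Definition~\ref{solution}. With this correction the Strichartz estimate closes on each small subinterval as
\begin{equation*}
\sup_{t\in J_\ell}\norm{\uu(t)}_{\HH(\R^5)} + \norm{u}_{L^3_{t,x}(J_\ell\times\R^5)}
\lesssim \norm{\uu(t_\ell)}_{\HH(\R^5)} + \norm{u}_{L^3_{t,x}(J_\ell\times\R^5)}^2,
\end{equation*}
and in fact, since you already assume $\norm{u}_{L^3_{t,x}([0,T_+)\times\R^5)}=M<\infty$, you may apply Strichartz once on the whole interval to get the uniform $\HH$ bound without the subinterval iteration; the iteration is only genuinely needed if one wants to prove the a priori $L^3_{t,x}$ bound in the first place. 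The rest of your argument — Cauchy-in-$\HH$ of $\uu(t)$ as $t\uparrow T_+$ controlled by the $L^3$ tail, extension of the datum, smallness of the free evolution on short intervals, and the gluing contradiction with maximality — is correct as written.
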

 The proof is standard and similar to the energy critical case in \cite{Kenig2015Lectures}.
\vskip.3cm

We end this section by recalling the stability lemma for the Cauchy problem \eqref{NLW}, which plays an important role in the Section \ref{profile}.

%
%
%
%
%
%

\begin{theorem}[Perturbation theory \cite{casey-2016}]\label{perturbation}
Let $I\subset\R$ be a time interval with $0\in I$. Let $\uui\in\HH(\R^5)$ and some constants
$M,A,A'>0$ be given. Let $\tu$ be defined on $I\times \R^5$
and satisfy
\beq
\sup_{t\in I} \norm{(\tilde u, \pa_t \tilde u)}_{\HH(\R^5)}\leq A
\eeq
and
\beq
\norm{\tu}_{L^3_{t,x}(I\times\R^5)} \leq M.
\eeq
Assume that
$ \pa_{tt}\tilde u -\Delta \tilde u =- | \tilde u| \tilde u+e$
on $I\times \R^5$,
\beq
\norm{(u_0-\tu(0),u_1-\pa_t\tu(0) )}_{\HH(\R^5)} \leq A'
\eeq
and that
\beq
\norm{e}_{L^{3/2}_{t,x}(I\times\R^5)} + \norm{S(t)[(\tilde u(0),\pa_t \tilde u(0) )-\uui]}_{L^{3}_{t,x}(I\times\R^5)}<\vps.
\eeq
 Then, there exist $\beta>0$ and
$\vps_0=\vps_0(M,A,A')>0$ such that,  for $0<\vps<\vps_0$,  there exists a solution $u$ to \eqref{NLW} in $I$ such that  $(u(0),\pa_t u(0))=\uui$, with
 \beq
\norm{u}_{L^3_{t,x}(I\times\R^5)}\leq C(M,A,A'),
\eeq
and
\beq
\sup_{t\in I}\norm{(\pa_t\tilde u(t), \tilde u(t))-(u,\pa_t u(t))}_{\HH(\R^5)} \leq C(M,A,A')(A'+\vps^\beta).
\eeq
\end{theorem}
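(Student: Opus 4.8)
The plan is to run the standard Bourgain-style bootstrap argument for stability of wave maps / NLW, subdividing the interval $I$ into finitely many subintervals on each of which the $L^3_{t,x}$ norm of $\tu$ is small enough to close a fixed-point estimate, and then patching the resulting error bounds together. First I would set $v=u-\tu$, which solves
\begin{equation*}
\pa_{tt}v-\Delta v=-|\tu+v|(\tu+v)+|\tu|\tu-e,\qquad (v(0),\pa_t v(0))=(u_0-\tu(0),u_1-\pa_t\tu(0)).
\end{equation*}
Writing the Duhamel formula for $v$ relative to the free evolution of its (small) initial data, the nonlinear source splits as $\big(|\tu+v|(\tu+v)-|\tu|\tu\big)$, which is $O(|\tu||v|+|v|^2)$ pointwise, plus the given error $e$. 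By Proposition \ref{stri-prop} with the admissible choices adapted to the $L^3_{t,x}$ scaling (the relevant dual estimate controls $\|v\|_{L^3_{t,x}}$ by $\|v(0),\pa_t v(0)\|_{\HH}$ plus $\|\,|\tu|v+v^2+e\,\|_{L^{3/2}_{t,x}}$), one gets on a subinterval $J$
\begin{equation*}
\|v\|_{L^3_{t,x}(J\times\R^5)}\les A'+\vps+\|\tu\|_{L^3_{t,x}(J)}\|v\|_{L^3_{t,x}(J)}+\|v\|_{L^3_{t,x}(J)}^2.
\end{equation*}

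The key step is the partition: since $\|\tu\|_{L^3_{t,x}(I)}\le M$, choose a small absolute constant $\eta=\eta(A,A')$ and split $I$ into $N=N(M,\eta)$ consecutive intervals $I_1,\dots,I_N$ with $\|\tu\|_{L^3_{t,x}(I_k)}\le\eta$ for each $k$. On $I_1$, the displayed inequality plus a continuity/bootstrap argument gives $\|v\|_{L^3_{t,x}(I_1)}\les A'+\vps$, provided $\vps_0$ is small depending on $A,A'$; then by the Strichartz bound again, $\|(v(t),\pa_t v(t))\|_{\HH}$ stays $\les A'+\vps$ on $I_1$, and in particular at the right endpoint $t_1$ of $I_1$. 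Feeding that bound in as the new "initial data error" on $I_2$ — and noting that the smallness hypothesis $\|S(t)[(\tu(0),\pa_t\tu(0))-\uui]\|_{L^3_{t,x}}<\vps$ propagates to each $I_k$ up to a multiplicative constant, again by Proposition \ref{stri-prop} applied to the free evolution — one iterates. After $N$ steps the accumulated constant is $C(M,A,A')$ and one obtains $\|v\|_{L^3_{t,x}(I)}\le C(M,A,A')(A'+\vps^\beta)$ for a suitable $\beta\in(0,1)$ (the power $\vps^\beta$ rather than $\vps$ absorbs the nonlinear self-interaction $\|v\|^2$ terms through the bootstrap); summing the $\HH$-bounds over the endpoints and taking a supremum gives the stated control on $\sup_{t\in I}\|(\tu,\pa_t\tu)-(u,\pa_t u)\|_{\HH}$. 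Existence of the genuine solution $u$ on all of $I$ with $\|u\|_{L^3_{t,x}(I)}\le C(M,A,A')$ follows from the local theory Theorem \ref{lwp} together with this a priori bound and the blowup criterion Lemma \ref{blow-criterion}.

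The main obstacle is the bookkeeping of how the constants grow through the $N$ iteration steps while keeping the final threshold $\vps_0$ and the exponent $\beta$ clean: one must be careful that at stage $k$ the "error data" $\|(v(t_{k-1}),\pa_t v(t_{k-1}))\|_{\HH}$ is still controlled by something of size $C_{k-1}(A'+\vps)\ll 1$ so that the quadratic-in-$v$ term can again be absorbed by the bootstrap, which forces $\vps_0$ to be chosen only after $N$, hence depending on $M,A,A'$ exactly as claimed. Once the single-interval estimate and this finite induction are set up, everything else — the pointwise difference bound $\big||\tu+v|(\tu+v)-|\tu|\tu\big|\les|\tu||v|+|v|^2$, the Hölder splitting $L^{3/2}_{t,x}\supset L^3_{t,x}\cdot L^3_{t,x}$, and the invocation of the dual Strichartz inequality — is routine. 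Since this is essentially the $5$D transcription of the perturbation lemma already in the literature (e.g.\ \cite{casey-2016,Kenig2015Lectures}), I would in the write-up state it with a short proof pointing to the partition argument and omit the standard fixed-point details.
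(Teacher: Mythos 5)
The paper itself does not prove Theorem~\ref{perturbation}; it is stated with the citation \cite{casey-2016} and left without proof, so there is no internal argument to compare against. Your proposal is the standard subdivision-and-bootstrap approach, which is indeed how such perturbation lemmas are proved, and the overall architecture (split off $v=u-\tu$, partition $I$ into pieces where $\|\tu\|_{L^3_{t,x}}\le\eta$, close a Strichartz estimate on each piece, iterate) is correct.

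There is, however, a genuine gap in the single-subinterval estimate you display, and it is not merely cosmetic. You bound the free evolution of the data error by the generic Strichartz inequality, writing
\[
\norm{v}_{L^3_{t,x}(J\times\R^5)}\les A'+\vps+\norm{\tu}_{L^3_{t,x}(J)}\norm{v}_{L^3_{t,x}(J)}+\norm{v}_{L^3_{t,x}(J)}^2,
\]
where the $A'$ comes from $\norm{S(t)(v(0),\pa_t v(0))}_{L^3_{t,x}}\les\norm{(v(0),\pa_t v(0))}_{\HH}\le A'$. But the theorem does \emph{not} assume $A'$ small: only $\vps<\vps_0(M,A,A')$ is small, and the hypothesis $\norm{S(t)[(\tu(0),\pa_t\tu(0))-\uui]}_{L^3_{t,x}(I)}<\vps$ exists \emph{precisely} so that the free-evolution contribution to the $L^3_{t,x}$ bound is $\vps$, not $A'$. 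With the $A'$ in place, the bootstrap inequality has the form $x\le a+\eta x+Cx^2$ with $a\sim A'$, and absorbing $Cx^2$ requires $a\les 1$; for large $A'$ this fails, and there is no choice of $\vps_0$ that rescues it, since the offending term does not shrink with $\vps$. The correct estimate uses the hypothesis directly to get $\norm{v}_{L^3_{t,x}(J)}\les\vps+\eta\norm{v}_{L^3_{t,x}(J)}+\norm{v}_{L^3_{t,x}(J)}^2$, which closes. Relatedly, when you pass from $I_{k-1}$ to $I_k$ you cannot bound $\norm{S(t-t_{k-1})(v(t_{k-1}),\pa_t v(t_{k-1}))}_{L^3_{t,x}(I_k)}$ via the $\HH$ norm at $t_{k-1}$ (that again produces $A'$); one must instead rewrite that free evolution via Duhamel as $S(t)(v(0),\pa_t v(0))$ plus a Duhamel integral over $[0,t_{k-1}]$, bound the first by $\vps$ using the global-in-$I$ hypothesis, and bound the second by the accumulated $L^{3/2}_{t,x}$ source norms already controlled on $I_1,\dots,I_{k-1}$. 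Your prose remark that ``the smallness hypothesis \dots\ propagates to each $I_k$'' is the right idea, but your displayed inequality contradicts it; $A'$ should appear only in the $\HH$-difference conclusion, never in the $L^3_{t,x}$ bootstrap.
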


\section{Decomposition of the solution and global well-posedness}\label{local-theory}

In this section, we will prove that for any given initial data $\uui\in\BB(\R^5)$, then  the corresponding solution $u$ to the equation \eqref{NLW} is globally well-posed. To prove this, we first show the solution $u$
belongs to some suitable Strichartz-type spaces on a local time interval. Then, we split it into two parts: $u=v+w$.
Based on the inhomogeneous Strichartz estimates \eqref{inhom-stri}, we will derive a decay property  for $v$ and prove that  $w$ is in the energy space $\HL(\R^5)$. 
We remark that  the constants in `` $\les$ '' in this section depend upon $\norm{\uui}_{\BB(\R^5)} $.

For the sake of simplicity, we denote $F(u)=|u|u$. 
Recall that $u_\lambda $ is also a solution to the Cauchy problem \eqref{NLW} with initial data
$(\lambda^2u_0(\lambda x),\lambda^3 u_1(\lambda x))$, where
\beq\label{scal}
u_\lambda(t,x)=\lambda^2u(\lambda t,\lambda x).
\eeq
Given $\uui\in\BB(\R^5)$, for any $\eta>0$, there exists $j_0=j_0(u_0,u_1,\eta)<\infty$ such that
\beq\label{}
\sum_{j\geq j_0} 2^{2j}\norm{P_j u_0}_{L^1(\R^5)}<\eta.
\eeq
Replace  $u$ by $u_\lambda$ for 
$\lambda =2^{-j_0}$, then we have

\beq\label{low-frq}
\sum_{j\geq 0} 2^{3j}\norm{P_j u_0}_{L^1(\R^5)}+  \sum_{j\geq 0} 2^{2j}\norm{P_j u_1}_{L^1(\R^5)}<\eta.
\eeq

\begin{lemma}
\label{local-1}
Let $\ep_0>0$ be a small constant and $\eta\ll \ep_0$. If the initial data $\uui\in\BB(\R^5)$ satisfies \eqref{low-frq} and $u$ is the solution to \eqref{NLW} with initial data $\uui$ given by Theorem \ref{lwp},  then there exists $$\delta=\delta(\ep_0,\norm{\uui}_{\BB})>0$$ such that
\beq
\norm{ u}_{L_{t,x}^3 ([-\delta,\delta]\times\R^5)}  \leq \sum_{j\in \Z} \norm{P_j u}_{L^3_{t,x} ([-\delta,\delta]\times\R^5)}\les\ep_0,
\eeq
\beq\label{u-BB}
 \norm{u}_{L_t^\infty ([-\delta,\delta];\bt (\R^5) )} \les \sum_{j\in\Z} \norm{P_j u}_{L^\infty_t\dot H^{1/2}([-\delta,\delta]\times\R^5)} \les 1. 
\eeq
\end{lemma}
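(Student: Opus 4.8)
The plan is to run a fixed-point/bootstrap argument at the level of the Littlewood--Paley pieces $P_j u$, exploiting the gain \eqref{low-frq} in the high frequencies of the data and smallness of the time interval to absorb the low frequencies. First I would set up the Duhamel formula $u(t) = S(t)\uui - \int_0^t S(t-\tau)(0,F(u(\tau)))\,d\tau$ and apply $P_j$ to both sides. For the linear part, I would estimate $\norm{P_j S(t)\uui}_{L^3_{t,x}([-\delta,\delta]\times\R^5)}$ and $\norm{P_j S(t)\uui}_{L^\infty_t \dot H^{1/2}}$ using the Strichartz estimates of Proposition \ref{stri-prop}: for the high frequencies $j \geq 0$ one uses Bernstein to pass from $L^1$-based data norms (controlled by \eqref{low-frq}, hence by $\eta$) to the $\HH$-level; on a short interval of length $\delta$ the $L^3_t$ norm picks up a factor $\delta^{1/3}$ (or one uses the fact that $\norm{S(t)\uui}_{L^3_{t,x}(\R\times\R^5)}<\infty$ and hence is small on small intervals via dominated convergence). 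The net effect should be that the full linear contribution is $\lesssim \eta + o_\delta(1) \ll \ep_0$.

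Next I would handle the Duhamel/nonlinear term. Writing $F(u)=|u|u$ and using that $\|\,|\nabla|^{1/2}(|u|u)\|$ is controlled by $C^1$-fractional chain rule (Lemma \ref{C1-chain}) together with the $L^3_{t,x}$ norm — schematically $\norm{|\nabla|^{1/2} F(u)}_{L^{3/2}_t L^{30/23}_x}\lesssim \norm{u}_{L^3_{t,x}}\norm{|\nabla|^{1/2}u}_{L^3_t L^{30/11}_x}$ for appropriate admissible exponents — one closes the estimate: the quantity $N(\delta):=\sum_j \norm{P_j u}_{L^3_{t,x}([-\delta,\delta]\times\R^5)} + \sum_j \norm{P_j u}_{L^\infty_t\dot H^{1/2}([-\delta,\delta]\times\R^5)}$ satisfies $N(\delta)\lesssim \eta + o_\delta(1) + N(\delta)^2$ on the interval where $u$ exists (guaranteed by Theorem \ref{lwp} once the linear flow is small, which holds by the previous paragraph for $\delta$ small). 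A standard continuity argument in $\delta$ then forces $N(\delta)\lesssim \ep_0$.

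A point requiring a little care is that summing the pieces $\sum_j \norm{P_j u}$ rather than taking $\ell^2_j$ is what makes the Besov (rather than Sobolev) structure propagate; this is exactly why the hypothesis is $\BB$ and why the data smallness is stated in the $\ell^1$-summed form \eqref{low-frq}. I would keep every estimate at the level of a single $P_j$ and only sum at the end, being careful that the frequency-localized Strichartz and chain-rule bounds produce summable-in-$j$ quantities (the high-frequency sum converging because of \eqref{low-frq} after rescaling, the low-frequency sum — $j<0$ — converging because the $\dot B^3_{1,1}\times\dot B^2_{1,1}$ norm already controls $\sum_{j<0}$ with room to spare, and because the short time interval supplies the decay needed there).

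\emph{Main obstacle.} The delicate point, and the one flagged in Remark 2 of the introduction, is the choice of exponents in the nonlinear estimate: the natural admissible pairs in $5$D lead to a logarithmic divergence when one tries to control $\norm{u}_{L^{4/3}_t L^4_x}$-type norms directly, so the chain-rule/Strichartz step must be arranged to stay strictly inside the admissible range (or use the inhomogeneous Strichartz estimate \eqref{inhom-stri}) so that the $\delta$-power is genuinely positive and the nonlinear term is quadratic-with-small-constant rather than merely borderline. Verifying the numerology of the exponents — checking $\rho$, $\mu$, the scaling relation, and that both the input and output pairs are admissible — is where the real work lies; once that bookkeeping is done, the bootstrap is routine.
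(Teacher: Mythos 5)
Your approach matches the paper's: apply $P_j$ to the Duhamel formula, estimate the linear part by Strichartz plus Bernstein with the smallness coming from \eqref{low-frq} on high frequencies and shortness of the interval on low frequencies, control the nonlinearity via the $C^1$ fractional chain rule (Lemma~\ref{C1-chain}), and close with a continuity argument summed $\ell^1$ in $j$ — the paper does this through the explicit recurrence $a_k\lesssim b_k+\ep_0\sum_j 2^{-\frac14|j-k|}a_j$ together with Young's inequality, which is the precise mechanism behind the ``be careful the bounds are summable in $j$'' step you gesture at. One correction to your ``main obstacle'' paragraph: the logarithmic endpoint issue you flag is the one that arises in Lemma~\ref{local-2} (the $t^{\frac{2r-5}{r}}$ decay and the $L^{5/4}_tL^{25/6}_x$ bound, which require the inhomogeneous Strichartz estimates \eqref{inhom-stri}), not in the present lemma — here the paper stays strictly inside the admissible Strichartz range, using $2^{\frac14 k}\|P_k u\|_{L^6_tL^{12/5}_x}$ and the dual $L^2_tL^{4/3}_x$, so there is no borderline exponent to negotiate; your sample exponents $(q,r)=(3,30/11)$ are not wave-admissible in $\R^5$, so the bookkeeping you defer would actually need to land on pairs such as the paper's.
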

\begin{proof} 
By Strichartz's estimates in Proposition \ref{stri-prop}  and \eqref{low-frq}, we obtain

\beq
\norm{S(t)P_{\geq 0}\uui }_{L^{3}_{t,x}(\R \times \R^5)} \leq \frac12 \ep_0.
\eeq
On the other hand, for every $t\in\R$,  by Bernstein, we have
\beq
\norm{S(t)P_{\leq 0}\uui}_{L^3_{x}(\R^5)} \,\les\, 1.
\eeq
Hence, taking $\delta$ small enough, we have,
\beq
\norm{S(t) \uui }_{L^{3}_{t,x}([-\delta,\delta] \times \R^5)} \leq \ep_0.
\eeq
Then, by the Strichartz estimates, we have
\beq
\norm{u}_{L^3_{t,x}([-\delta,\delta] \times \R^5)} \les \norm{S(t)(u_0,u_1)}_{L^{3}_{t,x}([-\delta,\delta] \times \R^5)}
 + \norm{u}_{L^3_{t,x}([-\delta,\delta] \times \R^5)}^2,
\eeq
from which,  by a standard  continuity  argument, we deduce that
\beq
\norm{u}_{L^{3}_{t,x}([-\delta,\delta] \times \R^5)}\les \ep_0.
\eeq

Let
\beq
a_k=\norm{P_k u}_{L^{3}_{t,x}([-\delta,\delta] \times \R^5)} + 2^{\frac{1}{2}k}\norm{P_k u}_{L^\infty_tL^2_x([-\delta,\delta] \times \R^5)}+2^{\frac{1}{4}k}\norm{P_k u}_{L^6_tL^\frac{12}{5}_x([-\delta,\delta] \times \R^5)},
\eeq
\beq
b_k=2^{\frac{k}{2}}\norm{P_ku_0}_{L^2_x}+2^{-\frac{k}2} \norm{P_k u_1}_{L^2_x}.
\eeq
By the Young inequality,  it suffices to show there is a recurrence relation
\beq\label{rerc-1}
a_k\les b_k+\ep_0 \sum_{j} 2^{-\frac{1}{4} |j-k|} a_j.
\eeq
To prove this, making use of  the Strichartz estimates, we have
\beq\label{rerc-3}
a_k\les b_k +2^\frac{k}{4} \norm{P_k F(u)}_{L^2_t L^\frac43_x ([0,\delta]\times\R^5)}.
\eeq

First, we consider the low frequency part of the second term in the right hand of \eqref{rerc-3}. By Lemma \ref{C1-chain} and H\"older, we have
\beq\begin{split}
       &\norm{P_k F(u_{\leq k})}_{L^2_t L^\frac43_x ([-\delta,\delta] \times \R^5)}\\
  \les &\, 2^{-\frac12 k}\norm{P_k |\nabla_x|^{\frac12}  F(u_{\leq k}) }_{L^2_t L^\frac43_x ([-\delta,\delta] \times \R^5)}\\
  \les &\,2^{-\frac12 k} \norm{u}_{L^3_{t,x}([-\delta,\delta] \times \R^5)} \norm{|\nabla|^{\frac12} (P_{\leq k}u)}_{L^6_tL^\frac{12}5_x([-\delta,\delta] \times \R^5)}\\
  \les &\, \norm{u}_{L^3_{t,x} ([-\delta,\delta] \times \R^5)}
  \sum_{j\leq k} 2^{-\frac{k-j}{2}} \norm{P_{j}u}_{L^6_tL^\frac{12}5_x([-\delta,\delta] \times \R^5)},
\end{split}
\eeq
from which it follows that
\beq\label{rec-1}
\begin{split}
  &2^{\frac{k}4}\norm{P_k F(u_{\leq k}) }_{L^2_t L^\frac43_x([-\delta,\delta] \times \R^5) }  \\
\les& \ep_0 \sum_{j\leq k} 2^{-\frac{k-j}4} 2^{\frac14 j }  \norm{P_j u}_{L^6_tL^\frac{12}{5}_x ([-\delta,\delta] \times \R^5)} \les \ep_0 \sum_{j\leq k} 2^{-\frac{k-j}4} a_j.
\end{split}
\eeq
On the other hand, by H\"older's inequality,
\beq\label{rec-2}\begin{split}
& 2^{\frac{k}4} \norm{P_{k} (F(u)- F(u_{\leq k}) )}_{L^2_tL^{\frac43}_x ([-\delta,\delta] \times \R^5)}\\
\les & \norm{u}_{L^3_{t,x} ([-\delta,\delta] \times \R^5)} 2^{\frac14 k} \norm{P_{\geq k+1} u}_{L^6_tL^\frac{12}{5}_x ([-\delta,\delta] \times \R^5)}\\
\les & \ep_0 \sum_{j\geq k+1} 2^{-\frac{j-k}{4}} 2^{\frac14j} \norm{ P_j u}_{L^6_tL^\frac{12}{5}_x ([-\delta,\delta] \times \R^5)} \les  \ep_0 \sum_{j\geq k+1} 2^{-\frac{j-k}{4}} a_j.
\end{split}
\eeq
Then the recurrence relation \eqref{rerc-1}  follows from \eqref{rec-1} and \eqref{rec-2}.
\end{proof}

Remark  that by the inequality \eqref{u-BB}, the inequalities \eqref{rec-1} and \eqref{rec-2} yield that
\beq\label{Pk Fu}
\sum_{k\in \mathbb{Z} } 2^\frac{k}{4} \norm{P_k \big(F(u)\big)}_{L^2_t L^\frac43_x ([0,\delta]\times\R^5)}\,\les\, \ep_0 .
\eeq


As an application of Lemma \ref{local-1} and the radial Sobolev
inequality \eqref{ra-sblv-infty-2}, we will see that the solution $u$ posses
some space decay property  in the region
$\{|x|\geq |t|+C\}$ for some large constant $C>0$.  
Let $\chi(x)$ be a smooth cutoff function
such that $\chi(x)=1$ for $|x|\leq \frac12 $ and $\chi(x)=0$ for $|x|\geq 1$.
By Lemma \ref{besov-a}, there exists a dyadic integer $R=R(u_0,u_1,\ep_0)$ such that
$
\norm{(1-\chi(\frac{x}{R}))\uui}_{\bb(\R^5)} \leq \ep_0.
$
Then by scaling, we have
\beq
\norm{\big(1-\chi(2x)\big)    \big( (2R)^2 u_0(2Rx),  (2R)^3 u_1(2Rx) \big) }_{\bb(\R^5)} \leq \ep_0.
\eeq
By abuse of notations, we will still use  $\uui$ to represent the initial data $\big( (2R)^2 u_0(2Rx),  (2R)^3 u_1(2Rx) \big)$.
Then we have,
\beq\label{ui-infy-small}
\norm{\big(1-\chi(2x)\big)\uui}_{\bb(\R^5)} \leq \ep_0.
\eeq
In addition, by Lemma \ref{local-1}, we have
\beq
\norm{u}_{L^3_{t,x}([-\frac{\delta}{2R},\frac{\delta}{2R}]\times \R^5)}\les \ep_0,
\eeq

\beq\label{u_H<B}
\norm{u}_{L_t^\infty ([-\frac{\delta}{2R},\frac{\delta}{2R}] ;\dot{B}_{2,1}^{\frac12}(\R^5) )}\,\les\, 1.
\eeq

\begin{lemma}\label{u-outcone}
Let $J\subset \R$ be an interval such that  $u$ is a solution to \eqref{NLW} on $J$. Then we have

  \beq\label{u-out-cone}
\norm{u}_{L^3_{t,x}(\{(t,x)\in J\times\R^5:\,|x|\geq |t|+\frac12 \})} + \sup_{t\in J } \norm{|x|^2 u(t,x)}_{L^\infty_x(\{x\in\R^5:\,|x|\geq |t|+\frac12\})} \les \ep_0.
\eeq

\end{lemma}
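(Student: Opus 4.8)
The plan is to estimate the two quantities in \eqref{u-out-cone} separately, exploiting finite propagation speed and the smallness \eqref{ui-infy-small} of the exterior part of the data. First I would split $u = u_{\mathrm{in}} + u_{\mathrm{out}}$, where $u_{\mathrm{in}}$ solves \eqref{NLW} with data $\chi(2x)\uui$ and $u_{\mathrm{out}}$ is a perturbation. Since $\chi(2x)\uui$ is supported in $\{|x|\le \tfrac12\}$, by finite speed of propagation the solution with that data stays supported in the backward light cone $\{|x|\le |t|+\tfrac12\}$, hence contributes nothing to the region $\{|x|\ge |t|+\tfrac12\}$; the only thing that matters there is the evolution of the exterior data $(1-\chi(2x))\uui$. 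So effectively it suffices to bound, in $L^3_{t,x}$ over $J\times\R^5$ and in the weighted $L^\infty$, the solution $\tilde u$ to \eqref{NLW} with data $(1-\chi(2x))\uui$, whose $\bb$-norm is $\le \eps_0$ by \eqref{ui-infy-small}.

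Next I would run the small-data/continuity argument in the weighted norm. Define, for the interval $J$,
\[
N(J) \,:=\, \norm{\tilde u}_{L^3_{t,x}(J\times\R^5)} \,+\, \sup_{t\in J}\,\norm{|x|^2\,\tilde u(t,x)}_{L^\infty_x(\R^5)}.
\]
By the Strichartz estimate of Proposition \ref{stri-prop} and the small-data theory (Theorem \ref{lwp}), the $L^3_{t,x}$ piece is $\lesssim \eps_0$ once $\norm{(1-\chi(2x))\uui}_{\bb}\le\eps_0$. For the weighted piece, I would apply the radial Sobolev inequality \eqref{ra-sblv-infty-2}, namely $\norm{|x|^2 f}_{L^\infty}\lesssim \norm{f}_{\dot B^{1/2}_{2,1}}$, to $\tilde u(t)$ at each time $t$, reducing matters to bounding $\sup_{t\in J}\norm{\tilde u(t)}_{\dot B^{1/2}_{2,1}}$. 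Feeding the Duhamel formula into this, the linear part is controlled by the data norm $\le \eps_0$ (using the multiplier boundedness of $\cos(t|\nabla|)$ and $\sin(t|\nabla|)/|\nabla|$ on $\dot B^{1/2}_{2,1}$, which follows from \eqref{SL-formula} and Bernstein), and the nonlinear part is controlled by $\norm{|\tilde u|\tilde u}_{L^1_t \dot B^{1/2}_{2,1}}\lesssim \norm{\tilde u}_{L^2_t\dot B^{1/2}_{2,1,?}}\cdots$ — more precisely by the same Strichartz/bilinear bookkeeping as in Lemma \ref{local-1}, giving a term $\lesssim \eps_0\, N(J)^{\alpha}$ for some power $\alpha\ge 1$. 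This yields $N(J) \lesssim \eps_0 + \eps_0 N(J)^\alpha$, and since $N$ is continuous in the endpoints of $J$ and vanishes as $J$ shrinks to $\{0\}$ (where it equals the data contribution $\lesssim\eps_0$), a standard bootstrap closes to give $N(J)\lesssim \eps_0$.

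Finally I would transfer back: in the region $\{|x|\ge|t|+\tfrac12\}$ one has $u = \tilde u$ (the interior-data part being supported in the complementary cone), so \eqref{u-out-cone} follows. The main obstacle I anticipate is the nonlinear estimate in the weighted norm: the weight $|x|^2$ does not interact cleanly with the Littlewood--Paley decomposition, so rather than estimating $|x|^2 u$ directly one must route everything through $\dot B^{1/2}_{2,1}$ via \eqref{ra-sblv-infty-2} and then handle the cubic-type term $F(\tilde u)=|\tilde u|\tilde u$ in $L^1_t\dot B^{1/2}_{2,1}$ — exactly the kind of fractional-Leibniz/Strichartz estimate carried out in Lemma \ref{local-1}, but one must be careful that the relevant spacetime norms are available on $J$ (not merely on the short interval $[-\delta,\delta]$), which is why the argument is phrased as an a priori bound closed by continuity in $J$ rather than a fixed-time estimate. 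A secondary subtlety is making the finite-speed-of-propagation splitting rigorous at the level of the Duhamel/Strichartz solution, which is standard but should be invoked explicitly.
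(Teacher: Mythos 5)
Your proposal is correct and follows essentially the same route as the paper: introduce the auxiliary solution (the paper's $U$, your $\tilde u$) with the exterior data $(1-\chi(2x))\uui$, run the Lemma~\ref{local-1}-style recurrence to get global smallness $\norm{U}_{L^3_{t,x}}+\norm{U}_{L^\infty_t\dot B^{1/2}_{2,1}}\lesssim\ep_0$ from \eqref{ui-infy-small}, invoke finite speed of propagation to conclude $u=U$ on $\{|x|\ge|t|+\frac12\}$, and finish with \eqref{ra-sblv-infty-2}. The only cosmetic wobble is the opening phrase ``split $u=u_{\mathrm{in}}+u_{\mathrm{out}}$'' --- for a nonlinear equation this is not a genuine decomposition into solutions --- but you immediately recast it as the correct comparison of $u$ with $\tilde u$ via domain of dependence, so the argument as actually carried out matches the paper.
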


\begin{proof}
Let $U(t,x)$ be the solution to \eqref{NLW} with initial data
$\big(1-\chi(2x)  \big)(u_0(x)  ,u_1(x) ).$
Employing Theorem \ref{lwp} and arguing by similar arguments in Lemma \ref{local-1},
one can deduce  \eqref{rerc-3} 
when $u$ is replaced by $U$ and $[-\delta,\delta]$ is replaced by $\R$. Thus, we have
%
%
%

\beq\label{U}
\norm{U}_{L^3_{t,x}(\R\times\R^5)}+\norm{U }_{L^\infty_t \dot B^{\frac12}_{2,1}(\R\times\R^5) } \, \les\, \ep_0.
\eeq
Due to the finite propagation speed
property of the wave equation \eqref{NLW},  we have $u(t,x)=U(t,x)$ when $|x|\geq |t|+\frac12.$
Then \eqref{u-out-cone} follows from \eqref{U} and  the radial Sobolev inequality \eqref{ra-sblv-infty-2}.
%

\end{proof}

Next, we want to show the following local properties, which will play an important
role in the following Subsection \ref{energy part}.  Unlike the case of  the three dimension in \cite{dodson-2016}, we will make use of the
inhomogeneous Strichartz estimates \eqref{inhom-stri} to conquer the difficulties caused by the higher order decay of time.

%
\begin{lemma}\label{local-2}
If $\ep_0$ is sufficiently small and $\delta$  is  
given in Lemma \ref{local-1},
then,  for  $3<r<4$, we have
\beq\label{sup-2r-5/r,r}
\sup_{-\frac{2\delta}{2R}<t<\frac{2\delta}{2R}}   t^{ \frac{2r-5}{r} }  \norm{u}_{L^{r}_x(\R^5)} +\norm{u}_{L^\frac{5}{4}_tL^{\frac{25}{6}}_x ([0,\frac{\delta}{2R}]\times \R^5) }  \,\les\, 1.
\eeq
\end{lemma}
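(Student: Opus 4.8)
The plan is to run a Strichartz/bootstrap argument analogous to the one in Lemma~\ref{local-1}, but now tracking the two quantities appearing in \eqref{sup-2r-5/r,r} simultaneously, and to invoke the inhomogeneous Strichartz estimate \eqref{inhom-stri} precisely where the dispersive decay of order $t^{-2}$ would otherwise produce a logarithmic loss. First I would set $\delta' = \delta/(2R)$ and record that, by \eqref{SL-formula} together with the dispersive estimate \eqref{Dsp-est} and the frequency-localized bound \eqref{low-frq} (after the scaling normalization), the free evolution $S(t)\uui$ already satisfies the weighted sup-bound $\sup_{|t|<2\delta'} t^{(2r-5)/r}\|S(t)\uui\|_{L^r_x} \lesssim 1$ for $3<r<4$: interpolate the dispersive $L^\infty$ decay against the uniform $L^2$ (or $\dot B^{1/2}_{2,1}$) control from \eqref{u_H<B}, with the $L^1\to L^\infty$ decay rate $t^{-2}$ matching the exponent $(2r-5)/r$ at the endpoint $r=4$ and being strictly subcritical for $r<4$, so no logarithm appears. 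Similarly $\|S(t)\uui\|_{L^{5/4}_tL^{25/6}_x([0,\delta']\times\R^5)}\lesssim 1$ since $(5/4,25/6)$ is (after checking) a wave-acceptable/admissible exponent reachable by Strichartz from $\HH$ data, using again the smallness/summability in \eqref{low-frq} for the high frequencies and Bernstein for the low ones.

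Next I would treat the Duhamel term $\int_0^t S(t-\tau)(0,F(u(\tau)))\,d\tau$ with $F(u)=|u|u$. The key input is the already-established bound \eqref{Pk Fu}, namely $\sum_k 2^{k/4}\|P_kF(u)\|_{L^2_tL^{4/3}_x([0,\delta']\times\R^5)}\lesssim \ep_0$, which says $F(u)\in L^2_t\dot B^{1/4}_{4/3,1}$ on the local interval with small norm. For the spacetime piece $\|u\|_{L^{5/4}_tL^{25/6}_x}$ I would apply the inhomogeneous Strichartz estimate \eqref{inhom-stri}: one checks that the pair corresponding to output $(q,r)=(5/4,25/6)$ and the pair dual to the input lying in $L^2_tL^{4/3}_x$ with a $\dot B^{1/4}$ derivative satisfy the scaling identity $\rho + 5(\tfrac12-\tfrac1r)-\tfrac1q = 1-(\tilde\rho + 5(\tfrac12-\tfrac1{\tr})-\tfrac1{\tq})$ and the acceptability conditions $\tfrac1q+\tfrac1{\tq}<1$, $\tfrac12\le \tr/r\le 2$; this is exactly the place where using \eqref{inhom-stri} rather than the homogeneous estimate composed with the dispersive decay saves the logarithm. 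For the weighted pointwise term $t^{(2r-5)/r}\|u\|_{L^r_x}$ I would estimate the Duhamel integral directly via \eqref{Dsp-est} applied to $S(t-\tau)(0,F(u(\tau)))$ — which gives $(t-\tau)^{-2}\|\nabla^2 F(u(\tau))\|_{L^1_x}$ — interpolated against an $L^2_x$-type bound, and bound the resulting time integral $\int_0^t (t-\tau)^{-\theta}\,\|F(u(\tau))\|_{\text{(Besov)}}\,d\tau$ by Hardy–Littlewood–Sobolev in $t$, using \eqref{Pk Fu} to supply the small $L^2_t$-norm of the source in the right Besov space; the exponents are arranged so that the time-singularity exponent $\theta$ is strictly less than $1$ for $r<4$, again avoiding the borderline. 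Closing the estimate is then a routine continuity/bootstrap argument on the (finite, by Lemma~\ref{local-1}) interval $[0,\delta']$, with the quadratic nonlinearity contributing a factor $\ep_0$ that is absorbed for $\ep_0$ small; the interval $[-2\delta',0]$ is handled by time reversal.

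The main obstacle I anticipate is purely bookkeeping of exponents: verifying that $(5/4,25/6)$ together with the source space $L^2_tL^{4/3}_x$ carrying a $\dot B^{1/4}_{4/3,2}$ (or $\ell^1$) derivative genuinely fits the hypotheses of the inhomogeneous Strichartz estimate — that is, checking the scaling relation $\tfrac1q+\tfrac1{\tq}=2-2(\tfrac1{r_1}+\tfrac1{\tr_1})$, the strict inequality $\tfrac1q+\tfrac1{\tq}<1$, the ratio constraint $\tfrac12\le \tr_1/r_1\le 2$, and the derivative-balance identity — and confirming that at the worst endpoint $r\to 4$ the exponent $(2r-5)/r\to 3/4$ remains compatible with the $t^{-2}$ dispersive rate without a logarithmic divergence. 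Once those exponent computations are pinned down, the rest follows the template of Lemma~\ref{local-1} essentially verbatim. I would also need the elementary observation that $\|\nabla^2 F(u)\|_{L^1_x}\lesssim \|\nabla^2 u\|_{L^r_x}\|u\|_{L^{r'}_x}+\|\nabla u\|_{L^a_x}^2$ type product estimates, combined with the $\dot B^{1/2}_{2,1}$-regularity of $u$ from \eqref{u_H<B}, close the loop on the weighted norm; this is standard but must be stated carefully so the small parameter $\ep_0$ multiplies the bootstrapped quantity.
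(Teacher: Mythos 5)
The overall architecture you propose (interpolate the dispersive $L^1\to L^\infty$ decay against an $L^2$-type bound for the free part; use inhomogeneous Strichartz for the Duhamel contribution to the $L^{5/4}_tL^{25/6}_x$ norm; bound the weighted sup separately) matches the paper. However, the concrete exponent choice you make for the inhomogeneous Strichartz input does not work, and this is not merely bookkeeping: with output pair $(q,r)=(5/4,25/6)$ and forcing in $L^2_tL^{4/3}_x$ one has $\tq=2$, so $\frac1q+\frac1{\tq}=\frac45+\frac12=\frac{13}{10}>1$, which violates the hypothesis $\frac1q+\frac1{\tq}<1$ of Taggart's estimate \eqref{inhom-stri}. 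You flagged this as the place you were worried about, and the worry is well founded. The paper avoids this by \emph{not} feeding in the $L^2_t\dot B^{1/4}_{4/3,1}$ control from \eqref{Pk Fu} here; instead it estimates
\[
\norm{\textstyle\int_0^t S(t-\tau)(0,F(u))\,d\tau}_{L^{5/4}_tL^{25/6}_x}\les \norm{|\nabla|^{1/4}F(u)}_{L^{30/29}_tL^{300/197}_x},
\]
for which $\tq=30$ gives $\frac1q+\frac1{\tq}=\frac45+\frac1{30}=\frac56<1$, and then closes via the $C^1$ fractional chain rule and H\"older using $\|u\|_{L^\infty_t\dot H^{1/2}}$, $\|u\|_{L^3_{t,x}}$, and $\|u\|_{L^{5/4}_tL^{25/6}_x}$ itself, picking up a factor $\|u\|_{L^3_{t,x}}^{1/2}\les\ep_0^{1/2}$ that makes the bootstrap close.

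For the weighted pointwise bound $\sup_t t^{(2r-5)/r}\|u\|_{L^r_x}$, the paper does not use Hardy--Littlewood--Sobolev in time at all. It splits $\int_0^t=\int_{(1-c)t}^t+\int_0^{(1-c)t}$ for a small parameter $c\in(0,1)$. On the near-time piece it applies the dispersive estimate, pulls the weight through, and produces a factor $c^{4/r-1}$ times $\big(\sup_t t^{(2r-5)/r}\|u\|_{L^r_x}\big)^{(r-1)/(2r-5)}$; since $(r-1)/(2r-5)>1$ for $3<r<4$, taking $c$ small closes this term by a continuity argument. On the far-time piece $(t-\tau)\gtrsim ct$, so one does H\"older in $\tau$ (not HLS), obtaining $c^{4/r-2}\|\,|\nabla|^{2-6/r}F(u)\|_{L^{r'}_{t,x}}$, and then interpolates the $L^{r/(2r-5)}_tL^r_x$ norm of $u$ between $L^3_{t,x}$ and the already-controlled $L^{5/4}_tL^{25/6}_x$, together with $L^\infty_t\dot H^{1/2}$. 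Your HLS route would need $\int_0^t(t-\tau)^{-\theta}G(\tau)\,d\tau$ to exhibit the sharp $t^{-(2r-5)/r}$ weighted decay pointwise in $t$, which is a Lorentz-refined statement rather than the usual $L^p\to L^q$ HLS; it is not clear that this can be made to work without essentially re-deriving the $(1-c)t$ split. Also, at $r=4$ the dispersive $L^{r'}\to L^r$ decay is $(t-\tau)^{-(2-4/r)}=(t-\tau)^{-1}$, which is where the log-loss threatens (not because $(2r-5)/r=3/4$ ``matches'' $t^{-2}$); the restriction $r<4$ keeps $2-4/r<1$, and the $c$-split gives the room the paper needs.

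In short: your plan is directionally right, but the specific forcing space you chose for \eqref{inhom-stri} is inadmissible, and your HLS-based handling of the weighted sup is neither the paper's argument nor obviously repairable without the paper's near/far-time decomposition.
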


We remark that for  $3<r<\infty$, the space $L^{\frac{r}{2r-5}}_t L_x^r(\R\times\R^5)$ is $\dot{H}^{\frac12}$-critical but not admissible.  

\begin{proof}

First, we consider the estimates for the linear part. Utilizing  dispersive estimate \eqref{Dsp-est},
we have
\beq\label{dispersive-2}
\norm{S(t)P_j\uui}_{L^\infty_x(\R^5)} \les \frac{1}{t^2} \big[2^{3j}\norm{P_j u_0}_{L^1(\R^5)} +2^{2j}\norm{P_j u_1}_{L^1(\R^5)} \big].
\eeq
By Bernstein, we have
\beq
\norm{S(t)P_j\uui}_{L^2_x(\R^5)} \les \norm{P_j u_0}_{L^2_x(\R^5)}+2^{-j}\norm{P_j u_1}_{L^2_x(\R^5)} \les 2^{\frac52j}\norm{P_j u_0}_{L^1(\R^5)} +2^{\frac32j}\norm{P_j u_1}_{L^1(\R^5)}.
\eeq
Interpolating this inequality with the estimate \eqref{dispersive-2} yields that,

\beq\label{r-1}
\norm{S(t)P_j\uui}_{L^r_x (\R^5) } \,\les\,  t^{-2(1-\frac2r) }  2^{-\frac{j}{r}  }  \big[2^{3j}\norm{P_j u_0}_{L^1(\R^5)}+2^{2j}\norm{P_j u_1}_{L^1(\R^5)}\big].
\eeq
On the other hand, for $r\geq \frac52$, employing  Bernstein' s estimates,  we have
\beq\label{r-2}\begin{split}
  \norm{S(t)P_j\uui}_{L^r_x (\R^5)}  &\,\les\,  2^{5(\frac12-\frac1r)j} \norm{S(t)P_j\uui}_{L^{2}_x (\R^5)}\\
 & \,\les\, 2^{(2-\frac5r)j} \big[2^{3j}\norm{P_j u_0}_{L^1(\R^5)}+2^{2j}\norm{P_j u_1}_{L^1(\R^5)}\big].
\end{split}
\eeq
This estimate and  \eqref{r-1} yield that, for $r\geq \frac52$,
\beq\label{2r-5/r,r}
\sup_{t\in\R} t^{ \frac{2r-5}{r} }  \norm{S(t)\uui}_{L_x^r(\R^5)}  +\norm{S(t)\uui}_{L^{\frac{r}{2r-5}}_t L_x^r(\R\times\R^5)}    \les  \norm{\uui}_{\BB(\R^5)}.
\eeq

By the reversal property of the wave equation, it suffices to prove  \eqref{sup-2r-5/r,r} for $t\geq0$.
Using the inhomogeneous Strichartz estimates \eqref{inhom-stri}, Lemma \ref{C1-chain}, Lemma \ref{local-1}, and H\"older, we have

\beq
\begin{split}
&\norm{\int^t_0 S(t-\tau)(0,F(u(\tau))) d\tau}_{L^\frac{5}{4}_tL^{\frac{25}{6}}_x ([0,\frac{\delta}{2R}]\times \R^5)}\\
& \les \norm{|\nabla|^{\frac14} F(u)}_{L_t^\frac{30}{29} L_x^{ \frac{300}{197} } ([0,\frac{\delta}{2R}]\times \R^5 ) }\\
&\les\norm{|\nabla|^\frac12 u}_{L_t^\infty L^2_x ([0,\frac{\delta}{2R}]\times \R^5)}^{1/2} \norm{u}_{L^3_{t,x} ([0,\frac{\delta}{2R}]\times \R^5)}^{1/2} \norm{u}_{L^\frac{5}{4}_tL^{\frac{25}{6}}_x ([0,\frac{\delta}{2R}]\times \R^5) }.
\end{split}
\eeq
This estimates together with the estimate \eqref{2r-5/r,r} yields
\beq\label{5/4,25/6}
\norm{u}_{L^\frac{5}{4}_tL^{\frac{25}{6}}_x ([0,\frac{\delta}{2R}]\times \R^5)} \,\les\,1,
\eeq
provided $0<\ep_0\ll \min(1,\norm{\uui}_{\BB(\R^5)}^{-1})$. 

Let $c\in(0,1)$  to be chosen later. 
First, employing the dispersive estimate \eqref{Dsp-est}, Lemma \ref{C1-chain}, and interpolation,  for $r\in(3,4)$, we have
\beq\label{0,c}
\begin{split}
         & \sup_{t\in[0,\frac{\delta}{2R}]} t^{\frac{2r-5}{r}} \norm{\int_{(1-c)t}^t S(t-\tau)(0, F(u(\tau)) ) d\tau}_{L^r(\R^5)} \\
  \les\,  &\,  \sup_{t\in[0,\frac{\delta}{2R}]}  t^{\frac{2r-5}{r}}  \int_{(1-c)t}^t \frac{1}{(t-\tau)^{2-\frac4r}  }
  \norm{ |\nabla|^{2-\frac6r} F(u(\tau)) }_{L^{r'}(\R^5)}  d\tau\\
  \les\, &\, \sup_{t\in[0,\frac{\delta}{2R}]}  t^{\frac{2r-5}{r}}  \int_{(1-c)t}^t \frac{1}{(t-\tau)^{2-\frac4r}  } \norm{u(\tau)}_{L^{r}(\R^5)}^{\frac{r-1}{2r-5}}
  \norm{u(\tau)}^{4-\frac{12}r}_{\dot{H}^{\frac12}_x (\R^5)  }
  \norm{u(\tau)}_{ L^\frac52_x(\R^5)}^{(\frac{12}r-2-\frac{r-1}{2r-5}) }
    d\tau \\
\les \,   &\, \Big(\sup_{t\in[0,\frac{\delta}{2R}]}   t^{\frac{2r-5}{r}}  \norm{u(t)}_{L^{r}(\R^5)}\Big)^{\frac{r-1}{2r-5} }\ \      \norm{u}^{4-\frac{12}r}_{L^\infty_t\dot{H}^{\frac12}_x ([0,\frac{\delta}{2R}]\times \R^5) }  \norm{u}_{L^\infty_t L^\frac52_x([0,\frac{\delta}{2R}]\times \R^5) }^{(\frac{12}r-2-\frac{r-1}{2r-5})}  \cdot \int_{(1-c)t}^t \frac{1}{(t-\tau)^{2-\frac4r} } t^{1 -\frac4r} d\tau\\
\les\, &\, c^{4/r-1} \Big(\sup_{t\in[0,\frac{\delta}{2R}]}   t^{\frac{2r-5}{r}}  \norm{u(t)}_{L^{r}(\R^5)}\Big)^{\frac{r-1}{2r-5}} .
\end{split}
\eeq

For the remainder part, we utilize the dispersive estimate \eqref{Dsp-est}, Lemma \ref{C1-chain}, the H\"older inequality, and interpolation  to obtain
\beq
\begin{split}
          &\, t^{\frac{2r-5}{r}} \norm{\int_0^{(1-c)t} S(t-\tau)(0, F(u(\tau)) ) d\tau}_{L^r(\R^5)}\\
  \les \, &\, t^{\frac{2r-5}{r}} \int_0^{(1-c)t} \frac{1}{(t-\tau)^{2-\frac4r}  }
  \norm{ |\nabla|^{2-\frac6r} F(u(\tau)) }_{L^{r'}_x(\R^5)}  d\tau\\
  \les\, &\, c^{\frac4r-2} \norm{\|\nabla|^{2-\frac6r} F(u)}_{L^{r'}_{t,x}(\R\times\R^5)} \\
\les\,    &\, c^{\frac4r-2}   \norm{u}^{}_{L^{\frac{r}{2r-5}}_tL^{r}_x (\R\times\R^5) } \norm{u}^{4-\frac{12}r}_{L^\infty_t \dot H^{\frac12}_x (\R\times\R^5)} \norm{u}^{\frac{12}r-3}_{L^3_{t,x} (\R\times\R^5)} \\
\les\, &\, c^{\frac4r-2}  \ep_0 ^{\frac{159}{7r}-\frac{39}{7}}  \label{1-c,c}, 
\end{split}
\eeq
where in the last step we used the fact that
 $$ \norm{u}^{}_{L^{\frac{r}{2r-5}}_tL^{r}_x ([0,\frac{\delta}{2R}]\times \R^5)}\les \norm{u}_{L^3_{t,x} ([0,\frac{\delta}{2R}]\times \R^5)}^{\frac{75}{7}\left(\frac1r-\frac{6}{25}\right)} \norm{u}^{\frac{25}{7}\left(1-\frac3r\right)}_{L^\frac{5}{4}_tL^{\frac{25}{6}}_x ([0,\frac{\delta}{2R}]\times \R^5)}\,\les\,\ep_0^{\frac{75}{7}(\frac1r-\frac{6}{25})} .$$
Hence, by \eqref{2r-5/r,r} and \eqref{5/4,25/6}-\eqref{1-c,c}, taking $c>0 $ small enough and using a continuity method, 
there exists $ \ep_0=\ep_0(\norm{\uui}_{\BB})>0$ such that
\beq
\sup_{t\in[0,\frac{\delta}{2R}]}  t^{\frac{2r-5}{r}} \norm{u}_{L^r_x} \les 1, 
 \text{ for } 3<r<4.
\eeq

\end{proof}

We denote $\delta_1=\frac{\delta}{2R}$ for simplicity.
Let
$\psi\in C^\infty_0(\R^5)$  supported in  $|x|\leq \frac{\delta_1}{10}$  and  $\psi(x)=1$ when $|x|\leq \frac{\delta_1}{20}$.
Then we can assume that $|(\nabla \psi)(x)| \les \frac{1}{\delta_1}$.
For $t\geq \delta_1$, we split $u(t,x)=v(t,x)+w(t,x),$ where
\beq\label{v(t)}
v(t) =S(t)(\psi u_0, \psi u_1) -\int_0^{\delta_1/10 } S(t-\tau)(0,\psi F(u(\tau))) d\tau.
\eeq
We will prove that $v$ has a decay property and $w$ is with finite energy.

\subsection{The decay part of the solution $u$}
\begin{lemma}\label{v-property}
 For $t\geq \delta_1$, we have
 \beq\label{v-dispersive}
 \norm{v(t)}_{L^\infty_x(\R^5 )}\les \delta_1^{-\frac12} t^{-2}.
 \eeq
In addition, we have
\beq\label{v-stri}
\norm{v}_{L^\infty_t(\R; \dot B^{\frac12}_{2,1}(\R^5)) }+ \norm{v}_{L^3_{t,x}(\R\times\R^5)}\les  \delta_1^{-\frac12}.
 \eeq

\end{lemma}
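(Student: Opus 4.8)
The plan is to prove the two assertions in Lemma \ref{v-property} by estimating the linear evolution and the truncated Duhamel term in \eqref{v(t)} separately, exploiting the fact that both the initial data $(\psi u_0,\psi u_1)$ and the forcing $\psi F(u)$ are spatially localized in $|x|\lesssim \delta_1$, and that the Duhamel integral runs only over the short time interval $[0,\delta_1/10]$.

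First I would establish the pointwise decay \eqref{v-dispersive}. For the linear piece $S(t)(\psi u_0,\psi u_1)$, I apply the dispersive estimate \eqref{Dsp-est}, which gives a bound by $t^{-2}\big(\norm{\nabla^3(\psi u_0)}_{L^1}+\norm{\nabla^2(\psi u_1)}_{L^1}\big)$; since $\psi$ is supported in a ball of radius $\lesssim\delta_1$ with $|\nabla\psi|\lesssim\delta_1^{-1}$ (and correspondingly higher derivatives $|\nabla^k\psi|\lesssim\delta_1^{-k}$), together with the Cauchy-Schwarz inequality on the support one reduces these $L^1$ norms to weighted $L^2$ norms of $u_0,u_1$ near the origin. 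Concretely, the worst weights in $\nabla^3(\psi u_0)$ involve up to three derivatives landing on $\psi$; distributing via Leibniz and using \eqref{u_01-L^2}-type control of $|x|^{1/2}\pa_r u_0$, $|x|^{-1/2}u_0$, $|x|^{1/2}u_1$ (which are all $\lesssim 1$ after our normalization), combined with $|\text{supp }\psi|^{1/2}\lesssim\delta_1^{5/2}$ and the factors $\delta_1^{-k}$, one should land on a net power $\delta_1^{-1/2}$. For the Duhamel term, I again invoke \eqref{Dsp-est} inside the integral: for $t\geq\delta_1$ and $\tau\in[0,\delta_1/10]$ one has $t-\tau\sim t$, so the $(t-\tau)^{-2}$ factor comes out as $t^{-2}$, and one is left with $\int_0^{\delta_1/10}\big(\norm{\nabla^3(\psi F(u(\tau)))}_{L^1}+\norm{\nabla^2(\psi F(u(\tau)))}_{L^1}\big)d\tau$; bounding $F(u)=|u|u$ and its derivatives via the $C^1$-fractional chain rule (Lemma \ref{C1-chain}) and Hölder, using the local Strichartz bounds from Lemma \ref{local-1} and Lemma \ref{local-2} on $[0,\delta_1]$ (in particular \eqref{u-BB}, \eqref{5/4,25/6}, and the $L^r_x$-decay estimate \eqref{sup-2r-5/r,r}), and paying attention to the $\delta_1$-dependence of the localization, should again produce a bound $\lesssim\delta_1^{-1/2}t^{-2}$.

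Next I would prove \eqref{v-stri}. The Besov bound $\norm{v}_{L^\infty_t\dot B^{1/2}_{2,1}}\lesssim\delta_1^{-1/2}$ follows from the Strichartz estimate in Proposition \ref{stri-prop}, applied at the level of each Littlewood–Paley piece and summed in $\ell^1$: the linear part contributes $\norm{(\psi u_0,\psi u_1)}_{\dot B^{1/2}_{2,1}\times\dot B^{-1/2}_{2,1}}$, which by Lemma \ref{besov-a} (with $R\sim\delta_1$, rescaled) is comparable to a weighted $L^2$ quantity controlled by $\delta_1^{-1/2}$ just as above, while the Duhamel part contributes $\sum_k 2^{k/4}\norm{P_k(\psi F(u))}_{L^2_tL^{4/3}_x([0,\delta_1/10]\times\R^5)}$, which is controlled via the estimate \eqref{Pk Fu} and the multiplier bounds of Lemma \ref{besov-a} by $\ep_0$, hence absorbed. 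For the $L^3_{t,x}$ bound I would split at time $t=1$ (or at $t\sim\delta_1$): on the bounded region $|t|\lesssim 1$ one uses the $L^3_{t,x}$ Strichartz bound directly as in Lemma \ref{local-1}, and on $|t|\gtrsim\delta_1$ one interpolates the pointwise decay \eqref{v-dispersive}, which gives $\norm{v(t)}_{L^\infty_x}\lesssim\delta_1^{-1/2}t^{-2}$, against the uniform $\dot B^{1/2}_{2,1}\hookrightarrow \dot H^{1/2}$ bound (so $\norm{v(t)}_{L^{5/2}_x}\lesssim\delta_1^{-1/2}$ by Sobolev embedding in $\R^5$) to obtain $\norm{v(t)}_{L^3_x}\lesssim \delta_1^{-1/2}t^{-2\theta}$ for a suitable $\theta$ with $3\theta\cdot 2>1$, making the $t$-integral converge; a cleaner alternative is to use the free Strichartz bound \eqref{2r-5/r,r} together with the short-time, localized Duhamel structure. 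I expect the main obstacle to be bookkeeping the precise power of $\delta_1$ through the many Leibniz expansions of $\nabla^k(\psi\,\cdot\,)$ and matching it against the weighted radial Sobolev inequalities of Lemma \ref{rad-sobolev} and \eqref{u_01-L^2}, so that nothing worse than $\delta_1^{-1/2}$ appears; the dispersive and Strichartz machinery itself is routine given the earlier lemmas.
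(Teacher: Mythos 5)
Your decomposition into linear and Duhamel parts and your treatment of the linear piece are sound: a Leibniz expansion of $\nabla^3(\psi u_0)$ and $\nabla^2(\psi u_1)$ together with the radial $L^1$ weights of Lemma \ref{rad-sobolev} does control $\norm{\nabla^3(\psi u_0)}_{L^1}+\norm{\nabla^2(\psi u_1)}_{L^1}\lesssim 1$ (in fact with no $\delta_1$ loss). However, your approach to the Duhamel contribution in \eqref{v-dispersive} has a genuine gap. Applying \eqref{Dsp-est} under the integral forces you to estimate $\norm{\nabla^3(\psi F(u(\tau)))}_{L^1}$ and $\norm{\nabla^2(\psi F(u(\tau)))}_{L^1}$, but $F(u)=|u|u$ is only $C^1$ and $u$ lives only at $\dot B^{1/2}_{2,1}\subset\dot H^{1/2}$ regularity, so no combination of Lemma \ref{C1-chain} (which caps at one derivative) and the Strichartz bounds \eqref{u-BB}, \eqref{5/4,25/6}, \eqref{sup-2r-5/r,r} can produce two or three full spatial derivatives of $F(u)$ in $L^1$. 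This is not a bookkeeping issue but a hard derivative deficit of about $3/2$ derivatives. The paper sidesteps it entirely: since $\psi F(u)$ is supported in $\{|x|\lesssim\delta_1,\ \tau\lesssim\delta_1\}$, strong Huygens in odd dimension forces $\int_0^{\delta_1/10}S(t-\tau)(0,\psi F(u(\tau)))d\tau$ to be supported where $|x|\gtrsim t$ for $t\geq\delta_1$, and then the radial Sobolev inequality \eqref{ra-sblv-infty-2}, $\norm{|x|^2 f}_{L^\infty}\lesssim\norm{f}_{\dot B^{1/2}_{2,1}}$, converts this spatial localization into the $t^{-2}$ decay while costing only a $\dot B^{1/2}_{2,1}$ norm of the Duhamel data. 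That Besov norm is then estimated by Strichartz split into low frequencies (Bernstein, giving $\delta_1^{1/3}\ep_0^2$) and high frequencies (commutator $[P_j,\psi]$ giving $\delta_1^{-1/2}$, plus $\psi P_j F(u)$ controlled by \eqref{Pk Fu}), which is where the $\delta_1^{-1/2}$ actually originates. You would need to adopt this Huygens/radial-Sobolev mechanism; the raw dispersive estimate cannot work here.

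A minor secondary issue: for the $L^3_{t,x}$ piece of \eqref{v-stri}, the interpolation you propose between $\norm{v(t)}_{L^{5/2}_x}$ and $\norm{v(t)}_{L^\infty_x}\lesssim\delta_1^{-1/2}t^{-2}$ gives $\norm{v(t)}_{L^3_x}\lesssim\delta_1^{-1/2}t^{-1/3}$, and $\int_{\delta_1}^\infty t^{-1}\,dt$ diverges, so this route does not close. Your stated "cleaner alternative" via \eqref{2r-5/r,r} and the short-time localized Duhamel structure is in fact what the paper does (it writes the Duhamel integral over $[0,\delta_1/10]$ as a free evolution of fixed data and applies Strichartz), and that is the way to finish \eqref{v-stri}.
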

\begin{proof}

We first estimate the linear part of $v$. By the Huygens principle, the radial Sobolev inequality \eqref{ra-sblv-infty-2} and Lemma \ref{besov-a}, we have, for $t\geq\delta_1$
\beq\label{v-free-dispersive-infty}
\norm{S(t)(\psi u_0, \psi u_1)}_{L^\infty_x(\R^5)} \les \frac{1}{t^{2}}  \norm{ \uui }_{\dot{B}^\frac12_{2,1}\times \dot{B}^{-\frac12}_{2,1}} \les \frac{1}{t^{2}}. 
\eeq

For the second part of $v$ and $t\geq \delta_1$, using the Radial Sobolev inequality \eqref{ra-sblv-infty-2}, the Huygens principle and the  Strichartz estimates, we obtain
\begin{align}
        &\,\norm{\int_0^{\delta_1/10 } S(t-\tau)(0,\psi F(u(\tau))) d\tau}_{L^{\infty}_x(\R^5)}  \nonumber\\
  \les\,  &  \, \frac{1}{t^{2}}  \norm{ \int^{\delta_1/10}_0 S(t-\tau)(0,\tc F(u(\tau))) d\tau }_{\dot B^{\frac12}_{2,1}(\R^5)} \nonumber\\
  \les &\,\frac{1}{t^2}\norm{\int^{\delta_1/10}_0 \frac{\sin (\tau |\nabla|)}{|\nabla|} (\psi F(u(\tau))) d\tau }_{\dot B^{\frac12}_{2,1} }
+\frac{1}{t^2}\norm{\int^{\delta_1/10}_0 \frac{\cos (\tau |\nabla|)}{|\nabla|} (\psi F(u(\tau))) d\tau }_{\dot B^{\frac12}_{2,1}}\label{v-inf-duhamel}\\
\les\,  & \,  \frac{1}{t^{2}} \sum_{j< 0} 2^{-\frac12j} \norm{P_j\Big[\psi F(u(\tau))\Big]}_{L^1_tL^2_x([0,\delta_1/10]\times\R^5)}  \label{v-inf-duham-low}  \\
&\,+\,\frac1{t^2} \sum_{j\geq 0}2^{\frac14j} \norm{P_j\Big[\psi F(u(\tau))\Big]}_{L^{2}_tL^{\frac43}_x([0,\delta_1/10]\times\R^5)}\label{v-inf-duham-high}
\end{align}
For the low frequency part  \eqref{v-inf-duham-low},  
by Bernstein's estimates and H\"older's inequality,   we have 

\beq
 t^2  \eqref{v-inf-duham-low}
  \les    \sum_{j<0}2^{\frac{j}3} \norm{F(u)}_{L^1_tL^{\frac{3}{2}}_{x}([0,\delta_1/10]\times\R^5)} \les \delta_1^{\frac13 } \norm{u}^2_{L^3_{t,x} ([0,\delta_1/10 ]\times \R^5) }\les 1. 
\label{v-duhamel-Linf-3}
\eeq
For \eqref{v-inf-duham-high},
it suffices to  estimate 
\begin{align}
&   \sum_{j\geq 0} 2^{\frac14j}
\norm{\big[P_j, \psi \big]   F(u(\tau))}_{L^{2}_tL^{\frac43}_x([0,\delta_1/10]\times\R^5) }  \label{v-inf-high-1}  \\
+ & \sum_{j\geq 0} 2^{\frac14j}   \norm{ \psi P_j F(u(\tau))}_{L^{2}_tL^{\frac43}_x([0,\delta_1/10]\times\R^5)} . \label{v-inf-high-2}
\end{align}
For \eqref{v-inf-high-1}, by commutator estimates, the  Young inequality, the Sobolev embedding and Lemma \ref{local-1}, we have
\beq
\begin{split}
  \eqref{v-inf-high-1} \les  &  \sum_{j\geq 0} 2^{-\frac34j }\delta_1^{-1} \norm{Q_j F(u(\tau))}_{L^2_\tau L^{\frac43}_x([0,\delta_1/10]\times\R^5)}  \\
    \les &   \delta_1^{-1} \sum_{j\geq 0} 2^{-\frac34j} 2^{\frac14j} \norm{ F(u(\tau))}_{L^2_\tau L^{5/4}_x([0,\delta_1/10]\times\R^5)} \\
\les & \delta_1^{-\frac12}  \norm{u}^2_{L^\infty_tL^\frac52_x ([0,\delta_1/10]\times\R^5) }\les\delta_1^{-\frac12},
\end{split}\label{v-duhamel-high-1}
\eeq
where  $$Q_j f(x) = 2^{6j} \int_{\R^5} |y| \phi(2^jy)    |f|(x-y)  dy $$
and in the first inequality we used the mean value theorem. 
For \eqref{v-inf-high-2}, by the estimate \eqref{Pk Fu}, we have
\beq
\eqref{v-inf-high-2}  \,\les\,
 \sum_{j\geq 0} 2^{\frac14 j} \norm{P_j F(u) }_{L^2_\tau L^\frac43_x}\,\les\, \ep_0 .\label{v-duhamel-Linf-2}
\eeq
Hence, by \eqref{v-free-dispersive-infty}-\eqref{v-duhamel-Linf-2}, we  have
$\norm{v(t)}_{L^\infty_x(\R^5)} \les \delta_1^{-\frac12} t^{-2}. $

\vskip0.2cm
Now we  consider \eqref{v-stri}.
 For simplicity, we write 
 $$\norm{v}_{S(\R)} \,:=\,\norm{v}_{L^3_{t,x}(\R\times\R^5)}+\norm{v}_{L^\infty_t(\R  ; \dot B^{\frac12}_{2,1}(\R^5)) }.$$
 For the linear part, by the  Strichartz estimates and Lemma \ref{besov-a},  we have
\beq\label{v-free-L3}
\norm{S(t)(\psi u_0,\psi u_1)}_{S(\R)} 
\les   \norm{ \uui }_{\bb(\R^5) }\, \les\, 1.
\eeq
By the Strichartz estimates and repeating the arguments that deal with \eqref{v-inf-duhamel}, we have

\beq\nonumber
\begin{split}
        &\,\norm{\int_0^{\delta_1/10 } S(t-\tau) (0, \psi F(u(\tau))) d\tau}_{S(\R)}   \\
  \les & \,\norm{\int^{\delta_1/10}_0 \frac{\sin (\tau |\nabla|)}{|\nabla|} (\psi F(u(\tau))) d\tau }_{\dot B^{\frac12}_{2,1}(\R^5) } +\norm{\int^{\delta_1/10}_0 \frac{\cos (\tau |\nabla|)}{|\nabla|} (\psi F(u(\tau))) d\tau }_{\dot B^{\frac12}_{2,1}(\R^5) }\\
  \les\, &\delta_1^{-\frac12}.
\end{split}
\eeq
This completes the proof.
\end{proof}

\subsection{The energy part of the solution $u$}\label{energy part}

\begin{lemma}\label{w-energy}
We have

\vskip-0.2cm
\beq
\norm{\overrightarrow{w}(\delta_1)}_{\HL(\R^5)} \les \delta_1^{-1/2}.%
\eeq
\end{lemma}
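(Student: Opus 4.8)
The strategy is to estimate the $\HL$-norm of $\vec{w}(\delta_1) = \vec{u}(\delta_1) - \vec{v}(\delta_1)$ by exploiting the explicit structure $w(t) = u(t) - v(t)$ at the single time $t = \delta_1$, where $v$ is given by \eqref{v(t)}. Since $u$ solves the Duhamel equation and $v$ is the free evolution of the localized data $(\psi u_0,\psi u_1)$ corrected by the localized Duhamel term on $[0,\delta_1/10]$, we have
\beq
w(\delta_1) = S(\delta_1)\big((1-\psi)u_0,(1-\psi)u_1\big) - \int_0^{\delta_1/10} S(\delta_1-\tau)\big(0,(1-\psi)F(u(\tau))\big)\,d\tau - \int_{\delta_1/10}^{\delta_1} S(\delta_1-\tau)\big(0,F(u(\tau))\big)\,d\tau.
\eeq
First I would treat the free term: because $(1-\psi)$ vanishes on $|x|\leq \delta_1/20$, the data $(1-\psi)u_0$ is supported in $|x|\geq \delta_1/20$, and on this region the weighted bounds \eqref{u_01-L^2} (after the rescaling that produced $\delta_1$) give, schematically, $\|(1-\psi)u_0\|_{\dot H^1}^2 + \|(1-\psi)u_1\|_{L^2}^2 \lesssim \delta_1^{-1}$, since each missing power of $|x|\sim \delta_1$ near the inner boundary costs $\delta_1^{-1}$ against the scale-invariant $\dot B^{1/2}_{2,1}\times\dot B^{-1/2}_{2,1}$ control; the energy-norm of the free evolution is then conserved and bounded by $\delta_1^{-1/2}$.

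Next I would handle the two Duhamel integrals. For the inner piece $\int_0^{\delta_1/10}$, I would apply the energy Strichartz estimate \eqref{stri} with the dual Strichartz norm on the right: the spacetime output is controlled by $\| (1-\psi)F(u)\|_{L^{q'}_t L^{r'}_x([0,\delta_1/10]\times\R^5)}$ for a suitable admissible exponent, and then the $C^1$ chain rule (Lemma \ref{C1-chain}) together with the local bounds \eqref{u-BB}, Lemma \ref{local-1}, and the Besov control \eqref{Pk Fu} give a bound of order $\ep_0$ or $\delta_1^{-1/2}$; the cutoff $(1-\psi)$ either is harmless or is dispensed with by moving to the full $F(u)$ and using boundedness of $u$ in the relevant norms. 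For the outer piece $\int_{\delta_1/10}^{\delta_1}$, which integrates over a region where $|t-\tau|$ and $t$ are both comparable to $\delta_1$, I would again use \eqref{stri} on $[\delta_1/10,\delta_1]$ and control $\|F(u)\|$ in the dual norm using the Strichartz-type bounds from Lemmas \ref{local-1} and \ref{local-2}, picking up the factor $\delta_1^{-1/2}$ from the $\dot H^{1/2}$-critical estimate \eqref{2r-5/r,r} converted to the energy level.

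The main obstacle I anticipate is the bookkeeping of the cutoff $(1-\psi)$ in the nonlinear Duhamel terms: multiplication by $1-\psi$ does not commute with $|\nabla|^{1/2}$ (or $|\nabla|$), so as in the proof of Lemma \ref{v-property} one must split into a commutator term, handled by a $Q_j$-type operator and the mean value theorem with the bound $|\nabla\psi|\lesssim \delta_1^{-1}$, and a term where $(1-\psi)$ sits outside the projection, handled by \eqref{Pk Fu} or Lemma \ref{local-1}. The delicate point is to verify that all of these contributions genuinely close at the energy level $\HL$ (rather than only $\HH$) with exactly the claimed $\delta_1^{-1/2}$ loss and no logarithm — this is precisely where the higher-dimensional decay rate $t^{-2}$ in \eqref{Dsp-est} must be combined with the inhomogeneous Strichartz estimate \eqref{inhom-stri} and Lemma \ref{local-2}, rather than with the plain dispersive estimate, to avoid the logarithmic failure flagged in Remark (part 2).
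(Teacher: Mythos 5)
Your decomposition of $\vec w(\delta_1)$ into the three pieces matches the paper, and your plan for the free data term and the outer Duhamel term is essentially the right one. However, you have misdiagnosed the difficulty, and this leaves the proof harder than it needs to be and also leaves the key quantitative step unspecified. The obstacle you flag---that $(1-\psi)$ does not commute with $|\nabla|^{1/2}$ or $|\nabla|$, so that a commutator/$Q_j$ argument as in Lemma \ref{v-property} is needed---does not in fact arise here, because the paper estimates both Duhamel terms at the energy level using the endpoint dual Strichartz norm $L^1_\tau L^2_x$, which involves no derivative of the forcing at all: one simply has
\[
\Bigl\| \int_0^{\delta_1/10} \vec S(\delta_1-\tau)\bigl(0,(1-\psi)F(u(\tau))\bigr)\,d\tau \Bigr\|_{\dot H^1\times L^2}
\lesssim \bigl\|(1-\psi)F(u)\bigr\|_{L^1_\tau L^2_x},
\]
and the right side is then handled by H\"older, $\|(1-\psi)u\|_{L^\infty_{t,x}}\lesssim \delta_1^{-2}\|u\|_{L^\infty_t\dot B^{1/2}_{2,1}}$ via \eqref{ra-sblv-infty-2}, and $\|u\|_{L^3_{t,x}}$, with no chain rule, no Littlewood--Paley summation, and no commutators. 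Your appeal to Lemma \ref{C1-chain} and \eqref{Pk Fu} is therefore unnecessary for this lemma. For the outer piece $\int_{\delta_1/10}^{\delta_1}$, the paper again uses only $\|F(u)\|_{L^1_\tau L^2_x}\lesssim\|u\|^2_{L^2_\tau L^4_x([\delta_1/10,\delta_1]\times\R^5)}$, and the crucial quantitative input---which your sketch calls ``the $\dot H^{1/2}$-critical estimate converted to the energy level'' but does not specify---is the interpolation
\[
\|u\|^2_{L^2_tL^4_x([\delta_1/10,\delta_1])} \lesssim \delta_1^{-\frac12}\Bigl(\sup_{t} t^{\frac{7}{10}}\|u(t)\|_{L^{50/13}_x}\Bigr)\,\|u\|_{L^{5/4}_tL^{25/6}_x([\delta_1/10,\delta_1])}\lesssim\delta_1^{-\frac12},
\]
using both estimates in \eqref{sup-2r-5/r,r} with $r=50/13$. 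This is precisely where Lemma \ref{local-2} (and hence the inhomogeneous Strichartz input) is cashed in without a logarithmic loss; you should make this pairing explicit rather than refer vaguely to ``converting'' critical control to energy control.
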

\begin{proof}

By the definition of $w$, for $t\geq \delta_1$, we have
\beq\label{w}
\begin{split}
  w(t)=& S(t)\big((1-\psi)u_0,(1-\psi)u_1\big) - \int^{\frac{\delta_1}{10}}_0 S(t-\tau)(0,(1-\psi)F(u(\tau))) d\tau\\
      & -\int_\frac{\delta_1}{10}^t S(t-\tau )(0,F(u(\tau))) d\tau.
\end{split}
\eeq

First, we consider the contribution of  the third term of \eqref{w}. Taking $r=\frac{50}{13}$ in \eqref{sup-2r-5/r,r}, by interpolation, we have
\beq\label{L2L4}
\begin{split}
 \norm{u}^2_{L^2_tL^4_x([\frac{\delta_1}{10},\delta_1]\times \R^5) }\, &= \,\int_{\delta_1/10}^{\delta_1} \norm{u}_{L^{\frac{50}{13} }_x (\R^5)} \norm{u}_{L^{\frac{25}{6} }_x (\R^5)} dt \\
& \les\, \,\delta_1^{-\frac12} \sup_{t\in [\frac{\delta_1}{10},\delta_1]}\big[ t^{\frac{7}{10} }\norm{u(t)}_{L^{\frac{50}{13} }_x(\R^5) } \big] \norm{u}_{L^\frac54_tL^\frac{25}6_x([\frac{\delta_1}{10},\delta_1]\times \R^5) }\,\les \,  \delta_1^{-\frac12}.
\end{split}
\eeq
From this inequality and Strichartz, we have


\beq\begin{split}
&\Big\| \int^{\delta_1 }_{\delta_1/10} S(\delta_1-\tau) (0,F(u(\tau))) d\tau\Big\|_{\dot H^1_x(\R^5)} + \Big\| \pa_t \bigg[\int^{t }_{\delta_1/10} S (t-\tau) (0,F(u(\tau))) d\tau \bigg]_{t=\delta_1}\Big\|_{L^2_x(\R^5)}  \\
\les \,\,  &  \norm{ F(u) }_{L^1_\tau L^2_x([\frac{\delta_1}{10},\delta_1]\times\R^5)}
 \les \norm{u}^2_{L^{2}_\tau L^4_x([\frac{\delta_1}{10},\delta_1]\times\R^5)} \les \delta_1^{-\frac12}.
    \end{split}
\eeq
By Strichartz, radial Sobolev inequality \eqref{ra-sblv-infty-2} and H\"older, the second term of \eqref{w} can be estimated as
\beq
\begin{split}
       &\,\Big\|\int_0^{\delta_1/10} \vec{S}(t-\tau)\big(0,(1-\psi)F(u(\tau)) \big) d\tau\Big|_{t=\delta_1}\Big\|_{\HL(\R^5)}\\
\les\, &\, \norm{(1-\psi)F(u(\tau))}_{L^1_\tau L^2_x([0,\frac{\delta_1}{10}]\times\R^5)}\\
\les \, & \, \delta_1^{\frac12} \norm{(1-\psi) u}_{L^{\infty}_{t,x}([0,\frac{\delta_1}{10}]\times\R^5) }^{\frac12} \norm{u}^{\frac32}_{L^3_{t,x}([0,\frac{\delta_1}{10}]\times\R^5)}\\
 \les\, &\, \delta_1^{-\frac12} \norm{u}_{L^\infty_t \dot{B}^{1/2}_{2,1}}^{\frac12} \les \delta_1^{-1/2}.
\end{split}
\eeq
Hence, it remains  to estimate 
\beq
\norm{ (1-\psi)(u_0,u_1) }_{\HL(\R^5)}.
\eeq
%
For $u_0$, by radial Sobolev inequality \eqref{ra-sblv-infty-2} and polar coordinates,
\beq\label{nabla chi u_0}
  \norm{(\nabla \psi) u_0}^2_{L^2(\R^5)} \les\,  \delta_1^{-1} \int_{\delta_1/100}^{2\delta_1}\int_{\mathbb{S}^4} u^2_0(r) d\sigma(\omega)r^4 dr
    \,\les\,  \norm{u_0}_{\dot B^{\frac12}_{2,1}(\R^5)}^2 \,\les\,1 . %
\eeq
By the inequality \eqref{u_01-L^2}, we have
\beq\label{u_0}
\norm{(1-\psi)\pa_r u_0}_{L^2_x(\R^5)}\, \les\, \delta_1^{-\frac12}.
\eeq
For $u_1$, by the inequality \eqref{u_1--1} and polar coordinates, one can deduce that
\beq\label{u_1-2}
\norm{(1-\psi)u_1}_{L^2_x(\R^5)}^2 \,\les \,  \int^\infty_{\frac{\delta_1}{10}}  \int_{\mathbb{S}^4} |u_1(r)|^2 r^4 d\sigma(\omega) dr\, \les \, \int^\infty_{\frac{\delta_1}{10}} r^{-2} dr    \, \les \,  \delta_1^{-1}.
\eeq
This inequality together with \eqref{nabla chi u_0} and \eqref{u_0} implies that
\beq
\left\|(1-\psi)(u_0,u_1) \right\|_{\HL(\R^5)} \les \delta_1^{-\frac12}.
\eeq
This completes the proof.

\end{proof}%
%
%
%

\subsection{Global well-posedness}\label{global-wellposed}

In this subsection, we prove that the solution $u$  is globally well-posed.
We emphasize that  the constants in `` $\les$ '' in this subsection depend only upon  $\delta_1$ and $\norm{\uui}_{\BB(\R^5)} $.

\begin{theorem}\label{global-thm}
Let $u$ be the solution to \eqref{NLW} with initial data $\uui\in\BB$. Then $u$ is globally well-posed and such that for any compact interval $J\subset \R$,
\vskip -0.3cm
\beq\label{u-J}
\norm{u}_{L^3_{t,x}(J\times\R^5)}\,<\,C(J,\norm{\uui}_{\BB},\delta_1).
\eeq

\end{theorem}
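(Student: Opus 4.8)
The plan is to combine the decomposition $u=v+w$ established above with an a~priori energy bound for $w$ and the blow-up criterion of Lemma~\ref{blow-criterion}. After the rescalings and the spatial translation already performed (which change the $L^3_{t,x}$-norm only by constants depending on the parameters), we may assume the initial data satisfy \eqref{low-frq} and \eqref{ui-infy-small}. By Theorem~\ref{lwp} there is a unique solution on a maximal interval $I=(-T_-,T_+)$, and by Lemma~\ref{blow-criterion} together with the time-reversal symmetry of \eqref{NLW} it suffices to prove $\norm{u}_{L^3_{t,x}([0,T]\times\R^5)}<\infty$ for every $T<T_+$. On $[0,\delta_1]$ (recall $\delta_1=\delta/2R$) this is already given by Lemma~\ref{local-1}, so we only have to bound $u$ on $[\delta_1,T]$.

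On $[\delta_1,T_+)$ we write $u=v+w$ as in \eqref{v(t)}. By Lemma~\ref{v-property}, $v$ is globally defined, $v$ solves the \emph{free} wave equation for $t\ge\delta_1$ (the Duhamel term in \eqref{v(t)} is frozen at time $\delta_1/10$), and
$$\norm{v}_{L^3_{t,x}(\R\times\R^5)}+\norm{v}_{L^\infty_t(\R;\dot B^{1/2}_{2,1}(\R^5))}\les\delta_1^{-1/2},\qquad \norm{v(t)}_{L^\infty_x(\R^5)}\les\delta_1^{-1/2}t^{-2};$$
interpolating the last two bounds with $\dot B^{1/2}_{2,1}(\R^5)\hookrightarrow L^{5/2}(\R^5)$ yields $\norm{v(t)}_{L^4_x}\les\delta_1^{-1/2}t^{-3/4}$ and $\norm{v(t)}_{L^5_x}\les\delta_1^{-1/2}t^{-1}$. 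By Lemma~\ref{w-energy}, $\overrightarrow{w}(\delta_1)\in\HL(\R^5)$ with norm $\les\delta_1^{-1/2}$; moreover $\overrightarrow{w}(\delta_1)=(u(\delta_1),\pa_t u(\delta_1))-(v(\delta_1),\pa_t v(\delta_1))\in\HH(\R^5)$ by \eqref{u-BB} and Lemma~\ref{v-property}. Since $u$ solves \eqref{NLW} and $v$ is free for $t\ge\delta_1$, $w$ solves $\pa_{tt}w-\Delta w+F(w)=R$ on $[\delta_1,T_+)$, where $R:=F(w)-F(u)$ satisfies $|R|\les|v|^2+|v|\,|w|$ because $F(z)=|z|z$ is $C^1$.

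The main step is an a~priori bound, on $[\delta_1,T]$, for the (otherwise conserved) energy $\mathcal E(w(t)):=\frac12\norm{\nabla w(t)}_{L^2}^2+\frac12\norm{\pa_t w(t)}_{L^2}^2+\frac13\norm{w(t)}_{L^3}^3$. Multiplying the equation for $w$ by $\pa_t w$ and integrating gives $\frac{d}{dt}\mathcal E(w)=\int_{\R^5}\pa_t w\,R\,dx$, so by Hölder and $\dot H^1(\R^5)\hookrightarrow L^{10/3}(\R^5)$,
$$\Big|\frac{d}{dt}\mathcal E(w(t))\Big|\les\norm{\pa_t w(t)}_{L^2}\big(\norm{v(t)}_{L^4}^2+\norm{v(t)}_{L^5}\norm{w(t)}_{L^{10/3}}\big)\les\delta_1^{-1}t^{-3/2}\mathcal E(w(t))^{1/2}+\delta_1^{-1/2}t^{-1}\mathcal E(w(t)).$$
Since $t^{-3/2}$ is integrable on $[\delta_1,\infty)$, a Gronwall argument bounds $\mathcal E(w(t))$ on $[\delta_1,T]$ by a constant depending only on $T$, $\delta_1$ and $\norm{\uui}_{\BB}$ (with at most polynomial-in-$t$ growth from the $t^{-1}\mathcal E(w)$ term); in particular $\sup_{t\in[\delta_1,T]}\norm{\overrightarrow{w}(t)}_{\HL(\R^5)}<\infty$.

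Finally, because the cubic nonlinearity is energy-subcritical in $\R^5$ ($s_p=\frac12<1$), this a~priori bound rules out blow-up. The energy-subcritical local theory for \eqref{NLW} with the smooth decaying forcing $v$ (Ginibre--Velo \cite{Ginibre1985the,ginibre1989the}, Lindblad--Sogge \cite{lindblad1995on}, together with Theorem~\ref{perturbation}) has a local existence time depending only on the $\HL$-norm, so $[\delta_1,T]$ is covered by finitely many such steps; propagating in addition $\overrightarrow{w}(\delta_1)\in\HH(\R^5)$ — whose high-frequency part is dominated by $\norm{\overrightarrow{w}}_{\HL}$ and whose low-frequency part is controlled by a Duhamel estimate using $\norm{P_{\le 0}F(u)}_{L^{5/3}_x}\les\norm{v}_{L^{10/3}_x}^2+\norm{w}_{\dot H^1_x}^2$ and the energy bound — gives $\sup_{t\in[\delta_1,T]}\norm{(w(t),\pa_t w(t))}_{\HH(\R^5)}<\infty$, and the critical local theory (Theorem~\ref{lwp}) then produces a finite $L^3_{t,x}$-bound on each of those finitely many subintervals. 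Hence $\norm{u}_{L^3_{t,x}([0,T]\times\R^5)}<\infty$ for every $T<T_+$; Lemma~\ref{blow-criterion} forces $T_+=\infty$, and by time reversal $T_-=\infty$, which is \eqref{u-J}. The hard part is the energy estimate: one must extract enough time decay from $v$ — precisely the $t^{-2}$ dispersive rate of Lemma~\ref{v-property}, upgraded to $\norm{v(t)}_{L^4_x}\les\delta_1^{-1/2}t^{-3/4}$ by interpolation — for the forcing $R$ to be integrable in time against $\mathcal E(w)^{1/2}$, and then to reconcile the resulting energy bound with the continuation theory for the perturbed, energy-subcritical equation.
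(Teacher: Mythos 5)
Your proposal follows the same main strategy as the paper: split $u=v+w$ with $v$ given by \eqref{v(t)}, establish a Gronwall inequality for the nonlinear energy $E(w)=\tfrac12\|\nabla w\|_{L^2}^2+\tfrac12\|w_t\|_{L^2}^2+\tfrac13\|w\|_{L^3}^3$ using the $t^{-2}$ decay of $v$ from Lemma~\ref{v-property}, and invoke Lemma~\ref{blow-criterion}. Your differential inequality goes through $\|v(t)\|_{L^4}\les\delta_1^{-1/2}t^{-3/4}$, $\|v(t)\|_{L^5}\les\delta_1^{-1/2}t^{-1}$, and $\dot H^1\hookrightarrow L^{10/3}$, whereas the paper bounds $\|v\|_{L^6}\les t^{-1}\|v\|_{L^3}^{1/2}$ and applies Young's inequality; both yield the same polynomial-in-$t$ bound $E(w(t))\les(1+t)^{C}$, so this part is sound.

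The issue is your final step. You deduce the $L^3_{t,x}$ bound by covering $[\delta_1,T]$ with finitely many energy-subcritical local-existence intervals, propagating a bound on $\|(w,\pa_t w)\|_{\HH}$, and then appealing to Theorem~\ref{lwp} on each subinterval. But Theorem~\ref{lwp} does not produce a local existence time or an $L^3_{t,x}$ bound from a bound on the $\HH$-norm alone: its hypothesis is smallness of $\|S(t)\uui\|_{L^3_{t,x}}$ on the interval in question, which at critical regularity depends on the profile of the data, not merely on its norm, and your argument does not verify this smallness. The far shorter route — which is exactly what the paper uses, and which you have already set up — is to observe that $\tfrac13\|w(t)\|_{L^3_x}^3$ is one of the three terms in $E(w(t))$, so the Gronwall bound immediately gives $\|w\|_{L^3_{t,x}(J\times\R^5)}^3=\int_J\|w(t)\|_{L^3_x}^3\,dt\les\int_J(1+t)^{C}\,dt<\infty$ for any compact $J\subset[\delta_1,\infty)$. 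Combined with $\|v\|_{L^3_{t,x}}\les\delta_1^{-1/2}$ from \eqref{v-stri} and Lemma~\ref{local-1} on $[0,\delta_1]$, this gives \eqref{u-J}, and Lemma~\ref{blow-criterion} then forces $T_+=\infty$. This closes the proof without any continuation argument or propagation of critical norms.
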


\begin{proof}

By Lemma \ref{blow-criterion}, it suffices  to show \eqref{u-J}. 
And by the time reversibility of the wave equation, we just need consider the part of  $t\geq 0$.

For $t\geq\delta_1$, by $u(t)=w(t)+v(t)$ and the formula \eqref{v(t)} of $v(t)$, we have
\beq\label{w-equation}
w_{tt}-\Delta w=-|u|u.
\eeq
We define the energy of $w$ as \eqref{Energy-p} by
\beq
E(w(t))=\frac12\int_{\R^5}|\nabla_{t} w(t) |^2 dx +\frac12\int_{\R^5} |\nabla_{x} w(t) |^2 dx +\frac13\int_{\R^5} |w(t)|^3dx.
\eeq
By the estimates \eqref{u_H<B}, \eqref{v-stri},  Lemma \ref{w-energy} and  interpolation, we have 
\beq
E(w(\delta_1)) \les 1.
\eeq
Now, we consider
\beq\label{E(w)'}
\begin{split}
&|\frac{d}{dt} E(w(t))|\\
= \,&|\int_{\R^5} \big( |w|w-|u|u  \big) w_t dx| \\
                      \les\,& \norm{v}_{L^6_x(\R^5)} \norm{w_t}_{L^2_x(\R^5)} \norm{w}_{L^3_x(\R^5)} + \norm{w_t}_{L^2_x(\R^5)} \norm{v}_{L^6_x(\R^5)} \norm{v}_{L^3_x(\R^5)}.
\end{split}
\eeq
By interpolation and the dispersive estimate \eqref{v-dispersive} of $v$, we have 
\begin{align}
\norm{v}_{L^6_x(\R^5)} \norm{w_t}_{L^2_x(\R^5)} \norm{w}_{L^3(\R^5)}
&\les\,  \frac{1}{t} \norm{v}_{L^3_x(\R^5)}^{\frac12}  \norm{w_t}_{L^2_x(\R^5)} \norm{w}_{L^3_x(\R^5)}\nonumber \\
&\les \, \frac{1}{t} E(w(t))^{\frac56}\norm{v}_{L^3_x(\R^5)}^{\frac12} \les\, \frac{1}{t}[E(w)(t)+ \norm{v}^3_{L^3_x(\R^5)}] \label{wtvw},\\
\norm{w_t}_{L^2_x(\R^5)} \norm{v}_{L^6_x(\R^5)} \norm{v}_{L^3_x(\R^5)}  &\les\,   \frac{1}{t}  E(w(t))^{\frac12} \norm{v}_{L^3_x(\R^5)}^{\frac32}\les \frac{1}{t}[E(w)(t)+ \norm{v}^3_{L^3_x(\R^5)}] \label{wtvv}.
\end{align}
Substituting   \eqref{wtvw} and \eqref{wtvv} into \eqref{E(w)'}, we obtain
\beq
\frac{d}{dt} E(w(t)) \leq C\frac{1}{t} ( E(w(t)) + \norm{v}_{L^3_x(\R^5)}^3).
\eeq
Hence,
\beq
\frac{d}{dt}[t^{-C} (E(w(t)))] \leq t^{-C-1} \norm{v}_{L^3_x(\R^5)}^3.
\eeq
This estimate and the inequality \eqref{v-stri} imply that
\beq\label{w-energy-2}
E(w(t)) \leq C_1(1+|t|)^{C_2}.
\eeq
Thus, for any compact interval $J\subset \R$, we have
\beq
\norm{u}_{L^3_{t,x}(J\times\R^5)} \leq \norm{v}_{L^3_{t,x}(J\times\R^5)}+\norm{w}_{L^3_{t,x}(J\times\R^5)}<\infty.
\eeq

\end{proof}

\section{Scattering}\label{scattering}

In this section we prove  the Theorem \ref{main-theorem} by assuming Proposition  \ref{u-3-bounded}, that is, removing the dependence of $\delta_1$ in \eqref{u-scatter}.
%
From the arguments in
Section \ref{local-theory}, we have $\delta_1=\frac{\delta}{2R}$,
where $\delta$ and $R$ depending the scaling and spatial profile
of the initial data, respectively.
We give the heuristic idea
of the proof by analysing the effect of the parameters  $\delta$ and
 $R$ on the critical norm $L^3_{t,x}(\R\times\R^5)$.

 Note that the
critical norm $L^3_{t,x}(\R\times\R^5)$ of the solution to the
nonlinear equation \eqref{NLW} is invariant under the scaling
transform. Hence the parameter $\delta$
may not be the main difficulty to prove
Theorem \ref{main-theorem}.
On the other hand, the latter parameter  $R$ relies on the spatial profile of the initial data.
For example, let $R$ be  the parameter corresponding to the initial data $\uui$ with compact support.
The linear evolution $\vec S(t)$  for $t\in\R$ does not change the critical norm
$\HH$  by the Plancherel theorem, but owing to the Huygens principle for the odd-dimension
linear wave equations, it does change the  spatial support of the initial datum.
Thus, for  the initial data $\vec S(t_0)(u_0,u_1)$, the spatial
parameter(may be chosen as $R+t_0$) is likely very huge, when $t_0$ is large enough.
However, the $\BB$ norm may become huge under the evolution of $\vec S(t)$.
Indeed, if $\norm{\uui}_{\BB(\R^5)} =1$, then we have $\big\|\vec S(t_0)\uui\big\|_{\BB(\R^5)} \ra \infty $ as $t_0\ra\infty.$  \footnote{By interpolation and density, it suffices to show that $\lim\limits_{t\ra\infty}\big\|e^{it|\nabla|} f\big\|_{\dot B^{-2}_{\infty,\infty}(\R^5)}= 0,$ for $f \in \mathcal{S}(\R^5),$  which is a direct consequence of the dispersive estimate
\eqref{Dsp-est} and Bernstein's estimates.} Hence,
if $\big\|\vec S(t_0)\uui\big\|_{\BB(\R^5)} \les 1$,
then $t_0$ remains bounded.  Taking account of this fact, one may conquer the difficulties caused by the parameter $R$.


\vskip0.2cm
To finish the proof of Theorem \ref{main-theorem}, we need the following theorem of profile decomposition.
%
%
%
\subsection{Profile decomposition}

Now, we recall the linear profile decomposition from \cite{ramos2012a} in the radial case.
We refer to \cite{BG-1999-AJM} for 
the profile decompositions in the  energy critical spaces. 

\begin{theorem}[Profile decomposition]\label{profile}
Let $C>0$ be a fixed number and
let $\uuin_n$ be a sequence in $\HH(\R^5)$, with
\beq
\norm{\uuin}_{\HH(\R^5)}\leq C.
\eeq
Then there exist a subsequence of $\uuin$ (which we still denote  by  $\uuin$), a sequence
$\ppj _{j\in \mathbb{N}}\subset\HH(\R^5)$, a sequence $ (R^N_{0,n},R^N_{1,n})_{N\geq1} \subset\HH(\R^5)$
and a sequence of  parameters
$(t_j^n,\lambda^n_j)\subset\R\times(0,\infty)$ such that for each $N\geq1$,
\beq\label{li-profile-1}
S(t)\uuin= \sum_{j=1}^N (\lambda^n_j)^2 \big[S(\lambda^n_j (t-t^n_j)) (\phi_0^j,\phi^j_1)\big](\lambda^n_j x) +S(t)(R_{0,n}^N,R_{1,n}^N)
\eeq
with
\beq\label{li-profile-error}
\lim_{N\ra\infty} \limsup_{n\ra\infty} \norm{S(t)(R_{0,n}^N,R_{1,n}^N) }_{L^3_{t,x}(\R\times\R^5)}=0.
\eeq
In addition,   the parameters $(t_j^n,\lambda^n_j)$ satisfy  the  orthogonality property:
for any $j\neq k$,
\beq\label{orthogonal-parameter}
\lim_{n\ra\infty} \left(\frac{\lambda_j^n}{\lambda_k^n} + \frac{\lambda_k^n}{\lambda_j^n} + (\lambda_k^n )^\frac12 (\lambda_j^n )^\frac12|t^n_j-t^n_k|\right)\,=\,\infty.
\eeq
Furthermore, for every $N\geq 1$,
\beq\label{li-profile-sobolev}
\norm{\uuin}^2_{\HH(\R^5)} =\sum_{j=1}^N \norm{\ppj}^2_{\HH(\R^5)}+\norm{(R^N_{0,n},R^N_{1,n})}^2_{\HH(\R^5)}+o_n(1).
\eeq
\end{theorem}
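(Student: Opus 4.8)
## Proof Proposal for Theorem (Profile Decomposition)

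The plan is to follow the now-standard extraction scheme (à la Bahouri--Gérard \cite{BG-1999-AJM}, adapted to the radial $\dot H^{1/2}\times\dot H^{-1/2}$ setting by Ramos \cite{ramos2012a}), using the $L^3_{t,x}$ Strichartz norm as the ``defect of compactness'' functional and $\dot B^{-2}_{\infty,\infty}$ (equivalently, a suitable weak norm controlled by dispersive decay) as the auxiliary norm in which smallness of the remainder will be detected. First I would record the key linear ingredient: if a sequence $\vec S(t)(f_n,g_n)$ is bounded in $\HH(\R^5)$, then up to a subsequence and up to a single profile with a single scale--time parameter $(\lambda_n,t_n)$ extracted so as to (nearly) saturate $\limsup_n \norm{S(t)(f_n,g_n)}_{L^3_{t,x}}$ against the weak limit, one gains a quantitative decoupling in the $\HH$ norm. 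Concretely, defining the rescaled/time-translated sequence $(\lambda^n)^{-2}\big[\vec S(-\lambda^n t^n)(f_n,g_n)\big](\cdot/\lambda^n)$, one passes to a weak limit $\ppj$ in $\HH(\R^5)$; the choice of $(\lambda^n,t^n)$ is dictated by a concentration-function / profile-extraction lemma which guarantees $\norm{\ppj}_{\HH}\gtrsim \delta_N$ where $\delta_N$ is (a fixed fraction of) the limiting $L^3_{t,x}$ size of the $N$-th remainder.

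The main steps, in order, are: \textbf{(i)} Set $(R^0_{0,n},R^0_{1,n})=\uuin$ and iterate: given $(R^{N-1}_{0,n},R^{N-1}_{1,n})$, let $A_{N}=\limsup_n\norm{S(t)(R^{N-1}_{0,n},R^{N-1}_{1,n})}_{L^3_{t,x}(\R\times\R^5)}$; if $A_N=0$ stop, otherwise extract $(\phi^N_0,\phi^N_1)$ and parameters $(t^n_N,\lambda^n_N)$ via the extraction lemma, and set $(R^N_{0,n},R^N_{1,n})$ to be the $\HH$-difference. \textbf{(ii)} Prove the Pythagorean expansion \eqref{li-profile-sobolev}: this follows because, by construction, the rescaled remainder converges weakly to $0$ against the $N$-th profile, and the cross terms between distinct profiles vanish by the orthogonality \eqref{orthogonal-parameter}; one uses the scale/translation invariance of the $\HH$ norm under $(f,g)\mapsto \lambda^2 f(\lambda\cdot),\ \lambda^3 g(\lambda\cdot)$ and under $\vec S(t)$ (Plancherel), plus a standard almost-orthogonality computation in Fourier variables. \textbf{(iii)} Deduce from \eqref{li-profile-sobolev} and $\norm{\uuin}_{\HH}\le C$ that $\sum_j\norm{\ppj}^2_{\HH}\le C^2<\infty$, hence $\norm{\ppj}_{\HH}\to 0$ as $j\to\infty$; combined with the quantitative lower bound $\norm{\phi^N_0,\phi^N_1}_{\HH}\gtrsim A_N\cdot(\text{something})$ from step (i), this forces $A_N\to 0$, which is exactly \eqref{li-profile-error}. \textbf{(iv)} Verify the asymptotic orthogonality \eqref{orthogonal-parameter} of the parameters: if it failed for some $j\ne k$ then, along a subsequence, $\lambda^n_j/\lambda^n_k\to\lambda_\infty\in(0,\infty)$ and $(\lambda^n_j\lambda^n_k)^{1/2}|t^n_j-t^n_k|$ bounded, so the $k$-th profile would contribute a nonzero weak limit to the sequence defining $\phi^j$, contradicting that it was extracted as a weak limit after removing $\phi^k$ (i.e.\ $\phi^j$ would have been already subtracted).

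The technical heart — and the step I expect to be the main obstacle — is the \textbf{profile extraction lemma} itself in this exact functional setting: namely, showing that a bounded sequence in $\HH(\R^5)$ whose free evolution has $\limsup_n\norm{S(t)\cdot}_{L^3_{t,x}}=A>0$ must, after the correct rescaling and time-translation, have a weak $\HH$-limit of size $\gtrsim A^\theta$ for some fixed $\theta>0$. The standard route is an ``improved Strichartz / refined Sobolev'' inequality of the form $\norm{S(t)(f,g)}_{L^3_{t,x}(\R\times\R^5)}\lesssim \norm{(f,g)}_{\HH}^{1-\theta}\, \norm{(f,g)}_{X}^{\theta}$ for an auxiliary translation- and scaling-invariant weak-type norm $X$ (e.g.\ a Besov space of negative regularity, $\dot B^{-2}_{\infty,\infty}\times\dot B^{-3}_{\infty,\infty}$ or the associated Besov-Strichartz norm, controlled via the dispersive estimate \eqref{Dsp-est} and Bernstein), so that a lower bound on the left forces a lower bound on $\norm{\cdot}_X$, which in turn localizes a frequency scale and a time at which a fixed amount of the weak-limit mass must sit. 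In the radial case this is carried out in \cite{ramos2012a}; I would cite that construction, adapting only the numerology $d=5$, $s=1/2$, exponent $3$, and using the radial dispersive decay $t^{-2}$ recorded in \eqref{Dsp-est} together with the Bernstein estimates of Lemma~1 to get the needed embedding. Everything else — the iteration bookkeeping, the Pythagorean identity, and the orthogonality verification — is then routine though somewhat lengthy.
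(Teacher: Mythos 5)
Your proposal is correct and consistent with the paper, which states this theorem without proof as a recall of the radial case of Ramos's linear profile decomposition \cite{ramos2012a} (with \cite{BG-1999-AJM} cited for the energy-critical analogue). Your sketch is the standard Bahouri--G\'erard iteration with the refined Strichartz/extraction lemma deferred to Ramos, which is exactly the route the paper implicitly relies on.
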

%

\subsection{End the proof of the main theorem}
Now, we apply the strategy in \cite{dodson-2016} to finish the proof of Theorem \ref{main-theorem}, that is,
remove the $\delta_1$  in Proposition \ref{u-3-bounded}.

We prove the Theorem \ref{main-theorem} by contradiction.
We assume $u$ is the solution to \eqref{NLW} with the initial data such that $\uui\in \BB(\R^5).$
For $M\geq0$, let
\beq
f(M)=\sup\{ \norm{u}_{L^3_{t,x} (\R\times \R^5)}:  \norm{\uui}_{\BB(\R^5)}\leq M  \}.
\eeq
Then by Proposition \ref{u-3-bounded}, $f$ is well defined.  In addition,  by Bernstein and Theorem \ref{lwp}, $f(M)<\infty$ when $M$
is small enough. From  the definition,  $f(M)$ is non-decreasing as $M$ increases.

If Theorem \ref{main-theorem} fails, then there exist $M_0<\infty$ and a sequence $\{\uuin\}_{n\in \mathbb{N} }\subset\BB(\R^5)$, such that
\beq\label{uB<C}
\norm{\uuin}_{\BB(\R^5)} \leq M_0,
\eeq
and the solution $u^n$ to \eqref{NLW} with  $(u^n(0),(\pa_tu^n)(0))=\uuin$ satisfying
\beq\label{u^n--infty}
\norm{u^n}_{L^3_{t,x}(\R\times\R^5)} \ra \infty,
\eeq
as $n\ra\infty.$
By Theorem \ref{profile},  we have
\beq\label{linear-u-phi}
S(t)\uuin= \sum_{j=1}^N (\lambda^n_j)^2 \big[S(\lambda^n_j (t-t^n_j)) (\phi_0^j,\phi^j_1)\big](\lambda^n_j x) +S(t)(R_{0,n}^N,R_{1,n}^N).
\eeq

In the proof of Theorem \ref{profile}, the author in \cite{ramos2012a} actually proved that
\beq\label{weak-lim}
\vec{S}(t+ t^n_j\lambda^n_j)\left[ (\lambda^n_j)^{-2} u^n_0\Big(\frac{\cdot}{\lambda^n_j}\Big),(\lambda^n_j)^{-3}u^n_1\Big(\frac{\cdot}{\lambda^n_j}\Big) \right] \Bigg{|}_{t=0} \rightharpoonup (\phi_0^j,\phi_1^j),
 \eeq
weakly in $\HH(\R^5)$ as $n\ra \infty.$ From this we can prove the following proposition.
\begin{proposition}\label{tnjlambda}
For fixed $j\in\mathbb{N}$, if $(\phi_0^j, \phi_1^j)\neq 0,$ then $|t^n_j\lambda^n_j|$ is bounded as $n\ra\infty$. 
\end{proposition}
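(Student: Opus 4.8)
The plan is to argue by contradiction: suppose that along a subsequence $|t^n_j \lambda^n_j| \to \infty$, say $t^n_j \lambda^n_j \to +\infty$ (the case $-\infty$ is symmetric by time reversal). The idea is to combine the weak convergence \eqref{weak-lim} with a decay estimate for the free evolution in a Besov norm that is weaker than $\HH$, exactly in the spirit of the footnote following \eqref{w-energy-2}. Concretely, set
\beq
(g_0^n,g_1^n) \,:=\, \left[ (\lambda^n_j)^{-2} u^n_0\Big(\tfrac{\cdot}{\lambda^n_j}\Big),(\lambda^n_j)^{-3}u^n_1\Big(\tfrac{\cdot}{\lambda^n_j}\Big) \right],
\eeq
so that by the scaling invariance of the $\HH$ norm we have $\norm{(g_0^n,g_1^n)}_{\HH(\R^5)}\leq M_0$ uniformly, and \eqref{weak-lim} says $\vec S(t^n_j\lambda^n_j)(g_0^n,g_1^n)\rightharpoonup(\phi_0^j,\phi_1^j)$ weakly in $\HH(\R^5)$.

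First I would record the decay fact: for any $(f,g)\in\HH(\R^5)$ one has $\norm{\vec S(t)(f,g)}_{\dot B^{-2}_{\infty,\infty}\times\dot B^{-3}_{\infty,\infty}(\R^5)}\to 0$ as $t\to\pm\infty$. This follows, as in the footnote, by density of Schwartz data together with the dispersive estimate \eqref{Dsp-est} and Bernstein's estimates: for Schwartz $(f,g)$ the $L^\infty$ dispersive decay of order $t^{-2}$ controls the low frequencies after a Bernstein trade, while the high frequencies are handled by the $\dot B^{-2}_{\infty,\infty}$ weight; the uniform $\HH$ bound then passes the limit to general data by a standard $\eps/3$ argument. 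Since $t^n_j\lambda^n_j\to\infty$ and the $\HH$ norms of $(g_0^n,g_1^n)$ are uniformly bounded, this gives
\beq
\norm{\vec S(t^n_j\lambda^n_j)(g_0^n,g_1^n)}_{\dot B^{-2}_{\infty,\infty}\times\dot B^{-3}_{\infty,\infty}(\R^5)}\,\longrightarrow\,0\qquad(n\to\infty).
\eeq

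Next I would use that the embedding $\HH(\R^5)\hookrightarrow \dot B^{-2}_{\infty,\infty}\times\dot B^{-3}_{\infty,\infty}(\R^5)$ is continuous and that, crucially, weak convergence in $\HH$ implies convergence in the sense of distributions, hence in the (much weaker, via Littlewood--Paley testing against fixed frequency-localized bumps) norm $\dot B^{-2}_{\infty,\infty}\times\dot B^{-3}_{\infty,\infty}$ — or more carefully, weak-$*$ lower semicontinuity of that norm combined with the convergence of each frequency piece. Either way, $(\phi_0^j,\phi_1^j)$ must have vanishing $\dot B^{-2}_{\infty,\infty}\times\dot B^{-3}_{\infty,\infty}$ norm, and since that is a norm (distributions with all Littlewood--Paley pieces of vanishing size are zero), we conclude $(\phi_0^j,\phi_1^j)=0$, contradicting the hypothesis. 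Therefore $|t^n_j\lambda^n_j|$ stays bounded.

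The main obstacle is the compatibility between the two modes of convergence: \eqref{weak-lim} only gives weak convergence in $\HH$, which by itself says nothing about a norm like $\dot B^{-2}_{\infty,\infty}$ that is not a dual of a separable space in a convenient way. The clean way around this is to test against a fixed Schwartz function: for any $\varphi\in\mathcal S(\R^5)$, weak convergence gives $\langle \vec S(t^n_j\lambda^n_j)(g_0^n,g_1^n),\varphi\rangle \to \langle(\phi_0^j,\phi_1^j),\varphi\rangle$, while the displayed decay bound (after pairing $\dot B^{-2}_{\infty,\infty}$ with $\dot B^{2}_{1,1}\subset$ test functions) forces the left side to $0$; hence $(\phi_0^j,\phi_1^j)$ annihilates all Schwartz functions and is therefore $0$. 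I would carry out the argument in this dual-pairing form to sidestep any subtlety about semicontinuity of the $\dot B^{-2}_{\infty,\infty}$ norm under weak $\HH$ convergence.
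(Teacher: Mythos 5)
There is a genuine gap in the step where you pass from the qualitative decay statement for \emph{fixed} data to a conclusion about the \emph{sequence} $(g_0^n,g_1^n)$. The fact that $\norm{\vec S(t)(f,g)}_{\dot B^{-2}_{\infty,\infty}\times\dot B^{-3}_{\infty,\infty}}\to 0$ as $t\to\infty$ for each fixed $(f,g)\in\HH$ is correct, but it does not follow that $\norm{\vec S(t_n)(g_0^n,g_1^n)}_{\dot B^{-2}_{\infty,\infty}\times\dot B^{-3}_{\infty,\infty}}\to 0$ when $t_n\to\infty$ and the data varies with $n$ subject only to a uniform $\HH$ bound. The $\eps/3$ argument you invoke approximates the fixed $(f,g)$ by Schwartz data; for a varying sequence the Schwartz approximants must vary with $n$, and the constant in the dispersive bound \eqref{Dsp-est} (involving $\norm{\nabla^3 f_\eps^n}_{L^1}+\norm{\nabla^2 g_\eps^n}_{L^1}$) is not controlled by the $\HH$ norm. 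A clean counterexample to the sequence version: take any nonzero Schwartz pair $(\phi_0,\phi_1)$, set $(g_0^n,g_1^n):=\vec S(-t_n)(\phi_0,\phi_1)$ with $t_n\to\infty$; then $\norm{(g_0^n,g_1^n)}_{\HH}$ is constant, yet $\vec S(t_n)(g_0^n,g_1^n)=(\phi_0,\phi_1)$ does not tend to zero in any norm.

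What is needed, and what the paper exploits, is the \emph{scale-invariance of the $\BB$ norm}: from \eqref{uB<C} one gets $\norm{(g_0^n,g_1^n)}_{\BB}\leq M_0$ uniformly, and the quantitative interpolation estimate \eqref{2r-5/r,r} with $r=3$ then yields the rate $\norm{S(t)(g_0^n,g_1^n)}_{L^3_x}\les |t|^{-1/3}\norm{(g_0^n,g_1^n)}_{\BB}\les |t|^{-1/3}M_0$, and likewise $\norm{\dot S(t)(g_0^n,g_1^n)}_{\dot W^{-1,3}_x}\les |t|^{-1/3}M_0$. This \emph{uniform-in-$n$} decay in $L^3_x\times\dot W^{-1,3}_x$ combined with the weak $\HH$ convergence \eqref{weak-lim} forces $(\phi_0^j,\phi_1^j)=(0,0)$. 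Your plan can likely be repaired along these lines, but note that your chosen target $\dot B^{-2}_{\infty,\infty}\times\dot B^{-3}_{\infty,\infty}$ is not ideal: the $\ell^\infty$ norm over dyadic $j$ does not turn the frequency-localized dispersive/Bernstein bounds into a uniform power of $t$ when the mass concentrates at frequency $\sim t^{-1}$; the paper's $L^3_x\times\dot W^{-1,3}_x$ does, because the $\ell^1$ structure of $\BB$ lets one bound $\sum_j\min(t^{-2/3}2^{-j/3},2^{j/3})(\cdots)\les t^{-1/3}\norm{\cdot}_{\BB}$.
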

\begin{proof}
First, if $t^n_j\lambda^n_j$ is unbounded for $n\in \mathbb{N}$, then  by taking a subsequence of $n$(still denoted by $n$), we assume that $|t^n_j\lambda^n_j| \ra \infty$, as $n\ra \infty$ .
In light of the heuristic analysis at the beginning of this section,
we may have
\beq\label{}
\vec S(t^n_j\lambda^n_j)\left[ (\lambda^n_j)^{-2} u^n_0\Big(\frac{\cdot}{\lambda^n_j}\Big),(\lambda^n_j)^{-3}u^n_1\Big(\frac{\cdot}{\lambda^n_j}\Big) \right]\, \ra\, (0,0).
\eeq
in $L^3_x\times W_x^{-1,3}(\R^5)$.
%
In fact,
using \eqref{uB<C} and the estimate \eqref{2r-5/r,r} in Section \ref{local-theory}, we have
\beq\label{stnj}
\begin{split}
      &\,\Big\|S(t^n_j\lambda^n_j)\Big[(\lambda^n_j)^{-2} u^n_0(\frac{\cdot}{\lambda^n_j}),(\lambda^n_j)^{-3} u^n_1(\frac{\cdot}{\lambda^n_j})  \Big]  \Big\|_{L^3_{x}(\R^5)}\\
  \les&\,  |t^n_j\lambda^n_j|^{-\frac13} \Big\| \Big((\lambda^n_j)^{-2} u^n_0(\frac{\cdot}{\lambda^n_j}),(\lambda^n_j)^{-3} u^n_1(\frac{\cdot}{\lambda^n_j}) \Big)\Big\|_{\BB(\R^5)} \\
  \les&\, |t^n_j\lambda^n_j|^{-\frac13}\norm{\uuin}_{\BB(\R^5)}\ra 0,
\end{split}
\eeq
as $n\ra\infty$.
Similarly,  by the dispersive estimate \eqref{Dsp-est}, Bernstein and interpolation, we have
\beq\label{dstnj}
\begin{split}
       &\,\Big\| \dot S(t^n_j\lambda^n_j)\Big[(\lambda^n_j)^{-2} u^n_0(\frac{\cdot}{\lambda^n_j}),(\lambda^n_j)^{-3} u^n_1(\frac{\cdot}{\lambda^n_j})  \Big]  \Big\|_{\dot W^{-1,3}_{x}(\R^5)}\\
  \les&\, |t^n_j\lambda^n_j|^{-\frac13} \norm{\uuin}_{\BB(\R^5)} \ra 0,
\end{split}
\eeq
as $n\ra\infty$.
Hence,  from the weak convergence relation \eqref{weak-lim},   \eqref{stnj}  and \eqref{dstnj} imply that  $(\phi_0^j, \phi_1^j)= (0,0)$.

\end{proof}

For simplicity, we assume that every  $(\phi_0^j,\phi_1^j)$ in \eqref{linear-u-phi}
 is nontrivial. By Proposition \ref{tnjlambda}, for each fixed $j$,  $t^n_j\lambda^n_j$
 is bounded,  and therefore after taking   a suitable subsequence of $n$ (still denoted by $n$),
 we can assume $t^n_j\lambda^n_j\ra t_j\in \R$ as $n\ra\infty$. Hence, if we denote $(\vp_0^j, \vp^j_1)= \vec{S}(-t_j)(\phi_0^j,\phi^j_1),$ then

\beq\label{phi-varphi}
\vec S(-\lambda^n_jt^n_j)(\phi^j_0,\phi^j_1)-(\vp^j_0,\vp^j_1)\ra 0,
\eeq
in $\HH(\R^5)$ as $n\ra\infty.$
Let
\begin{equation}\label{R-TR}
\left\{  \begin{aligned}
\tilde{R}_{0,n}^N &=R_{0,n}^N +
 \sum_{j=1}^N (\lambda^n_j)^2\big[S(-\lambda^n_jt^n_j)(\phi^j_0,\phi^j_1)\big](\lambda^n_jx) - (\lambda^n_j)^2 (\vp^j_0,\vp^j_1)(\lambda^n_jx),\\
\tilde{R}_{1,n}^N&= R_{1,n}^N + \sum_{j=1}^N (\lambda^n_j)^3 \big[ \dot S(-\lambda^n_jt^n_j) (\phi^j_0,\phi^j_1)\big](\lambda^n_jx) -(\lambda^n_j)^3\big[  (\vp^j_0,\vp^j_1)\big[ (\lambda^n_jx),
\end{aligned} \right.
\end{equation}
then
\beq\label{li-profile-error-tR}
\lim_{N\ra\infty} \limsup_{n\ra\infty} \norm{S(t)(\tilde{R}_{0,n}^N, \tilde{R}_{1,n}^N) }_{L^3_{t,x}(\R\times\R^5)}=0.
\eeq
%
%
Taking $t=0$ in \eqref{linear-u-phi},  by  \eqref{phi-varphi} and \eqref{R-TR},  we have
\beq\label{pertur-initial}
\uuin=\sum_{j=1}^N  \big(  (\lambda^n_j)^2 (\vp^j_0(\lambda^n_j x) , (\lambda^n_j)^3 \vp^j_1(\lambda^n_j x) \big) +  (\tilde{R}_{0,n}^N, \tilde{R}_{1,n}^N ).
\eeq
In addition, by the orthogonality  \eqref{orthogonal-parameter}  and Proposition \ref{tnjlambda}, we have for each $j\neq k$
\beq\label{orthogonal}
\lim_{n\ra \infty} \frac{\lambda_j^n}{\lambda_k^n} + \frac{\lambda_k^n}{\lambda_j^n}\, = \,\infty,
\eeq
as $n\ra\infty$. Thus, for fixed $j\in \mathbb{N}$, we have
\beq
\Big( ( \lambda^n_j)^{-2} u^n_0(\frac{x}{\lambda^n_j }) , (\lambda^n_j)^{-3} u^n_1(\frac{x}{\lambda^n_j }) \Big)
  \rightharpoonup (\vp^j_0,\vp^j_1)
\eeq
weakly in $\HH(\R^5)$ as $n\ra \infty$.
By Fatou's  lemma,
this fact and the inequality \eqref{uB<C}
imply
\beq
\norm{(\vp^j_0,\vp^j_1)}_{\BB(\R^5)}\leq M_0.
\eeq
On the other hand,  
\eqref{li-profile-sobolev} and \eqref{phi-varphi}  yield that
\beq
\sum_{j\geq1} \norm{(\vp^j_0,\vp^j_1)}^2_{\HH(\R^5)} \les \sup_{n\geq 1} \norm{\uuin}_{\HH(\R^5)}^2\les C^2_0.
\eeq
Hence, for fixed $\ep>0$, there exists a finite integer  $N_0$ such that
\beq\label{j-large-HH}
\sum_{j\geq N_0+1} \big\|(\vp^j_0,\vp^j_1)\big\|^2_{\HH(\R^5)} \leq \,\ep.
\eeq
By the local well-posedness theory, if  $\ep>0$ is small enough, then the solution $v^j$ to \eqref{NLW} with the initial data $( \vp^j_0,\vp^j_1)$ is globally well-posed and 
\beq\label{j-large-L3}
\norm{v^j}_{L^3_{t,x}(\R\times\R^5)}\les\, \big\|(\vp^j_0,\vp^j_1)\big\|_{\HH(\R^5)}, \text{\, for every\, } j\geq N_0+1.
\eeq

For $1\leq j\leq N_0$, as a consequence of Proposition  \ref{u-3-bounded}, the solution to \eqref{NLW} with the initial data $( \vp^j_0,\vp^j_1)$ is
globally well-posed and such that
\beq\label{j-small-L3}
\norm{v^j}_{L^3_{t,x}(\R\times\R^5)}\les_{M_0,j} 1.
\eeq
By the orthogonality property  \eqref{orthogonal}, for any $j\neq k$
 \beq\label{}
 \lim_{n\ra\infty } \iint_{ \R\times \R^5}   |(\lambda^n_j)^2 v^j(\lambda^n_j t,\lambda^n_j x) |^2 | (\lambda^n_k)^2 v^k(\lambda^n_k t,\lambda^n_k x) |dx dt =0.
\eeq
This together with the estimates \eqref{j-large-HH}, \eqref{j-large-L3} and \eqref{j-small-L3} implies
\beq\label{perturb-1}
\sup\limits_{N\geq N_0+1} \lim\limits_{n\ra\infty} \Big\|\sum_{1\leq j\leq N} (\lambda^n_j)^2 v^j(\lambda^n_j t,\lambda^n_j x) \Big\|_{L^3_{t,x}(\R\times\R^5)}
\eeq
is  bounded. 
Similarly, as a consequence of  the trivial estimate
\beq\nonumber
\begin{split}
    & \left| F(\sum_{j=1}^N (\lambda^n_j)^2 v^j(\lambda^n_j t,\lambda^n_j x) )-
                      \sum_{j=1}^N F( (\lambda^n_j)^2 v^j(\lambda^n_j t,\lambda^n_j x)) \right|\\
\les\,\, & \sum_{1\leq j, k\leq N, j\neq k }  |(\lambda^n_j)^2 v^j(\lambda^n_j t,\lambda^n_j x) )|   (\lambda^n_k)^2 v^k(\lambda^n_k t,\lambda^n_k x)|
\end{split}
\eeq
and the orthogonality property \eqref{orthogonal}, we have
\beq\label{perturb-2}
\lim_{n\ra\infty}\biggl\|F(\sum_{j=1}^N (\lambda^n_j)^2 v^j(\lambda^n_j t,\lambda^n_j x) )-
                      \sum_{j=1}^N F( (\lambda^n_j)^2 v^j(\lambda^n_j t,\lambda^n_j x))\biggr\|_{L^\frac{3}{2}_{t,x}(\R\times\R^5)}=0.
\eeq

Let $u^n_N$ be an  approximate  solution to \eqref{NLW} defined by
$$ u^n_N = \,\sum_{j=1}^N (\lambda^n_j)^2 v^j (\lambda^n_j x)+ S(t)\RR.  $$
Then,
recall the property \eqref{li-profile-error-tR} for $\RR$ and
the fact that \eqref{perturb-1} is uniformly bounded for $N\geq N_0+1$,  we obtain
\beq\label{perturb-1-}
\limsup_{N\ra\infty}\lim\limits_{n\ra\infty} \norm{u^n_N}_{L^3_{t,x}(\R\times\R^5)} \les 1.
\eeq
Moreover, 
combining
\eqref{perturb-2}, the property \eqref{li-profile-error-tR} for $\RR$,  and H\"older's inequality, we have

\beq\label{perturb-2-}
\limsup\limits_{N\ra\infty}\lim\limits_{n\ra\infty}
\Big\|
F(u^n_N)-\sum_{j=1}^N F\big((\lambda^n_j)^2v^j(\lambda^n_j t, \lambda^n_j x)
     \big) \Big\|_{L^\frac32_{t,x}(\R\times\R^5)} =0.
\eeq
%
%

Utilizing  Theorem \ref{perturbation}, by \eqref{pertur-initial}, \eqref{perturb-1-} and  \eqref{perturb-2-},  we have that for $n$ large enough, the solution $u^n$ to the equation \eqref{NLW}  with initial data $\uuin$ is global and such that
\beq
\lim_{n\ra \infty} \norm{u^n}_{L^3_{t,x}(\R\times \R^5)}
\eeq 
 is bounded, which contradicts with
 the hypothesis  \eqref{u^n--infty} of $u^n$. Thus, we have proved Theorem \ref{main-theorem}.

%
\section{Hyperbolic coordinates and spacetime estimates} \label{hyper-scatter}

In this section, we will finish the proof of  Proposition \ref{u-3-bounded}. 
We first reduce Proposition  \ref{u-3-bounded} to estimating the $L^3_{t,x}$ norm of $w$ on
the region $\Omega_2$, which will be defined below. Without loss of generality, we assume that $\delta_1<\frac14$.
And as in Subsection \ref{global-thm}, we also note the constants in ``$\les$'' in the following of this section may be different in each step and  are  dependent on  $\delta_1$ and $\norm{\uui}_{\BB(\R^5)}$.

\subsection{Reduction of  the proof of Proposition \ref{u-3-bounded} }\label{reduction}
Now we consider the $L^3_{t,x}$ norm of $u $ on the $\R_+\times \R^5$.
First, we  split time-spatial region $\R_+\times \R^5$ as the union
\beq
\R_+\times \R^5=\, \Omega_1 \cup \Omega_2 \cup \Omega_3,
\eeq
where
$ \Omega_1=\,\{(t,x)\in\R_+\times \R^5:\, |x|\geq t+\frac12    \}$ and
$\Omega_2=\, \{(t,x)\in\R_+\times \R^5:\, (t+(1-\delta_1))^2-|x|^2\geq 1    \}.$
Since $\delta_1<\frac14$, there exists a large constant $C>0$, such that
$$ \Omega_3 \subset \{(t,x)\in\R_+\times \R^5:\, t+  |x| \leq C   \}.$$
Recalling the estimate \eqref{u-out-cone} in Section \ref{local-theory}, we obtain
$\norm{u}_{L^3_{t,x}(\Omega_1)}\les 1.$
For the bounded region $\Omega_3$,  Theorem \ref{global-thm} yields
$\norm{u}_{L^3_{t,x}(\Omega_3)}\les 1.$ Hence, we just need to consider the $L^3_{t,x}$ norm of $u$ on the region $\Omega_2$.
By the estimate \eqref{v-stri} for $v$, we are reduced to show  $\norm{w}_{L^3_{t,x}(\Omega_2)}\les 1.$

%

\subsection{Hyperbolic coordinates}

For the radial solution $u(t,x)$  to \eqref{NLW}, if we denote $u(t,r)=u(t,x)$ for $r=|x|$, then
\beq
\pa_{tt}(r^2u) -\pa_{rr}(r^2u)=-2u-r^2 |u|u.
\eeq

Denote $\bu(t,r) =u(t-(1-\delta_1),r)$ and $\bv,\bw$ similarly.
Let $(t,r)=(e^\tau\cosh s, e^\tau\sinh s) $, then $drdt=e^{2\tau} d\tau d s$.
We denote the hyperbolic transforms by
\beq\label{hy-trans-u}
\tilde u=\frac{e^{2\tau} \sinh^2 s }{s^2 } \bu(e^\tau\cosh s, e^\tau\sinh s),
\eeq
\beq\label{hy-trans-v}
\tilde v=\frac{e^{2\tau} \sinh^2 s }{s^2 } \bv(e^\tau\cosh s, e^\tau\sinh s),
\eeq
and
\beq\label{hy-trans-w}
\tilde w=\frac{e^{2\tau} \sinh^2 s }{s^2 } \bw(e^\tau\cosh s, e^\tau\sinh s).
\eeq
Hence, we have
\beq\label{tu-wave}
\pa_{\tau\tau}(s^2\tu)-\pa_{ss}(s^2\tu)=-\frac{2 s^2}{\sinh^2 s} \tu-\frac{s^4}{ \sinh^2s  } |\tu|\tu,
\eeq
\beq
\pa_{\tau\tau}(s^2\tv)-\pa_{ss}(s^2\tv)=-\frac{2 s^2}{\sinh^2 s} \tv,
\eeq
\beq\label{tw-wave}
\pa_{\tau\tau}(s^2\tw)-\pa_{ss}(s^2\tw)=-\frac{2 s^2}{\sinh^2 s} \tw-\frac{s^4}{ \sinh^2s  } |\tu|\tu.
\eeq
Define the hyperbolic energy of $\tw$ by
\beq
E_h (\tw)(\tau)= \int_0^\infty \big[\frac12|(s^2\tw)_\tau|^2 +\frac12 |(s^2\tw)_s|^2 +  \frac{|s^2\tw|^2 }{\sinh^2 s} + \frac13 \frac{|s^2 \tw|^3}{ \sinh^2 s} \big]ds.
\eeq

\subsection{The hyperbolic energy for some $\tau_0\geq 0$}

First, we want to prove $E_h (\tw)(\tau)$  is bounded for some $\tau_0\geq0$.
We claim that it suffices to show the boundedness of
\beq
\int_0^\infty \big[|(s^2\tw)_\tau(\tau_0,s)|^2 + |(s^2\tw)_s(\tau_0,s)|^2\big]ds
\eeq
 for some $\tau_0>0$.

 To prove this claim, we need   the sharp Hardy inequality,
\beq\label{sharp-hardy}
\left(\frac{d-2}{2} \right)^2 \int_{\R^d} \frac{|f(x)|^2}{|x|^2} dx \leq \int_{\R^d} |\nabla f|^2(x) dx .
\eeq
By polar coordinates, we rewrite this inequality for radial functions,
\beq\label{sharp-hardy-ra}
\left(\frac{d-2}{2} \right)^2 \int_0^\infty  |f(r)|^2 r^{d-3} dr \, \leq\,   \int_0^\infty   |\pa_r f(r)|^2(r)r^{d-1} dr.
\eeq
Then, this inequality and integration by parts imply that
\begin{align}
\int_0^\infty |(s^2\tw(\tau_0))_s|^2 ds  =\,\,& \int_0^\infty  s^4 \tw_s^2(\tau_0) ds + 4 \int_0^\infty  s^2\tw(\tau_0) s\tw_s(\tau_0) ds +4\int_0^\infty  s^2 \tw^2(\tau_0) ds \nonumber \\
       =\,\, & \int_0^\infty  s^4 \tw_s^2(\tau_0) ds -2 \int_0^\infty  s^2 \tw(\tau_0) ^2ds\,\, \geq \,\, \frac19 \int_0^\infty  s^4 \tw_s^2(\tau_0) ds.
     \end{align}
In addition, by  H\"older and Sobolev in polar coordinates, we have
\beq
\int_0^\infty \frac{|s^2 \tw(\tau_0)|^3}{\sinh^2 s} ds =\int_0^\infty \frac{s^2}{\sinh^2 s } |\tw(\tau_0)|^3 s^4 ds \les (\int_0^\infty |\tw(\tau_0)|^{\frac{10}3 } s^4 ds) ^\frac9 {10}\les (\int_0^\infty |\tw_s (\tau_0)|^2 s^4 ds )^\frac32.
\eeq
By Hardy's inequality,   we have  
\beq
\int_0^\infty \frac{|s^2 \tw(\tau_0)|^2}{\sinh^2 s} ds \les\, \int_0^\infty \frac{1}{s^2} |\tw(\tau_0)|^2  s^4 ds \les\, \int_0^\infty |\tw_s(\tau_0)|^2 s^4 ds . 
\eeq
Hence the claim follows.

\subsubsection{\textbf{The hyperbolic energy for $s>s_0>0$ }} 
For $\tau\in[0,1]$ and  sufficiently large $s_0>0$, we can assume that $e^{\tau-s_0} <\frac12-\delta_1$.
By the finite  speed of propagation,
$v(t,r)$ are supported in the region
$ \{(t,r)\in\R\times \R_+:    r-t \, \les \, \delta_1/5\} .$
Then,  for $\tau\in[0,1]$ and $s\geq s_0$, we have
 $$e^\tau\sinh s-[e^\tau\cosh s-(1-\delta_1)]=1-\delta_1-e^{\tau-s} >\frac12>\frac{\delta_1}{5},$$
 which 
leads to
$v(e^\tau\cosh s-(1-\delta_1),e^\tau\sinh s)=0.$
Hence, for $\tau\in[0,1]$, we have
\beq\label{u-energy-part1}
\int_{s_0}^\infty \frac12|(s^2\tw)_\tau(\tau,s)|^2 +\frac12 |(s^2\tw)_s(\tau,s)|^2 \  ds =\int_{s_0}^\infty \frac12|(s^2\tu)_\tau(\tau,s)|^2 +\frac12 |(s^2\tu)_s(\tau,s)|^2 \ ds.
\eeq
Since $u$ is a radial solution to the equation \eqref{NLW}, we have, by \eqref{SL-formula-radial},
\beq
\begin{split}
  r^2u(t,r)\,=\,\,&\frac12[(r-t)^2u_0(r-t)+(r+t)^2u_0(r+t)]\\
        & -\frac12 t r^{-1}\int^{r+t}_{r-t} s u_0(s) ds\\
        & +\frac1{4r} \int^{r+t}_{r-t} s  (s^2+r^2-t^2) u_1(s) ds\\
        & + \frac1{4r} \int_0^t \int^{r+t-s}_{r-t+s} \rho  (\rho^2+r^2-(t-s)^2)|u|u(s,\rho) d\rho ds,
\end{split}
\eeq
for $r\geq t\geq0.$
Hence, by the hyperbolic transform \eqref{hy-trans-u}, we have

\begin{align}
   &s^2\tu(\tau,s) \nonumber\\
  =\,\,&\frac12[((1-\delta_1-e^{\tau-s})^2u_0(1-\delta_1-e^{\tau-s})+(e^{\tau+s} -(1-\delta_1) )^2u_0(e^{\tau+s} -(1-\delta_1))] \label{u-0-1}\\
        & -\frac12 (e^\tau\cosh s-(1-\delta_1)) (e^\tau\sinh s)^{-1}\int^{e^{\tau+s} -(1-\delta_1)}_{1-\delta_1-e^{\tau-s}}  \rho u_0(\rho) d\rho \label{u-0-2}\\
        & +\frac14 \int^{e^{\tau+s} -(1-\delta_1)}_{1-\delta_1-e^{\tau-s}} \rho  \frac{\rho^2+(e^{\tau+s} -(1-\delta_1))( 1-\delta_1-e^{\tau-s})}{e^{\tau}\sinh s } u_1(\rho) d\rho\label{u-1}\\
        & + \frac14 \int_{1-\delta_1}^{e^\tau\cosh s} \int^{ e^{\tau+s}-t}_{t-e^{\tau-s} }  \rho  \frac{\rho^2+(e^{\tau+s} -t)(t-e^{\tau-s})}{e^{\tau}\sinh s} |\bu|\bu(t,\rho) d\rho dt \label{Duhamel}.
\end{align}

For \eqref{u-0-1}, by a direct calculation, we  obtain

\beq
(\pa_{\tau}+\pa_{s} )\eqref{u-0-1}\,=\, 2(e^{\tau+s} -(1-\delta_1) ) e^{\tau+s} u_0(e^{\tau+s} -(1-\delta_1))+ (e^{\tau+s} -(1-\delta_1) )^2 u'_0(e^{\tau+s}   -(1-\delta_1)) e^{\tau+s},
\eeq
\beq
(\pa_{\tau}-\pa_{s} )\eqref{u-0-1}\,=\, 2(1-\delta_1-e^{\tau-s})e^{\tau-s}u_0(1-\delta_1-e^{\tau-s}) + ((1-\delta_1-e^{\tau-s})^2u'_0(1-\delta_1-e^{\tau-s})e^{\tau-s}.
\eeq
Using the estimate \eqref{u_01-L^2} in Section \ref{basics} and polar coordinates, we deduce that
\beq\label{u-0-1-tau+s}
\begin{split}
  \int_{s_0} \left|(e^{\tau+s} -(1-\delta_1) ) e^{\tau+s} u_0(e^{\tau+s} -(1-\delta_1)) \right|^2ds  \les \int_0^\infty |u_0(r)|^2 r^3 dr\,\les\, 1,
\end{split}
\eeq
\beq
\begin{split}
  \int_{s_0} \left|(e^{\tau+s} -(1-\delta_1) ) ^2e^{\tau+s} u'_0(e^{\tau+s} -(1-\delta_1)) \right|^2ds  \les \int_0^\infty |\pa_r u_0(r)|^2 r^5 dr\,\les\, 1.
\end{split}
\eeq
By the radial Sobolev inequality \eqref{ra-sblv-infty-2},
we have $|u_0(r)|\les r^{-2}$. This estimate and the inequality \eqref{rad-sobolev-u_0}
imply
\beq
\begin{split}
&  \int_{s_0}^\infty  |(1-\delta_1-e^{\tau-s})e^{\tau-s}u_0(1-\delta_1-e^{\tau-s})|^2 ds  +\int_{s_0}^\infty | ((1-\delta_1-e^{\tau-s})^2u'_0(1-\delta_1-e^{\tau-s})e^{\tau-s}|^2 ds\\
\les &    \int_{s_0}^\infty e^{-2s} ds\les 1.
 \end{split}
\eeq
We now take the derivatives of \eqref{u-0-2} with respect to  $\tau $ and $s$, then
\begin{equation}
\pa_\tau \eqref{u-0-2}\,=\,  \frac{1-\delta_1}{2e^\tau\sinh s} \int^{e^{\tau+s} -(1-\delta_1)}_{1-\delta_1-e^{\tau-s}} \rho u_0(\rho) d\rho+ I_1 +I_2 \label{u-0-2-tau}
\end{equation}
and
\beq
\pa_s\eqref{u-0-2}\,= \,\pa_s\left(\frac{e^{\tau}\cosh s-(1-\delta_1) }{2e^\tau\sinh s}\right) \int^{e^{\tau+s} -(1-\delta_1)}_{1-\delta_1-e^{\tau-s}} \rho u_0(\rho) d\rho+ I_1 -I_2, \label{u-0-2-s}
\eeq
where
\beq
I_1=\frac{e^{\tau} \cosh s-(1-\delta_1)}{2e^{\tau}\sinh s} e^{\tau+s} (e^{\tau+s}-(1-\delta_1)) u_0(e^{\tau+s}-(1-\delta_1)),
\eeq
\beq
I_2=\frac{e^{\tau} \cosh s-(1-\delta_1)}{2e^{\tau}\sinh s} e^{\tau-s} (1-\delta_1-e^{\tau-s}) u_0(1-\delta_1-e^{\tau-s}).
\eeq
For the first term in the right hand of \eqref{u-0-2-tau}, by the inequality \eqref{rad-sobolev-u_0}, we have

\beq
\int_{s_0}^\infty \bigg| e^{-s}  \int^{e^{\tau+s} -(1-\delta_1)}_{1-\delta_1-e^{\tau-s}} \rho u_0(\rho) d\rho \bigg|^2 ds \les \int_{s_0}^\infty  e^{-2s} ds\les 1.
\eeq
By similar estimates, one can  find that the contribution of the first term in the right hand of \eqref{u-0-2-s} to \eqref{u-energy-part1} is finite.
For $I_1$, changing of variables and the inequality \eqref{u_01-L^2} yield
\beq
\int_{s_0}^\infty |I_1|^2 ds\les \int_{s_0}^\infty |e^{2s}u_0(e^{\tau+s}-(1-\delta_1))|^2ds\,
\les\, \int_{\frac12e^{s_0}}^\infty \rho^3 |u_0(\rho)|^2d\rho\les 1.
\eeq
For $I_2$, by  
\eqref{ra-sblv-infty-2}, 
we obtain
\beq
\int_{s_0}^\infty |I_2| ds \les \int_{s_0}^\infty  |e^{-s} |^2 ds\les 1.
\eeq

Next, we consider the contribution of \eqref{u-1} to \eqref{u-energy-part1}.
For simplicity, we consider
\begin{align}
\frac12( \pa_{\tau} -\pa_{s} ) \eqref{u-1}\, =\,\,  & e^{\tau-s}  (1-\delta_1-e^{\tau-s})^2  u_1(1-\delta_1-e^{\tau-s}) \label{u1-tau-s-1}\\
   &   +\frac{e^{\tau-s} (e^{\tau+s} -(1-\delta_1))^2}{(e^{\tau+s} -e^{\tau-s})^2  } \int^{e^{\tau+s} -(1-\delta_1)}_{1-\delta_1-e^{\tau-s}}   \rho u_1(\rho) d\rho \label{u1-tau-s-2} \\
   &   +\frac{e^{\tau-s}}{(e^{\tau+s}-e^{\tau-s})^2}\int^{e^{\tau+s} -(1-\delta_1)}_{1-\delta_1-e^{\tau-s}} \rho^3 u_1( \rho) d\rho,  \label{u1-tau-s-3}
\end{align}
and
\begin{align}
\frac12( \pa_{\tau} +\pa_{s} ) \eqref{u-1}\, =\, \, & e^{\tau+s}  (e^{\tau+s} -(1-\delta_1))^2  u_1(e^{\tau+s} -(1-\delta_1)) \label{u1-tau+s-1}\\
   &   +\frac{e^{\tau+s} (1-\delta_1-e^{\tau-s})^2}{(e^{\tau+s} -e^{\tau-s})^2  } \int^{e^{\tau+s} -(1-\delta_1)}_{1-\delta_1-e^{\tau-s}}   \rho u_1(\rho) d\rho \label{u1-tau+s-2} \\
   &   +\frac{e^{\tau+s}}{(e^{\tau+s}-e^{\tau-s})^2}\int^{e^{\tau+s} -(1-\delta_1)}_{1-\delta_1-e^{\tau-s}} \rho^3 u_1( \rho) d\rho.  \label{u1-tau+s-3}
\end{align}
Using the estimates \eqref{rad-sobolev-u_1} and \eqref{u_01-L^2},
we can easily estimate the contributions of   \eqref{u1-tau-s-1}-\eqref{u1-tau+s-2}
to \eqref{u-energy-part1}. 
Let ${\mathbb I}_{J} (y)$ be the characteristic function of an interval $J\subset \R$. 
For \eqref{u1-tau+s-3}, by the inequality \eqref{u_01-L^2}  and a change of variables, we see that
\begin{align}
 \int_{s_0}^\infty \left| e^{-s}\int^{e^{\tau+s} -(1-\delta_1)}_{1-\delta_1-e^{\tau-s}} \rho^3 u_1( \rho) d\rho \right|^2  ds
\, \les \  & \int_{s_0}^\infty \left| e^{-s} \int^{2e^s}_{0} \rho^3 |u_1( \rho)| d\rho \right|^2  ds  \nonumber \\
 \les \ &   \int_0^\infty \left|\frac{1}{\eta} \int^{2\eta}_{0} \rho^3 |u_1( \rho)| d\rho \right|^2  \frac1\eta d\eta \nonumber \\
 \les \ & \int_0^\infty \left|\frac{1}{\eta} \int^{2\eta}_{0} \rho^\frac52 |u_1( \rho)| d\rho \right|^2 d\eta\nonumber\\
  \les \ & \int_0^\infty \left|\mathcal{M}\big({\mathbb I}_{[0,\infty)}(\rho) \rho^{\frac52} u_1(\rho)\big)\right|^2(\eta) d\eta\nonumber\\
  \les \ &  \int_0^\infty |u_1(\rho)|^2 \rho^5 d\rho \les 1,
\end{align}
where $\mathcal{M}$ is the Hardy-Littlewood maximal function and we used the fact that $\mathcal{M}$  is bounded in $L^2$.

Next, we consider the contribution of \eqref{Duhamel} to the energy \eqref{u-energy-part1}. Also, for simplicity, we consider
\begin{align}
  \big(\pa_{\tau} + \pa_{s} \big)\eqref{Duhamel}\, =\, \,& e^{\tau+s}   \int^{e^\tau\cosh s}_{1-\delta_1} (e^{\tau+s}-t)^2  (|\bu|\bu)(t,e^{\tau+s}-t) dt\label{Duhamel+1}\\
  & + \frac{e^{\tau+s}}{(e^{\tau+s}-e^{\tau-s})^2}   \int^{e^\tau\cosh s}_{1-\delta_1}  \int^{e^{\tau+s}-t}_{t-e^{\tau-s}} \rho (t-e^{\tau-s})^2(|\bu|\bu) (t,\rho) d\rho dt\label{Duhamel+2} \\
  & -\frac{e^{\tau+s}}{(e^{\tau+s}-e^{\tau-s})^2}   \int^{e^\tau\cosh s}_{1-\delta_1}  \int^{e^{\tau+s}-t}_{t-e^{\tau-s}} \rho^3(|\bu|\bu)(t,\rho) \label{Duhamel+3}d\rho dt,
\end{align}
and
\begin{align}
 \big(\pa_{\tau} - \pa_{s} \big)\eqref{Duhamel}\, =\, \,
  & e^{\tau-s}  \int^{e^\tau\cosh s}_{1-\delta_1} (t-e^{\tau-s})^2 |\bu|\bu(t,t-e^{\tau-s}) dt\label{Duhamel-1}\\
  & - \frac{e^{\tau-s}}{(e^{\tau+s}-e^{\tau-s})^2}   \int^{e^\tau\cosh s}_{1-\delta_1}  \int^{e^{\tau+s}-t}_{t-e^{\tau-s}} \rho(e^{\tau+s}-t)^2|\bu|\bu(t,\rho) d\rho dt\label{Duhamel-2} \\
  & +\frac{e^{\tau-s}}{(e^{\tau+s}-e^{\tau-s})^2}   \int^{e^\tau\cosh s}_{1-\delta_1}  \int^{e^{\tau+s}-t}_{t-e^{\tau-s}} \rho^3|\bu|\bu(t,\rho) \label{Duhamel-3}d\rho dt.
\end{align}

By the definition of $\bu$, the inequality \eqref{u-out-cone} and H\"older, the contribution of \eqref{Duhamel+1} can be estimated as
\begin{align}
&\ \int_{s_0}^\infty \left| e^{\tau+s}\int^{e^\tau\cosh s}_{1-\delta_1}   (e^{\tau+s}-t)^2 |\bu|^2(t,e^{\tau+s}-t)  dt \right|^2 ds \nonumber \\
\les &\  \int_{s_0}^\infty   \int^{e^\tau\cosh s}_{1-\delta_1}  |\bu|^{4}(t,e^{\tau+s}-t)  e^{7s}   dtds  \nonumber\\
\les &\  \int_{\frac14{e^{s_0}} }^\infty   \int^\rho_{1-\delta_1}  |\bu|^{4}(t,\rho)  \rho^6   dtd\rho  \nonumber \\
\les &\  \int_0^\infty  \int_{\rho>t+\frac12}  |u|^{4}(t,\rho)  \rho^6   d\rho  dt \nonumber \\
\les &\  \norm{u}^3_{L^3(\{|x|>|t|+\frac12\})}  \sup_{t\geq 0}\norm{|x|^2 u(t,x)}_{L^\infty_x  (\{|x|>t+\frac12\} ) }   \les 1.
\end{align}

Now, we consider \eqref{Duhamel+2} and \eqref{Duhamel+3}.
By H\"older, a change of  variables,  
and  the inequality \eqref{u-out-cone}, we  have
\begin{align}
&\, \int_{s_0}^\infty \left| \int^{e^\tau\cosh s}_{1-\delta_1}  e^{-s}  \int^{e^{\tau+s}-t}_{t-e^{\tau-s}} ( t^2 \rho+ \rho^3 )|\bu|^2(t,\rho)  d\rho dt\right|^2 ds \nonumber\\
\les &\,  \int_{s_0}^\infty \left| \int^{e^\tau\cosh s}_{1-\delta_1}   \mathcal{M} \big({\mathbb I}_{[t-\frac12+\delta_1,\infty)}(\rho) \rho^3|\bu|^2(t,\rho)\big)(e^\tau\sinh s) dt  \right|^2 ds \nonumber\\
\les &\,  \int_{s_0}^\infty \int^{e^\tau\cosh s}_{1-\delta_1} [ \mathcal{M} \big({\mathbb I}_ {[t-\frac12+\delta_1,\infty)}(\rho) \rho^3|\bu|^2(t,\rho)\big)(e^\tau\sinh s)] ^2 e^\tau \cosh s dt ds \nonumber\\
\les  &\   \int^{\infty}_{1-\delta_1} \int_{e^{s_0}}^\infty
[ \mathcal{M} \big({\mathbb I}_{[t-\frac12+\delta_1,\infty)}(\rho) \rho^3|\bu|^2(t,\rho)\big)(r)] ^2 drdt\nonumber \\
\les &\, \int^{\infty}_{1-\delta_1} \int_{r>t-\frac12+\delta_1} r^6 |\bu|^4(t,r)    drdt  \nonumber \\
\les &\,  \int_0^\infty  \int_{\rho>t+\frac12}  |u|^{4}(t,\rho)  \rho^6   d\rho  dt \nonumber\\
\les &\,  \norm{u}^3_{L^3(\{|x|>|t|+\frac12\})}  \sup_{t\geq 0}\norm{|x|^2 u(t,x)}_{L^\infty_x  (\{|x|>t+\frac12\} ) } \les 1.
\end{align}
Thus, the contribution of \eqref{Duhamel+2} and \eqref{Duhamel+3} to \eqref{u-energy-part1} is finite.
\vskip 0.3cm

For \eqref{Duhamel-1}, by the fact that $e^{\tau-s_0} <\frac12-\delta_1$, the definition of $\bu$, and  the inequality \eqref{u-out-cone}, we have
\begin{align}
 &\,   \int_{s_0}^\infty e^{-2s} \left|\int_{1-\delta_1}^{e^\tau\cosh s} (t-e^{\tau-s})^2|\bu|^2(t,t-e^{\tau-s}) dt\right|^2 ds \nonumber\\
 \les& \, \int_{s_0}^\infty e^{-2s} \Big|\int_{1-\delta_1}^{e^\tau\cosh s} t^{-2} dt  \Big|^2 \les\,   
 1.
\end{align}
Similarly, for \eqref{Duhamel-2} and \eqref{Duhamel-3}, by the fact that  $e^{\tau-s_0} <\frac12-\delta_1$ 
 and  the inequality \eqref{u-out-cone}, we can obtain that

\beq
\label{u^2-tau-s}
\begin{split}
 & \int_{s_0}^\infty e ^{-6s} \left|\int_{1-\delta_1}^{e^\tau\cosh s}  \int^{e^{\tau+s}-t}_{t-e^{\tau-s}}    [\rho^3 + \rho(e^{\tau+s}-t)^2 ]   |\bu|^2(t,\rho)  d\rho dt      \right|^2 ds  \\
 \les & \int_{s_0}^\infty   e ^{-2s}\left|\int_{1-\delta_1}^{e^\tau\cosh s}  \int^{e^{\tau+s}-t}_{t-e^{\tau-s}}   \rho   |\bu|^2(t,\rho)  d\rho dt      \right|^2 ds\\
  \les & \int_{s_0}^\infty    e ^{-2s} \left|\int_{1-\delta_1}^{e^\tau\cosh s}   \int^{e^{\tau+s}-t}_{t-e^{\tau-s}}  \rho^{-3}  d\rho dt      \right|^2 ds\\
    \les & \int_{s_0}^\infty   e ^{-2s}\left|\int_{1-\delta_1}^{e^\tau\cosh s}   t^{-2} dt      \right|^2 ds\les 1 .
\end{split}
\eeq
Hence, combining \eqref{u-0-1-tau+s}--\eqref{u^2-tau-s}, we have

\beq\label{large-s<inf}
\int_{s_0}^\infty \frac12|(s^2\tw)_\tau|^2 +\frac12 |(s^2\tw)_s|^2 \  ds\les 1.
\eeq

\subsubsection{\textbf{The hyperbolic energy for  $0\leq s\leq s_0$}}

By the hyperbolic transform \eqref{hy-trans-w}, we have
\beq
\begin{split}
  (s^2\tw)_\tau(\tau,s)\,=&\,2e^{2\tau}\sinh^2s\
                            \bw(e^\tau \cosh s,e^\tau\sinh s)+e^{3\tau} \sinh^2 s \cosh s \ \bw_t(e^\tau \cosh s,e^\tau\sinh s)\\
                          &+\, e^{3\tau} \sinh^3 s \  \bw_r(e^\tau \cosh s,e^\tau\sinh s),
\end{split}
\eeq
\beq\begin{split}
(s^2\tw)_s(\tau,s)\,=&\,2e^{2\tau}\sinh s \cosh s\  \bw (e^\tau \cosh s,e^\tau\sinh s)+\,e^{3\tau} \sinh^3 s\ \bw _t (e^\tau \cosh s,e^\tau\sinh s)\\
                     &+\, e^{3\tau} \sinh^2 s\  \cosh s \bw _r(e^\tau \cosh s,e^\tau\sinh s).
\end{split}\eeq
Hence
\begin{align}
 &  \int_0^{1}  \int_0^{s_0} |(s^2\tw)_\tau|^2 +|(s^2\tw)_s|^2  ds d\tau \nonumber  \\ 
   \les\,\, & \int_0^{1}  \int_0^{s_0} e^{4\tau} \sinh^2s (\sinh^2 s+\cosh^2 s)|\bw|^2(e^\tau\cosh s, e^\tau\sinh s)  ds d\tau \label{small-s-1}   \\
        & +  \int_0^{1}  \int_0^{s_0} e^{6\tau} \sinh^4s(\sinh^2s+ \cosh^2 s)  [ \bw_t ^2 +  \bw_r^2] (e^\tau\cosh s, e^\tau\sinh s) ds d\tau \label{small-s-2}.
\end{align}
Taking $C_0=e^{1+s_0}$, by a change of variables, the Hardy inequality and the inequality \eqref{w-energy-2}, we obtain
\beq
\begin{split}
   \eqref{small-s-1} \,\les \, \iint_{|x|+|t|\leq C_0 }\frac1{|x|^2} |\bw|^2(t,x) dx dt\, \les\, \sup_{0<t<C_0} \norm{\nabla_x \bw}_{L^2_x(\R^5)}\,\les\, 1.
\end{split}
\eeq
Similarly, for \eqref{small-s-2}, changing of variables, we have
\beq
\begin{split}
   \eqref{small-s-2} \les \iint_{|x|+|t|\leq C_0 } |\nabla_{t,x} \bw|^2(t,x) dx dt\, \les\,
   \sup_{0<t<C_0} \norm{\nabla_{t,x} \bw}_{L^2_x(\R^5)}\,\les\, 1.
\end{split}
\eeq
Then, by the mean value theorem,  there exists $\tau_0\in[0,1]$, such that

\beq
  \int_0^{s_0} |(s^2\tw)_\tau|^2(\tau_0,s) +|(s^2\tw)_s|^2(\tau_0,s)  ds  \,\les\, 1.
\eeq
This estimate along with \eqref{large-s<inf} implies
\beq
  \int_0^\infty |(s^2\tw)_\tau|^2(\tau_0,s) +|(s^2\tw)_s|^2(\tau_0,s)  ds \,\les \,1.
\eeq

\subsection{Uniform boundedness of the hyperbolic energy of $\tw $}

We are now going to show that  $E_h (\tw)(\tau)$ is uniformly
bounded for $\tau\in\R_+$. 

A simple calculation gives
\beq
  \frac{d}{d\tau} E_h(\tw(\tau))\, =\, \int\frac{|s^2\tw|s^2\tw-|s^2\tu|s^2\tu }{\sinh^2s} s^2\tw_\tau ds.
\eeq
Utilizing  the decay property \eqref{v-dispersive} of $ v$, we have, for $\tau,s\geq 0$,
\beq\label{tv-1}
(e^\tau\cosh s-(1-\delta_1))^2 v(e^\tau\cosh s-(1-\delta_1),e^\tau \sinh s)\, \les\, \delta_1^{-\frac12}.
\eeq
The Huygens principle implies that  $v(e^\tau\cosh s-(1-\delta_1),e^\tau \sinh s)=0$ unless $1-\frac{6}{5}\delta_1\leq e^{\tau-s}\leq 1-\frac{4}{5}\delta_1.$
Thus, for $\tau,s\geq0$, we have
\beq\label{tv-2}
\frac{s^2|\tv(\tau,s)|}{\sinh^2 s}\,=\,e^{2\tau} |v(e^\tau\cosh s-(1-\delta_1),e^\tau \sinh s)|\,\les\,
\frac{e^{2\tau} {\mathbb I}_{\{s\geq 0:\ e^{\tau-s}\leq 1 -\frac45\delta_1\} } }{(e^\tau\cosh s-(1-\delta_1))^2}\,\les \,e^{-\tau}.
\eeq
Hence, by H\"older and interpolation, we have
\beq\label{E(w)'-1}
\begin{split}
\int_0^\infty \frac{1}{\sinh^2 s}   |s^2\tw_\tau| |s^2 \tv|^2 ds   &\les \, \left(\int_0^\infty |s^2\tw_\tau|^2 ds \right)^\frac12  \left(\int_0^\infty \frac{1}{\sinh^2s}|s^2\tv|^3 ds \right)^\frac12  \norm{\frac{s^2}{\sinh^2s} \tv(\tau,s)}_{L^\infty_s}^\frac12\\
  &\les\,  e^{-\tau/2} E_h(\tw(\tau))^\frac12  \left(\int_0^\infty \frac{1}{\sinh^2s}|s^2\tv|^3 ds \right)^\frac12,
\end{split}
\eeq

\beq\label{E(w)'-2}
\begin{split}
\int_0^\infty \frac{1}{\sinh^2 s}  |s^2\tw_\tau| |s^2 \tv| |s^2 \tw| ds &\,\les\, \left(\int_0^\infty |s^2\tw_\tau|^2 ds \right)^\frac12  \left(\int_0^\infty \frac{1}{\sinh^2s}| s^2 \tw|^3 ds \right)^\frac13
 \left(\int_0^\infty |s^2\tv(\tau,s)|^6 \frac1{ \sinh^8 s} ds\right)^\frac16\\
 &\,\les\,  e^{-\tau/2} E_h(\tw(\tau))^{ \frac56} \left(\int_0^\infty \frac{1}{\sinh^2s}|s^2\tv|^3 ds \right)^\frac16.
\end{split}
\eeq
Combining \eqref{E(w)'-1} with \eqref{E(w)'-2} and employing H\"older again,  we have
\beq\label{E(w)'-3}
\frac{d}{d\tau} E_h(\tw(\tau)\,\les\, e^{-\tau/2} \left[ E_h(\tw(\tau) + \int_0^\infty \frac{1}{\sinh^2s}|s^2\tv|^3 ds \right].
\eeq

On the other hand, changing of variables,  we have
\beq\label{tv-L3}
\int_0^\infty\int_0^\infty \frac{1}{\sinh^2s}|s^2\tv|^3 dsd\tau \leq \int_{\delta_1}^\infty \int_0^\infty |v(t,r)|^3r^4 drdt\,\leq\,\norm{v}^3_{L^3_{t,x}([\delta_1,\infty)\times\R^5)}\,\les\, 1.
\eeq
Hence, by Gronwall's inequality, \eqref{E(w)'-3} and \eqref{tv-L3} yield that  $E_h(\tw)(\tau)$ is uniformly bounded in $\R_+.$

\subsection{Concluding of the proof of Proposition \ref{u-3-bounded}} 
We complete the proof of Proposition \ref{u-3-bounded}
by studying the Morawetz action in hyperbolic coordinates. 

\begin{proposition}\label{morawetz}
Let $\tw$ be defined  in \eqref{hy-trans-w}, then
\beq\label{w-L3-cone}
\iint_{\Omega_2 } |w(t,r)|^3 r^4 dtdr\, = \,\int_0^\infty \int_0^\infty  \frac{|s^2 \tw|^3}{ \sinh^2 s}  ds d\tau\les 1.
\eeq
\end{proposition}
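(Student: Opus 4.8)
Since $\bw(t,r)=w(t-(1-\delta_1),r)$ and \eqref{hy-trans-w} give $s^2\tw=e^{2\tau}\sinh^2s\,\bw(e^\tau\cosh s,e^\tau\sinh s)$, the change of variables $(t,r)=(e^\tau\cosh s-(1-\delta_1),\,e^\tau\sinh s)$ — under which $\Omega_2=\{\tau\ge0,\ s\ge0\}$ and $dt\,dr=e^{2\tau}\,d\tau\,ds$ — yields $r^4|w|^3=e^{-2\tau}\sinh^{-2}s\,|s^2\tw|^3$, which is the equality asserted in \eqref{w-L3-cone}. So it suffices to bound $\int_0^\infty\!\int_0^\infty\sinh^{-2}s\,|s^2\tw|^3\,ds\,d\tau$, which by Subsection \ref{reduction} is precisely what remains for Proposition \ref{u-3-bounded}. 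Throughout I will use the uniform bound $\sup_{\tau\ge0}E_h(\tw)(\tau)\les1$ obtained in the previous subsection. The plan is to run a one–dimensional Morawetz estimate for $z:=s^2\tw$ with the multiplier $\pa_s z$, treating the $v$–part of the nonlinearity perturbatively.

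Put $g:=|s^2\tu|(s^2\tu)-|s^2\tw|(s^2\tw)$, so that $|g|\les|s^2\tw|\,|s^2\tv|+|s^2\tv|^2$, and rewrite \eqref{tw-wave} as
\beq
\pa_{\tau\tau}z-\pa_{ss}z+\frac{2z+|z|z}{\sinh^2 s}=-\frac{g}{\sinh^2 s},\qquad z|_{s=0}=0 .
\eeq
Multiplying by $\pa_s z$ and integrating in $s$ over $(0,\infty)$ — the $s=0$ boundary terms vanish because the $s^2$ prefactor makes $z,\pa_s z,\pa_\tau z$ and $\sinh^{-2}s\,z^2$ vanish there, and the $s\to\infty$ contributions vanish by a standard limiting argument using the finiteness of $E_h(\tw)$ — and using $\pa_s(\sinh^{-2}s)=-2\cosh s\,\sinh^{-3}s<0$ together with $|z|z\,\pa_s z=\pa_s(\tfrac13|z|^3)$, one obtains the Morawetz identity
\beq
\frac{d}{d\tau}\int_0^\infty\pa_\tau z\,\pa_s z\,ds+\int_0^\infty\frac{2\cosh s}{\sinh^3 s}\Bigl(z^2+\tfrac13|z|^3\Bigr)\,ds=-\int_0^\infty\frac{g}{\sinh^2 s}\,\pa_s z\,ds .
\eeq
Both terms on the left carry the good sign — this is the manifestation of the fact, noted in the introduction, that $2w$ and $r^2|w|w$ have the same sign — and since $\coth s>1$ the cubic term is $\ge\tfrac23\int_0^\infty\sinh^{-2}s\,|s^2\tw|^3\,ds$.

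Integrating over $\tau\in[0,T]$ and bounding the $\tau$–boundary terms by $2\sup_\tau\bigl|\int_0^\infty\pa_\tau z\,\pa_s z\,ds\bigr|\le2\sup_\tau E_h(\tw)(\tau)\les1$ (Cauchy–Schwarz), one gets
\beq
\mathcal M_T:=\int_0^T\!\int_0^\infty\frac{|s^2\tw|^3}{\sinh^2 s}\,ds\,d\tau\ \les\ 1+\int_0^T\Bigl|\int_0^\infty\frac{g}{\sinh^2 s}\,\pa_s z\,ds\Bigr|\,d\tau .
\eeq
Crucially $\mathcal M_T\le 3\int_0^T E_h(\tw)(\tau)\,d\tau\les T<\infty$ for every finite $T$, so it remains only to absorb the error term, and then let $T\to\infty$. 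By Cauchy–Schwarz in $s$ and $\int_0^\infty|\pa_s z|^2\,ds\le2E_h(\tw)(\tau)\les1$,
\beq
\Bigl|\int_0^\infty\frac{g}{\sinh^2 s}\,\pa_s z\,ds\Bigr|\ \les\ \Bigl(\int_0^\infty\frac{z^2\,(s^2\tv)^2}{\sinh^4 s}\,ds\Bigr)^{1/2}+\Bigl(\int_0^\infty\frac{(s^2\tv)^4}{\sinh^4 s}\,ds\Bigr)^{1/2}.
\eeq

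The hard part will be absorbing these two pieces. For the second, write $\frac{(s^2\tv)^4}{\sinh^4 s}=\frac{|s^2\tv|^3}{\sinh^2 s}\cdot\frac{|s^2\tv|}{\sinh^2 s}$ and apply \eqref{tv-2}, so the second piece is $\les e^{-\tau/2}\bigl(\int_0^\infty\sinh^{-2}s\,|s^2\tv|^3\,ds\bigr)^{1/2}$, whose $\tau$–integral is $\les1$ by Cauchy–Schwarz in $\tau$ and \eqref{tv-L3}. For the first piece one uses that, by finite speed of propagation and the Huygens localization employed around \eqref{tv-2}, $\tv(\tau,\cdot)$ is supported in a shell $s\in[\tau+c_1,\tau+c_2]$ of bounded length ($0<c_1<c_2$ depending on $\delta_1$), on which $\sinh s\les e^{\tau}$; combining \eqref{tv-2} with Hölder in $s$ on this shell gives
\beq
\Bigl(\int_0^\infty\frac{z^2\,(s^2\tv)^2}{\sinh^4 s}\,ds\Bigr)^{1/2}\les e^{-\tau}\Bigl(\int_{s\in[\tau+c_1,\tau+c_2]}z^2\,ds\Bigr)^{1/2}\les e^{-\tau/3}\Bigl(\int_{s\in[\tau+c_1,\tau+c_2]}\frac{|s^2\tw|^3}{\sinh^2 s}\,ds\Bigr)^{1/3},
\eeq
and then Hölder in $\tau$ bounds its $\tau$–integral by $\les\mathcal M_T^{1/3}$, which is absorbed into $\tfrac12\mathcal M_T$ by Young's inequality (this is where $\mathcal M_T<\infty$ is used). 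Hence $\mathcal M_T\les1$ uniformly in $T$, and letting $T\to\infty$ gives $\int_0^\infty\!\int_0^\infty\sinh^{-2}s\,|s^2\tw|^3\,ds\,d\tau\les1$, i.e.\ \eqref{w-L3-cone} and Proposition \ref{morawetz}; together with Subsection \ref{reduction} this completes the proof of Proposition \ref{u-3-bounded}. I expect the only real subtlety to be the bookkeeping in this last step — checking that each piece of $g$ produces either a genuine exponential $\tau$–gain (coming from the dispersive decay \eqref{v-dispersive} through \eqref{tv-2}) or a strictly sublinear power of $\mathcal M_T$.
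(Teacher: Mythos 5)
Your proof is correct and runs along essentially the same lines as the paper's: you compute the Morawetz action $M(\tau)=\int_0^\infty (s^2\tw)_\tau (s^2\tw)_s\,ds$, exploit that both the linear term $2z/\sinh^2 s$ and the cubic term $|z|z/\sinh^2 s$ contribute a positive definite coefficient $\coth s$ after integration by parts in $s$, bound $|M(\tau)|$ by the uniformly bounded hyperbolic energy, and control the remaining $\tv$-dependent error using the dispersive decay \eqref{tv-2} together with \eqref{tv-L3}. The only (minor) divergence is in the bookkeeping of the error term: the paper applies a three-factor H\"older to $\int \sinh^{-2}s\,|s^2\tw_s||s^2\tv||s^2\tw|\,ds$ and then controls the $|s^2\tw|^3/\sinh^2 s$ factor directly by $E_h(\tw)^{1/3}\les1$, so no absorption into the left-hand side is needed, whereas you use a two-factor Cauchy--Schwarz, the shell localization of $\tv$, and then absorb $\mathcal M_T^{1/3}$ via Young (which in fact you could have avoided by the same direct bound $\bigl(\int\sinh^{-2}s\,|s^2\tw|^3\,ds\bigr)^{1/3}\les E_h(\tw)^{1/3}\les1$).
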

\begin{proof}

Define the Morawetz action by
\beq
M(\tau)\,=\,\int_0^\infty (s^2\tw)_\tau (s^2\tw)_s ds\,
=\,\int_0^\infty  \tw_\tau(\tw_s +\frac2s \tw) s^4 ds.
\eeq
One can easily find that $|M(\tau)|\leq E_h(\tw)(\tau)$.
By \eqref{tw-wave}, we have
\beq\label{M'(tau)}
\begin{split}
  \frac{d}{d\tau} M(\tau) =  &  -\int_0^\infty  \frac{2s^2\tw}{\sinh^2 s } (s^2\tw)_s\   ds  -\int_0^\infty  \frac{s^4}{\sinh^2 s} |\tu|\tu s^2\tw_s\  ds    \\
                         = & -2\int_0^\infty  \frac{|s^2 \tw|^2}{\sinh^2s} \frac{\cosh s}{ \sinh s} ds - \frac23 \int_0^\infty  \frac{|s^2 \tw|^3}{\sinh^2s} \frac{\cosh s}{ \sinh s} ds \\
                         & +\int_0^\infty  \frac{s^4}{\sinh^2 s } (|\tw|\tw- |\tu|\tu )s^2\tw_s\  ds.
\end{split}
\eeq
By H\"older, the estimate \eqref{tv-2}, and the fact that $E_h(\tw(\tau))$ is uniformly bounded for $\tau\geq 0$,  we have
\beq\begin{split}
     & \left|\int_0^\infty \int_0^\infty \frac{s^4}{\sinh^2 s } (|\tw|\tw- |\tu|\tu )s^2\tw_s\  ds d\tau\right|\\
\les&\, \int_0^\infty \int_0^\infty \frac{1}{\sinh^2 s}  |s^2\tw_\tau| |s^2 \tv|^2 ds d\tau+ \int_0^\infty\int_0^\infty \frac{1}{\sinh^2 s}  |s^2\tw_\tau| |s^2 \tv| |s^2 \tw| ds d\tau\\
\les&\,  \int_0^\infty e^{-\tau} E_h(\tw(\tau))^\frac12  (\int_0^\infty \frac{1}{\sinh^2s}|s^2\tv|^3 ds )^\frac12 \tau \\
    &+\, \int_0^\infty e^{-\tau} E_h(\tw(\tau))^{ \frac56} (\int_0^\infty \frac{1}{\sinh^2s}|s^2\tv|^3 ds )^\frac16 d\tau\\
\les&\,   \norm{v}^3_{L^3_{t,x}(\R\times\R^5)} \les 1.
\end{split}
\eeq
This  together with the equality \eqref{M'(tau)} and the fact $M(\tau)$ is uniformly bounded for $\tau\geq 0$, implies that
\beq
\int_0^\infty \int_0^\infty  \frac{|s^2 \tw|^3}{\sinh^2s} \frac{\cosh s}{ \sinh s} ds  d\tau \les 1.
\eeq
Thus, 
we have
\beq
\int_0^\infty \int_0^\infty  \frac{|s^2 \tw|^3}{ \sinh^2 s}  ds d\tau\les 1.
\eeq
This yields  \eqref{w-L3-cone} by the definition of $\tw$. 
\end{proof}


\textbf{Acknowledgments:}  This work is supported in part by the National Natural Science Foundation of China
 under grant No.11671047.


\end{document}